\theoremstyle{plain}
\newtheorem{lemma}{Lemma}[section]
\newtheorem{theorem}{Theorem}[section]
\newtheorem{corollary}{Corollary}[section]
\newtheorem{proposition}{Proposition}[section]
\newtheorem{definition}{Definition}[section]
\newtheorem*{theorem*}{Theorem}
\newtheorem{thmA}{Theorem}
\newtheorem*{summary of results}{Summary of results}
\theoremstyle{definition}
\newtheorem{remark}{Remark}[section]
\begin{document}

\author{Karl-Olof Lindahl\\
School of Mathematics and Systems Engineering\\
V\"{a}xj\"{o} University, 351 95, V\"{a}xj\"{o}, Sweden\\
\texttt{Karl-Olof.Lindahl@vxu.se}}

\title{Estimates of linearization discs in $p$-adic dynamics
with application to ergodicity\footnote{Revised version of preprint 04098 MSI, 2004,
V\"{a}xj\"{o} University, Sweden and part of the thesis \cite{Lindahl:2007}}}

\date{\empty}
\maketitle


\begin{abstract}
We give lower  bounds for the size of linearization discs for
power series over $\mathbb{C}_p$. For quadratic maps, and certain
power series containing a `sufficiently large' quadratic term, we
find the exact linearization disc. 
For finite extensions of $\mathbb{Q}_p$,
we give a sufficient condition on the multiplier under which the
corresponding linearization disc is maximal (i.e.\@ its radius coincides with 
that of the maximal disc in $\mathbb{C}_p$ on which $f$ is one-to-one).
In particular, in unramified extensions of $\mathbb{Q}_p$, the
linearization disc is maximal if the multiplier map has a maximal cycle on the unit sphere.      
Estimates of linearization discs in the remaining types of non-Archimedean fields of dimension one were obtained in 
\cite{Lindahl:2004,Lindahl:2009,Lindahl:2009eq}.

Moreover, it is shown that, for any
complete non-Archimedean field, transitivity is preserved under
analytic conjugation. Using results by Oxtoby
\cite{Oxtoby:1952}, we prove that transitivity,
and hence minimality, is equivalent the  unique ergodicity on
compact subsets of a linearization disc.   In particular, a power series $f$ over $\mathbb{Q}_p$
is minimal, hence uniquely ergodic, on all spheres inside a linearization disc
about a fixed point if and only if the multiplier is maximal. We also note that in finite extensions of $\mathbb{Q}_p$, as well as in any other non-Archimedean field $K$ that is not isomorphic to $\mathbb{Q}_p$ for some prime $p$, a power series cannot be ergodic on an entire sphere, that is contained in a linearization disc, and centered about the corresponding fixed point. 
\end{abstract}

\vspace{1.5ex}\noindent {\bf Mathematics Subject Classification
(2000):} 32P05, 32H50, 37F50, 37B05, 37A50

\vspace{1.5ex}\noindent {\bf Key words:} dynamical system,
conjugation, linearization, $p$-adic numbers, non-Archimedean field

\section{Introduction}

In this paper we study iteration of power series $f$ defined over
$\mathbb{C}_p$, the completion of the algebraic closure of the
$p$-adic numbers $\mathbb{Q}_p$. As in complex dynamics (i.e.\@
iteration of complex-valued analytic functions,
see e.g.\@ \cite{Carleson/Gamelin:1991,Milnor:2000,Beardon:1991}), the main features of the
dynamics under $f\in \mathbb{C}_p[[x]]$ is determined by the
character of the periodic points of $f$, i.e.\@ the modulus of the
multiplier at the periodic points. A periodic fixed point point
$x_0$ may be either \emph{attracting}, \emph{indifferent} or
\emph{repelling} depending on whether the multiplier
$\lambda=f'(x_0)$ is inside, on or outside the unit sphere. In
this paper we consider non-resonant (i.e.\@ $\lambda $ not a root of
unity) indifferent fixed points.

A power series over a complete valued field of the form
\[
f(x)=\lambda (x-x_0) + \text{(higher order terms)}
\]
is said to be linearizable at the fixed point $x_0$ if there
exists a
convergent power series solution $g$ to the following form of the
Schr\"{o}der functional equation (SFE)
\begin{equation}\label{schroder functional equation}
g\circ f(x)=\lambda g(x), \quad \lambda =f'(x_0),
\end{equation}
which conjugates $f$ to its linear part in some neighborhood of
$x_0$.
By the non-Archimedean Siegel theorem of Herman and Yoccoz
\cite{Herman/Yoccoz:1981},
as in the complex field case \cite{Siegel:1942}, the condition
\begin{equation}\label{Siegel condition}
|1- \lambda^n|\geq Cn^{-\beta} \quad\text{for some real numbers
$C,\beta >0$},
\end{equation}
on $\lambda$ is sufficient for convergence also in the non-Archimedean field
case. Their theorem applies to the multi-dimensional case.
In dimension one, the condition (\ref{Siegel condition}) is always satisfied for non-resonant multipliers in fields of characteristic zero, i.e. the $p$-adic case studied in this paper, and the equal characteristic case of studied in \cite{Lindahl:2009eq}.
This is not always true in fields of prime characteristic
as shown in \cite{Lindahl:2004,Lindahl:2009}.

As shown by Herman and Yoccoz, in the two-dimensional $p$-adic case there 
also exist examples where the Siegel condition is not satisfied and the corresponding conjugacy diverges. The multi-dimensional $p$-adic case has been taken further by Viegue in his thesis \cite{Viegue:2007}. In this paper we only consider the one-dimensional non-resonant $p$-adic case so the conjugacy always converges.

The conjugacy function $g$ is unique if we specify the image and
derivative at $x_0$. It is custom to assume that $g(x_0)=0$ and
$g'(x_0)=1$. By the local invertibility theorem, 
$g$ has a local inverse $g^{-1}$ at $x_0$.
We will refer to the \emph{(indifferent) linearization disc} of $f$ about $x_0$,
denoted by $\Delta _f(x_0)$, as the largest disc $U\subset
\mathbb{C}_p$, with $x_0\in U$, such that (\ref{schroder
functional equation}) holds for all $x\in U$, and $g$ converges
and is one-to-one on $U$. The possibly larger disc, on which the
the semi-conjugacy (\ref{schroder functional equation}) holds, will be referred to as 
the \emph{semi-disc}.

Note that, by definition, $f$ must be one-to-one on the linearization disc 
$\Delta _f(x_0)$. Moreover, the full conjugacy
\begin{equation}\label{full conjugacy}
g\circ f\circ g^{-1}(x)=\lambda x,
\end{equation}
is valid for all $x\in g(\Delta _f(x_0))$.
%
%
Let $f^{\circ n}$ denote the $n$-fold composition of $f$ with
itself. On $g(\Delta _f(x_0))$ we have $g\circ f^{\circ n}\circ
g^{-1}(x)=\lambda ^n x$. Hence, there is a one-to-one
correspondence between orbits under
$f$ and the multiplier map
$T_{\lambda}: x\mapsto \lambda x$, on $\Delta_f(x_0)$ and
$g(\Delta_f(x_0))$, respectively. In particular, since $\lambda $
is not a root of unity, $f$ can have no periodic points on the
linearization disc, except the fixed point $x_0$. However,
the semi-disc may contain other periodic points as well, as manifest in 
the papers \cite{Arrowsmith/Vivaldi:1994,Pettigrew/Roberts/Vivaldi:2001}.  In fact,  the semi-disc is contained in the \emph{quasi-periodicity domain} of $f$, defined as the interior of the set of points on the projective line 
$\mathbb{P}(\mathbb{C}_p)=\mathbb{C}_p\cup \{\infty\}$ that are recurrent by $f$. In the case that
$f$ is a rational function, Rivera-Letelier
 \cite{Rivera-Letelier:2000} gave several characterizations of the quasi-periodicity domain of $f$
 and described its local and global dynamics.  In particular, he proved that analytic components 
of the domain of quasi-periodicity, which are $p$-adic analogues of Siegel discs and Herman rings in complex dynamics, are open affinoids (that is, they have simple geometry), and contains infinitely many indifferent periodic points. 

Our aim in this paper is three-fold. First, we obtain lower
(sometimes optimal) bounds for the size of linearization discs for
$f\in\mathbb{C}_p[[x]]$. These estimates extend results on
quadratic polynomials over $\mathbb{Q}_p$ by Ben-Menahem \cite{Ben-Menahem:1988},
and Thiran, Verstegen, and Weyers \cite{Thiran/EtAL:1989}, and for certain polynomials
with maximal multipliers over the $p$-adic integers $\mathbb{Z}_p$
by Pettigrew, Roberts and Vivaldi \cite{Pettigrew/Roberts/Vivaldi:2001}, as well as results on small
divisors in $\mathbb{C}_p$ by Khrennikov \cite{Khrennikov:2001a}. 

Second, we
prove that transitivity (the existence of a dense orbit) is
preserved under analytic conjugation into linearization discs over an
arbitrary complete non-Archimedean field. Using results by Oxtoby
\cite{Oxtoby:1952}, the transitivity of $f$ on
compact subsets of a linearization disc is proven to be equivalent to the
ergodicity and unique ergodicity of $f$.

Third, when the dynamics is defined over the $p$-adic numbers
$\mathbb{Q}_p$, we give necessary and sufficient conditions on the
multiplier, that $f$ is transitive, hence uniquely ergodic,  on
spheres inside the linearization disc. These results generalize results
obtained by Bryk and Silva \cite{Bryk/Silva:2003}, and by Gundlach, Khrennikov, and Lindahl 
\cite{Gundlach/Khrennikov/Lindahl:2001:a}, for
monomials $f:x\mapsto cx^n$, and for 1-Lipschitz power series by Anashin \cite{Anashin:2006}.
On the other hand, we also prove
that transitivity is not possible on a whole sphere in any proper 
extension of $\mathbb{Q}_p$. Results on the transitivity and ergodic breakdown of the $p$-adic
multiplier map $x\mapsto \lambda x$ were obtained by Oselies and Zieschang \cite{Oselies/Zieschang:1975}, and by Coelho and Parry \cite{Coelho/Parry:2001}.

A classification of measure-preserving transformations of compact-open subsets of non-Archimedean local fields were obtained recently by Kingsbery, Levin, Preygel and Silva
\cite{KingsberyLevinPreygelSilva:2009}. They show that if a $C\sp 1$ transformation $T$ is
measure-preserving when restricted to a compact-open set $X$ then $X$ can be written
as a disjoint union of invariant compact-open sets such that $T$ restricted to each such set is either a local isometry or topologically and measurably conjugate to an ergodic  Markov transformation. Concerning polynomials, the question of ergodicity is also answered,
except in the case where the polynimial is $1$-Lipschitz, as in the present paper  (the power series $f$ is certainly $1$-Lipschitz on the entire linearization disc). Non-1-Lipschitz functions were also studied in \cite{AnashinKhrennikov:2009}.

Let us also mention some related works on measure preserving transformations on the Berkovich space, which is a much larger space than the $p$-adics.
The Berkovich space provides a bridge between non-archimedean and complex dynamics.  The  works \cite{RumelyBaker:2004,FavreRivera-Letelier:2004,FavreRivera-Letelier:2006}, 
construct a natural invariant measure for a wide class of rational functions, similar to existing constructions in complex dynamics. 


Further results on the properties of the dynamics on $p$-adic
linearization discs are provided in
\cite{Arrowsmith/Vivaldi:1994,Pettigrew/Roberts/Vivaldi:2001}. 
Estimates for linearization discs in prime characteristic were obtained in
\cite{Lindahl:2004,Lindahl:2009}, and for fields of charactersitic zero in the equal charactersitic case \cite{Lindahl:2009eq}. See \cite{Lindahl:2004}, for
further comments on the non-Archimedean problem of linearization
and its relation to the complex field case.

The construction of cojugacies in $p$-adic
dynamics is related to standard and well-established techniques of
local arithmetic geometry, see e.g.\@ Lubin \cite{Lubin:1994} and  the
construction of local canonical heights by Call and Silverman \cite{CallSilverman:1993},  and Hsia
\cite{Hsia:1996}. For indifferent,
non-resonant, fixed points the conjugacy function is related to
the `logarithm' of the theory of one-parameter formal Lie-groups
defined over the $p$-adics
\cite{Arrowsmith/Vivaldi:1994,Lubin:1994}. As in
\cite{Lubin:1994}, the Lie-logarithm is constructed as the limit
\begin{equation}\label{Lie-logarithm}
\lim_{n\to\infty}\frac{f^{\circ p^n}-id_x}{p^n},
\end{equation}
and is, up to a constant, the quotient between the conjugacy
function $g$ and its derivative $g'$.
The Lie-logarithm contains useful information about the dynamics
of $f$. In particular, its roots are periodic points of $f$
\cite{Lubin:1994}. See Li
\cite{Li:1996a,Li:1996c} for various results on
this matter, including the counting of periodic points of $p$-adic
power series. Rivera-Letelier \cite{Rivera-Letelier:2000} 
proved if $f$ is a rational function, then the Lie-logarithm converges
uniformly on the entire domain of quasi-periodicity.

For some additional references on non-Archimedean dynamics and its
relationship, similarities, and differences with respect to the
Archimedean theory of complex dynamics, see e.g.\@
\cite{Arrowsmith/Vivaldi:1993,Benedetto:2000c,
Benedetto:2001a,Benedetto:2002, Benedetto:2003a,
Bezivin:2004a,Bezivin:2004b,Hsia:1996,Hsia:2000,Khrennikov:2001a,
Rivera-Letelier:2003,DeSmedtKhrennikov:1997,
AnashinKhrennikov:2009,Khrennikov/Nilsson:2004,
Khrennikov:2003ryssbok,DragovichKhrennikovMihajlovic:2007,
KhrennikovMukhamedovMendes:2007,KhrennikovSvensson:2007,
Khrennikov:2003nauk, Svensson:2005,NilssonNyqvist:2004}.
%
%
Applications of $p$-adic numbers have  been proposed in coding
theory \cite{CalderbankSloane:1995},
round off errors \cite{Bosio/Vivaldi:2000}, random number
generation \cite{WoodcockSmart}, and in biochemistry and physics
\cite{AvetisovBikulovKozyrevOsipov:2002,BaakeMoodySchlottmann:1998,Khrennikov:1997,Khrennikov:2004a
,RammalToulouseVirasoro:1986}.

\section{Summary of results}

Our most general result on the size of a linearization disc in $\mathbb{C}_p$ can be stated in the
following way (see also Theorem \ref{theorem general estimate} and
Lemma \ref{lemma upper bound Siegel and isometry}).

\begin{thmA}[Estimate of linearization discs in $\mathbb{C}_p$] \label{theorem linearization disc cp general}
Let $f\in\mathbb{C}_p[[x]]$ have an indifferent
fixed point $x_0$, with multiplier $\lambda =f'(x_0)$, not a root
of unity. Suppose that $f$ has the following
expansion about $x_0$
\begin{equation}\label{definition power series about x0}
f(x)=x_0+\lambda (x-x_0)+\sum_{i\geq 2}a_i(x-x_0)^i, \quad
\textrm{with } a=\sup_{i\geq 2}|a_i|^{1/(i-1)}.
\end{equation}
Then, the linearization disc $\Delta_f(x_0)$, satisfies
$D_{\sigma(\lambda,a)}(x_0)\subseteq \Delta_f(x_0)\subseteq
\overline{D}_{1/a}(x_0)$,
where $\sigma(\lambda,a)$ is defined by (\ref{definition
sigma}). Moreover, if the conjugacy function $g$ converges on the
closed disc $\overline{D}_{\sigma(\lambda,a)}(x_0)$, then
$\Delta_f(x_0)\supseteq \overline{D}_{\sigma(\lambda,a)}(0)$. In
particular, $f$ can have no periodic points in the punctured open
disc $D_{\sigma(\lambda,a)}(x_0)\setminus \{x_0\}$.
\end{thmA}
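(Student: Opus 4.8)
The plan is to reduce to the normalized situation and then treat the two inclusions by completely different mechanisms: the lower bound is a convergence-and-injectivity statement about $g$, while the upper bound is a non-injectivity obstruction for $f$. First I would conjugate by the translation $x\mapsto x-x_0$, which commutes with the Schröder equation (\ref{schroder functional equation}) and preserves all radii, so that we may assume $x_0=0$ and $f(x)=\lambda x+\sum_{i\geq 2}a_ix^i$ with $|\lambda|=1$ and $a=\sup_{i\geq 2}|a_i|^{1/(i-1)}$.

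For the lower bound $D_{\sigma(\lambda,a)}(0)\subseteq\Delta_f(0)$, I would normalize $g(x)=x+\sum_{n\geq 2}b_nx^n$ and extract from $g\circ f=\lambda g$, by comparing coefficients of $x^n$, the recursion
\[
b_n(\lambda-\lambda^n)=\sum_{k=1}^{n-1}b_k\,[x^n]f(x)^k,
\]
where $[x^n]$ denotes the coefficient of $x^n$; since $\lambda$ is not a root of unity the small divisor $\lambda-\lambda^n=\lambda(1-\lambda^{n-1})$ is nonzero for every $n\geq 2$, so each $b_n$ is well defined. Using the ultrametric inequality together with $|a_i|\leq a^{i-1}$, I would prove by induction an estimate of the form $|b_n|\leq a^{n-1}\prod(\text{small-divisor factors})$, and then verify that the radius $\sigma(\lambda,a)$ of (\ref{definition sigma}) is precisely the largest $r$ for which $|b_n|\,r^{n-1}\leq 1$ for all $n\geq 2$. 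This simultaneously forces $g$ to converge on $D_\sigma(0)$ and keeps its nonlinear part from ever dominating the linear term there, so that $g$ is an isometry, hence one-to-one, onto its image; combined with the validity of the functional equation this gives $D_\sigma(0)\subseteq\Delta_f(0)$. This convergence-and-estimate step is the technical heart and the main obstacle, since one must control the accumulation of the factors $|1-\lambda^{n-1}|^{-1}$ against the geometric gain from $a$ and $\sigma$ — the $p$-adic analogue of the classical small-divisor problem, carried out in detail in Theorem \ref{theorem general estimate}. For the closed-disc refinement I would note that the isometry estimate only uses $|b_n|\,r^{n-1}\leq 1$, so once $g$ is \emph{assumed} to converge on $\overline{D}_\sigma(0)$ the same argument runs at $r=\sigma$ and yields $\overline{D}_\sigma(0)\subseteq\Delta_f(0)$; the extra convergence hypothesis is exactly what is needed to push the injectivity argument to the boundary, where the a priori bound is only $\leq 1$.

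For the upper bound $\Delta_f(0)\subseteq\overline{D}_{1/a}(0)$, I would use the fact, noted before the statement, that $f$ is necessarily one-to-one on any linearization disc, and combine it with the behaviour of $f$ at the radius $1/a$. Writing $f(x)-f(y)=\lambda(x-y)+\sum_{i\geq 2}a_i(x^i-y^i)$ and factoring $x^i-y^i=(x-y)\sum_{j}x^jy^{\,i-1-j}$ shows that the linear term strictly dominates exactly when $|x|,|y|<1/a$, so $1/a$ is precisely the threshold beyond which the bound $a=\sup_{i\geq 2}|a_i|^{1/(i-1)}$ permits a nonlinear term to match or overtake $\lambda(x-y)$ and injectivity of $f$ breaks down. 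The precise statement that $f$ fails to be one-to-one on any disc strictly larger than $\overline{D}_{1/a}(0)$ is Lemma \ref{lemma upper bound Siegel and isometry}, and since $\Delta_f(0)$ is a disc of injectivity of $f$ it must be contained in $\overline{D}_{1/a}(0)$.

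Finally, the absence of periodic points follows formally once the lower bound is established. The isometry $g$ fixes $0$ and maps $D_\sigma(0)$ onto itself, and $x\mapsto\lambda x$ preserves $g(D_\sigma(0))=D_\sigma(0)$ because $|\lambda|=1$, so $D_\sigma(0)$ is $f$-invariant and the full conjugacy $g\circ f^{\circ n}\circ g^{-1}(x)=\lambda^nx$ applies to every orbit inside it. If $x\in D_\sigma(0)$ satisfies $f^{\circ n}(x)=x$, then $\lambda^ng(x)=g(x)$, i.e. $(\lambda^n-1)g(x)=0$; since $\lambda$ is not a root of unity we have $\lambda^n\neq 1$ for all $n\geq 1$, whence $g(x)=0=g(0)$, and the injectivity of $g$ gives $x=0$, that is $x=x_0$ after undoing the translation. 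Thus $f$ has no periodic point in the punctured disc $D_{\sigma(\lambda,a)}(x_0)\setminus\{x_0\}$.
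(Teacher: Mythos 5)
Your proposal is correct and follows essentially the same route as the paper: reduction to $x_0=0$, the inductive small-divisor bound on the coefficients $b_n$ (the paper's Lemma \ref{lemma bk estimate indiff}), the arithmetic of $|1-\lambda^n|$ and the case analysis behind Theorem \ref{theorem general estimate} to evaluate that bound at radius $\sigma$, the non-Archimedean injectivity criterion applied both to $g$ on $D_{\sigma}(0)$ and to $f$ to get the upper bound $\overline{D}_{1/a}(0)$ (Lemma \ref{lemma upper bound Siegel and isometry}), and the conjugacy argument $(\lambda^n-1)g(x)=0$ for the absence of periodic points. One minor overstatement worth noting: $\sigma$ is not ``precisely the largest $r$ for which $|b_n|r^{n-1}\leq 1$ for all $n$'' (the quadratic case of Theorem \ref{theorem quadratic polynomials} shows the true threshold can be $|1-\lambda|^{-1/p}\sigma>\sigma$); what is actually needed, and what the paper's estimates deliver, is only that the inequality holds --- strictly, which is what justifies the closed-disc refinement --- at $r=\sigma$.
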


The proof is based on estimates of the coefficients of the
conjugacy function $g$. Applying a result of Benedetto
\cite{Benedetto:2003a} (Proposition \ref{proposition-discdegree}
below), on these estimates we find a lower bound for the region of
convergence of the inverse $g^{-1}$, and hence of the linearization disc.

Note that the estimate $\sigma=\sigma(\lambda,a)$ depends only on
$\lambda$ and the real number $a$. To find the exact size of the
linearization disc we do in general need more information about the
coefficients of $f$. However, for a large class of quadratic polynomials,
and certain
power series containing a `sufficiently large' quadratic term, we
prove that
\begin{equation}
\tau=|1-\lambda|^{-1/p}\sigma(\lambda,a)
\end{equation}
is the exact radius of the linearization disc. More precisely, our main result can
be stated in the following way  (see also Theorem
\ref{theorem quadratic polynomials}).
\begin{thmA}[Linearization disc for quadratic maps]\label{thmA quadratic
maps}
Let $p$ be an odd prime. Let
\[f(x)=x_0+\lambda (x-x_0)+
a(x-x_0)^2\in\mathbb{C}_p[x-x_0],
\]
with $\lambda$ not a root of unity. Suppose that
$p^{-1}<|1-\lambda |<1$. Then, the linearization disc $\Delta_f(x_0)$ is
equal to the disc $D_{\tau(\lambda,a)}(x_0)$, where the radius
$\tau(\lambda,a)=|1-\lambda |^{-1/p}\sigma(\lambda,a)$.

\end{thmA}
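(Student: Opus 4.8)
The plan is to identify the radius of $\Delta_f(x_0)$ with the distance from $x_0$ to the nearest zero of the conjugacy $g$ distinct from $x_0$, and then to locate that zero by a Newton-polygon analysis of the coefficients of $g$. Translating so that $x_0=0$, write $f(x)=\lambda x+ax^2$ and $g(x)=\sum_{i\ge1}c_ix^i$ with $c_1=1$. Matching coefficients of $x^n$ in the Schr\"{o}der equation (\ref{schroder functional equation}) gives the recursion
\[
c_n=\frac{1}{\lambda\,(1-\lambda^{\,n-1})}\sum_{i=\lceil n/2\rceil}^{\,n-1}\binom{i}{\,n-i\,}\lambda^{2i-n}a^{\,n-i}c_i,\qquad n\ge2 .
\]
Set $t=v(1-\lambda)$ (so $0<t<1$ by the hypothesis $p^{-1}<|1-\lambda|<1$) and $\alpha=v(a)$, where $v=-\log_p|\cdot|$. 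The first step is to show, by induction on this recursion, that $v(c_n)=(n-1)(\alpha-t)$ for $2\le n\le p$: since $1-\lambda^{k}=(1-\lambda)(1+\lambda+\dots+\lambda^{k-1})$ and $1+\lambda+\dots+\lambda^{k-1}\equiv k$ modulo the maximal ideal, one has $|1-\lambda^{k}|=|1-\lambda|$ for $1\le k\le p-1$ (using $p$ odd, so $|k|=1$ and $|2|=1$), and no cancellation occurs among the dominant terms of the sum.

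The decisive feature is that the small divisor first drops at $n-1=p$: a short computation with $\lambda=1+\varepsilon$ gives $|1-\lambda^{p}|=|1-\lambda|^{p}$ when $t<\tfrac1{p-1}$ and $|1-\lambda^{p}|=|p|\,|1-\lambda|$ when $t>\tfrac1{p-1}$, in either case strictly smaller than $|1-\lambda|$. Consequently $c_{p+1}$ is anomalously large, with $v(c_{p+1})$ strictly below the baseline value $p(\alpha-t)$. I would then form the Newton polygon of $g(x)/x=1+\sum_{n\ge2}c_nx^{\,n-1}$: the baseline points $(m,m(\alpha-t))$ are collinear through the origin, while the depressed vertex at $m=p$ coming from $c_{p+1}$ creates the leftmost hull segment, from $(0,0)$ to $(p,v(c_{p+1}))$. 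Its slope produces the smallest roots of $g$, namely $p$ of them at absolute value $|c_{p+1}|^{-1/p}$; and a direct check from the definition (\ref{definition sigma}) of $\sigma$, together with the two expressions for $|1-\lambda^{p}|$ above, shows $|c_{p+1}|^{-1/p}=|1-\lambda|^{-1/p}\sigma(\lambda,a)=\tau$.

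To turn this into an equality $\Delta_f(0)=D_\tau(0)$ I would argue both inclusions. For $\Delta_f(0)\supseteq D_\tau(0)$: the same coefficient estimates give $v(c_n)/(n-1)\ge v(c_{p+1})/p$ for all $n\ge2$, i.e.\ $\inf_{n\ge2}|c_n|^{-1/(n-1)}=\tau$, so $|c_n|\rho^{n-1}<1$ for every $\rho<\tau$; by Benedetto's Proposition \ref{proposition-discdegree}, $g$ is then an isometry onto its image on $D_\tau(0)$, hence one-to-one, and the full conjugacy (\ref{full conjugacy}) holds there. For $\Delta_f(0)\subseteq D_\tau(0)$: the root $\xi$ of $g$ on the sphere $|x|=\tau$ is, by (\ref{schroder functional equation}), a point with $g(f(\xi))=\lambda g(\xi)=0$, so the zero set of $g$ is forward invariant and $\xi$ is a (period-$p$) periodic point distinct from $0$; since a linearization disc contains no periodic point other than the fixed point, $g$ cannot be one-to-one on any disc reaching $|x|=\tau$, giving the open disc $D_\tau(0)$ exactly.

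The main obstacle is the global coefficient estimate behind $\inf_{n\ge2}|c_n|^{-1/(n-1)}=\tau$: one must show that the anomaly introduced by $c_{p+1}$, once it is fed back into the recursion, never produces a later coefficient that is more dangerous per unit degree, i.e.\ that $v(c_n)/(n-1)$ stays at or above $v(c_{p+1})/p$ for all $n$ (the further small-divisor drops at $n-1\in p\mathbb{Z}$ must be shown to be exactly compensated by the extra powers of $a$). A secondary, but essential, point is the absence of cancellation among the leading terms, which is what pins the nearest zero exactly on the sphere $|x|=\tau$ rather than merely bounding it; this is presumably where the restriction to a genuine (sufficiently large) quadratic term is used.
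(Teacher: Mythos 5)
Your setup---the recursion for the $c_n$, the identity $|c_n|=|a|^{n-1}/|1-\lambda|^{n-1}$ for $2\le n\le p$, and the identification of the first anomaly at $n=p+1$ coming from $|p|<1$ and the drop of the small divisor $|1-\lambda^p|$---agrees with the opening of the paper's proof of Theorem \ref{theorem quadratic polynomials}. But the Newton-polygon/nearest-zero mechanism you build on top of it does not work here, and your key numerical identity fails. From $|c_{p+1}|=|a|^{p}/\bigl(|1-\lambda|^{p-2}|1-\lambda^{p}|\bigr)$ one gets, for instance when $p^{-1}<|1-\lambda|<p^{-1/(p-1)}$, that $|c_{p+1}|^{-1/p}=|a|^{-1}p^{-1/p}|1-\lambda|^{(p-1)/p}=p^{1/(p(p-1))}\,\tau$, which is strictly larger than $\tau=|a|^{-1}p^{-1/(p-1)}|1-\lambda|^{(p-1)/p}$. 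More fundamentally, the chord from $(0,0)$ to $(p,v(c_{p+1}))$ is not an edge of the Newton polygon of $g(x)/x$: the exact formula (\ref{bk quadratic simplified}) gives $|c_{p^{I}+1}|\,\tau^{p^{I}+1}=p^{-1/(p-1)}\tau$ for all large $I$, i.e.\ $v(c_{p^{I}+1})/p^{I}=\log_p\tau+\tfrac{1}{p^{I}(p-1)}$, which strictly decreases to $\log_p\tau$ without ever attaining it. So the points $(p^{I},v(c_{p^{I}+1}))$ lie strictly below your proposed first segment for $I\ge2$, the polygon has no finite edge emanating from the origin, and $g$ has no zero other than $0$ in its disc of convergence. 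What actually happens is that $g$ \emph{diverges} on the sphere $S_{\tau}(0)$; the radius of $\Delta_f$ is cut off by loss of convergence, not by a zero of $g$ or a periodic point of $f$ on the boundary. Your upper inclusion $\Delta_f(0)\subseteq D_{\tau}(0)$, argued via a period-$p$ point on $S_{\tau}(0)$, therefore rests on a zero that does not exist, and your lower inclusion rests on $\inf_{n}|c_n|^{-1/(n-1)}$ being attained at $n=p+1$ with value $\tau$, which is false on both counts (the infimum is $\tau$ but is not attained).

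The paper's route avoids this entirely: it proves the closed formula (\ref{bk quadratic simplified}) for every $k$ by isolating a single strictly dominant term in the recursion (the $b_{k-1}$-term, except that when $p\mid k-1$ the factorial factor kills it and the $b_{k-2}$-term takes over---this is precisely where $|1-\lambda|>p^{-1}$ and $p$ odd are used), then reads off divergence of $g$ on $S_{\tau}(0)$ from the subsequence $k=p^{I}+1$ and injectivity of $g$ on $D_{\tau}(0)$ from the uniform bound $|b_k|\tau^{k}\le p^{-1/(p-1)}\tau<|b_1|\tau$ together with Proposition \ref{proposition one-to-one}. If you want to salvage your approach, you would have to replace ``distance to the nearest zero of $g$'' by ``radius of convergence of $g$'' as the quantity to compute, at which point you are forced back to the asymptotics of $|c_{p^{I}+1}|$ rather than the single coefficient $c_{p+1}$.
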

This result is extended in Theorem \ref{theorem quadratic power
series} to power series containing a `sufficiently large'
quadratic term. We also give sufficient conditions, there being a
fixed point on the `boundary' of the linearization disc, i.e.\@ the sphere
$S_{\tau}(x_0)$ about $x_0$ of radius $\tau$. 

Note that $\tau(\lambda,a)<1/a$.
Hence, at least in this case, the linearization disc cannot contain the maximal disc $D_{1/a}(x_0)$
on which $f$ is one-to-one.

The relatively complicated expression for $\sigma$
stems from the
presence of $p^s$th roots of unity in the punctured disc
$D_1(1)\setminus \{1\}$, as described in Section \ref{section
geometry and roots of uni ty C p}. Some properties of $\sigma$ are
discussed in Section \ref{section asympt behav siegel rad}.
In particular, we prove the following result.
\begin{thmA}[Asymptotic behavior of $\sigma$]\label{theorem asymptotic behavior sigma}
Let $|\alpha -\lambda ^m|$ be fixed. Then, the estimate $\sigma$
of the radius of the linearization disc goes to $1/a$ as $m$ or $s$ goes to infinity.
If $s$ and $m$ are fixed, then $\sigma \to 0$ as $|\alpha -\lambda
^m|\to 0$.  
\end{thmA}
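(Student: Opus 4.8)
The plan is to argue directly from the explicit formula (\ref{definition sigma}) for $\sigma(\lambda,a)$. Recall that this formula is built around the \emph{smallest small divisor}: since $\lambda^m$ lies near the primitive $p^s$th root of unity $\alpha$, the power $\lambda^{mp^s}$ lies near $\alpha^{p^s}=1$, so the critical index is $n^\ast=mp^s$, where $|1-\lambda^{n^\ast}|$ attains its minimum. After stripping off the bounded prefactors, the formula presents $a\,\sigma$ essentially as $|1-\lambda^{n^\ast}|$ raised to the power $1/(n^\ast-1)$. The first step is therefore to make this structure precise and to record, using the geometry of roots of unity developed in Section~\ref{section geometry and roots of uni ty C p}, the factorization
\[
1-\lambda^{mp^s}=(\alpha-\lambda^m)\sum_{j=0}^{p^s-1}\alpha^{\,j}(\lambda^m)^{p^s-1-j},
\]
which relates the critical divisor to the resonance defect $|\alpha-\lambda^m|$ and, since the $p^s$ summands are each units congruent modulo the defect, yields $|1-\lambda^{mp^s}|\le p^{-s}|\alpha-\lambda^m|$ once the defect is small.

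For the first assertion I would fix $|\alpha-\lambda^m|$ and let $m$ or $s$ tend to infinity. Then the base $|1-\lambda^{mp^s}|$ stays confined to a fixed range bounded away from $0$, while the exponent $1/(n^\ast-1)=1/(mp^s-1)$ tends to $0$; consequently every factor of the form $c^{1/(mp^s-1)}$ with $c\in(0,1)$ tends to $1$. Even the $s$-dependence hidden in the geometric sum is harmless, since $p^{-s/(mp^s-1)}\to1$ as $s\to\infty$. It remains to check that the bounded prefactors and the remaining non-critical divisor terms in (\ref{definition sigma}) also tend to $1$ under these limits; this reduces to the uniform lower bound $|1-\lambda^n|\ge c$ for $n$ not a multiple of $n^\ast$, again furnished by Section~\ref{section geometry and roots of uni ty C p}. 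Collecting these limits yields $a\,\sigma\to1$, i.e.\@ $\sigma\to1/a$.

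For the second assertion I would instead fix $m$ and $s$ and let $|\alpha-\lambda^m|\to0$. Now the exponent $1/(mp^s-1)$ is a fixed positive number, while the factorization above forces $|1-\lambda^{mp^s}|\to0$ as the defect vanishes. Hence $|1-\lambda^{mp^s}|^{1/(mp^s-1)}\to0$, and since the prefactors stay bounded, $\sigma\to0$.

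The main obstacle is to confirm that the critical index is indeed $n^\ast=mp^s$ in each regime, that is, that the extremum defining $\sigma$ in (\ref{definition sigma}) is genuinely governed by this single term and is not overtaken by some competing index $n$ as the parameters move. Establishing this requires the precise description, from Section~\ref{section geometry and roots of uni ty C p}, of how $|1-\lambda^n|$ is distributed over $n$ as $\lambda^m$ approaches a $p^s$th root of unity; in particular one must rule out that infinitely many non-critical terms, each individually tending to $1$ but at slower rates, could conspire to prevent convergence to $1/a$ in the first assertion.
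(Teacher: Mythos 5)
Your guiding intuition is the right one---the first assertion comes down to exponents of size $1/(mp^s)$ applied to bases bounded away from zero, the second to a fixed positive exponent applied to a vanishing quantity---but the execution has a genuine gap. The theorem is a statement about the closed-form expression (\ref{definition sigma}),
\[
a\sigma=R(s+1)^{1/m}\,|1-\lambda^m|^{\frac{1}{m}\left(1+\frac{p-1}{p}s\right)}
\left(\frac{|\alpha-\lambda^m|}{|1-\lambda^m|}\right)^{1/mp^s},
\]
and the paper's proof is nothing more than the elementary estimate of Lemma \ref{lemma bound for sigma}: each factor is at most $1$, so $\sigma\le 1/a$; and since $|\alpha-\lambda^m|\le|1-\lambda^m|$ one gets $\sigma\ge a^{-1}p^{-1/(m(p-1))}|\alpha-\lambda^m|^{1/m}$, which already yields the limit as $m\to\infty$, while for $s\to\infty$ one uses $|1-\lambda^m|\ge R(s)\to1$ together with the fact that the remaining exponents are $O(s/p^s)$. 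You never carry out this (or any equivalent) computation on the formula itself. Instead you re-open the derivation of $\sigma$ from the product $\prod_{n<k}|1-\lambda^n|$, and you yourself flag the resulting hole---ruling out that the infinitely many non-critical divisors ``conspire'' against convergence---without closing it. That step is exactly the content of Lemmas \ref{lemma case 1}--\ref{lemma case 3}, which the definition of $\sigma$ already encapsulates; as written, your argument for the first assertion is incomplete precisely at its crux.

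There is also a structural misidentification. The quantity $a\sigma$ is a limiting geometric mean of \emph{all} the divisors $|1-\lambda^n|$, $1\le n\le k-1$ (along the subsequence where Lemma \ref{lemma case 3} is an equality); it is not ``$|1-\lambda^{n^\ast}|$ raised to the power $1/(n^\ast-1)$'' for the single index $n^\ast=mp^s$. In particular the factor $R(s+1)^{1/m}=p^{-1/(mp^s(p-1))}$ in (\ref{definition sigma}) records the accumulated contribution of the infinitely many resonant indices $n=mp^j$ with $j>s$ (it arises from the factorial $|N!|$ in the proofs of Lemmas \ref{lemma case 1}--\ref{lemma case 3}), and the factor $|1-\lambda^m|^{(1/m)(1+s(p-1)/p)}$ records the indices divisible by $m$ but not by $mp^{s+1}$. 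Relatedly, your factorization bound $|1-\lambda^{mp^s}|\le p^{-s}|\alpha-\lambda^m|$ does not agree with the value $|1-\lambda^{mp^s}|=|1-\lambda^m|^{p^s-1}|\alpha-\lambda^m|$ that Lemma \ref{lemma distance 1-lambda m char 0,p} assigns to the critical divisor and on which (\ref{definition sigma}) is built: for $s\ge2$ one has $|1-\lambda^m|^{p^s-1}=p^{-(1-p^{-s})/(1-p^{-1})}\ne p^{-s}$. So even granting your factorization, the asymptotics you would obtain are those of a different expression. The repair is simply to discard the product-based detour and estimate (\ref{definition sigma}) directly, as in Lemma \ref{lemma bound for sigma}.
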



We now turn to the special case when the dynamics is restricted to $\mathbb{Q}_p$ and its finite extensions. In $\mathbb{Q}_p$, there are no $p^s$th roots
of unity in $D_1(1)\setminus \{1\}$, and $\sigma $ takes a simpler
form.

\begin{thmA}[linearization discs in $\mathbb{Q}_p$ for odd primes]\label{theoremA linearization disc in Qp}
Let $p$ be an odd prime, and let $f\in\mathbb{Q}_p[[x-x_0]]$ be of
the form (\ref{definition power series about x0}).
Let $\Delta _f(x_0,\mathbb{Q}_p)=\Delta _f(x_0)\cap \mathbb{Q}_p$ be the corresponding linearization disc
in $\mathbb{Q}_p$. Then, $\Delta_f(x_0,\mathbb{Q}_p)\supseteq
D_{\sigma_1}(x_0,\mathbb{Q}_p)$, where
\[
\sigma_1=a^{-1}p^{-\frac{1}{m(p-1)}}|1-\lambda ^m|^{\frac{1}{m}},
\]
and $m\geq 1$ is the smallest integer such that $|1-\lambda
^m|<1$.
Furthermore, if $|1-\lambda ^m|=p^{-1}$ and $m=p-1$, then
$\Delta_f(x_0,\mathbb{Q}_p)$  is either the open or closed disc of radius $1/a$ about $x_0$. In particular,  if
either $\max_{i\geq 2 }|a_i|^{1/(i-1)}$ is attained (as for polynomials) or $f$
diverges on $S_{1/a}(x_0,\mathbb{Q}_p)$, then $\Delta_f(x_0,\mathbb{Q}_p)=D_{1/a}(x_0,\mathbb{Q}_p)$.
\end{thmA}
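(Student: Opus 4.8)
The plan is to specialize the general estimate in Theorem~\ref{theorem linearization disc cp general} to the field $\mathbb{Q}_p$, where the arithmetic of roots of unity simplifies the expression for $\sigma$. First I would recall that over $\mathbb{C}_p$ the complicated form of $\sigma(\lambda,a)$ comes from the presence of $p^s$th roots of unity inside the punctured disc $D_1(1)\setminus\{1\}$. The key structural input for $\mathbb{Q}_p$ (for odd $p$) is that $\mathbb{Q}_p$ contains no nontrivial $p^s$th roots of unity, so the only relevant contributions come from the $(p-1)$st roots of unity in the unit sphere together with the factor $p^{-1}$ governing the $p$-adic valuation. Concretely, I would show that when the field is $\mathbb{Q}_p$, the general $\sigma$ collapses to
\[
\sigma_1=a^{-1}p^{-\frac{1}{m(p-1)}}|1-\lambda^m|^{\frac{1}{m}},
\]
where $m$ is the least positive integer with $|1-\lambda^m|<1$; this $m$ exists because $\lambda$ is an indifferent non-root-of-unity multiplier, so $\lambda$ lies on the unit sphere and some power of it lies in $D_1(1)$. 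The inclusion $\Delta_f(x_0,\mathbb{Q}_p)\supseteq D_{\sigma_1}(x_0,\mathbb{Q}_p)$ then follows by intersecting the $\mathbb{C}_p$-disc $D_{\sigma(\lambda,a)}(x_0)$ of Theorem~\ref{theorem linearization disc cp general} with $\mathbb{Q}_p$ and verifying that $\sigma(\lambda,a)=\sigma_1$ under the $\mathbb{Q}_p$ hypotheses.

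For the sharper second assertion, I would impose the extremal hypotheses $|1-\lambda^m|=p^{-1}$ and $m=p-1$. Substituting into $\sigma_1$ gives
\[
\sigma_1=a^{-1}p^{-\frac{1}{(p-1)^2}}\,p^{-\frac{1}{p-1}},
\]
and the point of the computation is that under these conditions the estimate $\sigma_1$ coincides, up to the discreteness of the value group of $\mathbb{Q}_p$, with the radius $1/a$ of the maximal disc $\overline{D}_{1/a}(x_0)$ on which $f$ is injective. Since the value group $|\mathbb{Q}_p^{\times}|=p^{\mathbb{Z}}$ is discrete, there are no points of $\mathbb{Q}_p$ with absolute value strictly between $\sigma_1$ and $1/a$; hence $D_{\sigma_1}(x_0,\mathbb{Q}_p)$ and $D_{1/a}(x_0,\mathbb{Q}_p)$ differ at most by the sphere $S_{1/a}(x_0,\mathbb{Q}_p)$, so the linearization disc is pinned down to be either the open or closed disc of radius $1/a$. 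This is exactly the gap between the lower bound $D_{\sigma(\lambda,a)}(x_0)$ and the upper bound $\overline{D}_{1/a}(x_0)$ in Theorem~\ref{theorem linearization disc cp general}, now forced to close because of discreteness.

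To decide between open and closed I would analyze the behavior of $g$ on the critical sphere $S_{1/a}(x_0,\mathbb{Q}_p)$, using the upper bound $\overline{D}_{1/a}(x_0)$ together with the final clause of Theorem~\ref{theorem linearization disc cp general}. If $\max_{i\geq 2}|a_i|^{1/(i-1)}$ is actually attained (as it always is for polynomials, where the sup is over finitely many terms), then there is a genuine coefficient realizing the radius $1/a$, which forces $f$ to fail injectivity somewhere on the closed disc and excludes the sphere from $\Delta_f$; likewise if $f$ (equivalently $g$) diverges on $S_{1/a}(x_0,\mathbb{Q}_p)$, the sphere cannot belong to the linearization disc. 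In either case the disc must be the open disc $D_{1/a}(x_0,\mathbb{Q}_p)$. The main obstacle I anticipate is the verification that the general $\sigma(\lambda,a)$ really reduces to $\sigma_1$ over $\mathbb{Q}_p$: this requires a careful bookkeeping of which roots of unity survive in $\mathbb{Q}_p$ and how the Siegel-type small-divisor estimate behind $\sigma$ simplifies, together with a clean treatment of the boundary case to show that no $\mathbb{Q}_p$-point is lost between $\sigma_1$ and $1/a$. The open-versus-closed dichotomy, by contrast, should follow routinely once the convergence/injectivity behavior on the extremal sphere is settled.
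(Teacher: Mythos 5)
Your plan is correct and follows essentially the same route as the paper: there the first inclusion is Corollary \ref{corollary p-adic linearization disc}, obtained from Theorem \ref{theorem case 1} because discreteness of $|\mathbb{Q}_p^*|$ together with $p$ odd forces $|1-\lambda^m|\le p^{-1}<R(1)=p^{-1/(p-1)}$, so $s=0$ and $\alpha=1$ and the general $\sigma$ collapses to $\sigma_1$, while the maximal case is settled exactly as you propose, by noting $\sigma_1=a^{-1}p^{-p/(p-1)^2}$ with exponent in $(0,1)$ so that $D_{\sigma_1}$ and $D_{1/a}$ coincide inside $\mathbb{Q}_p$, and the open-versus-closed dichotomy comes from the upper bound of Lemma \ref{lemma  f one-to-one}. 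The one point you gloss over --- that no integer power of $p$ lies in $[\sigma_1,1/a)$ --- is glossed over in exactly the same way by the paper's own one-line argument, and is immediate only when $1/a\in|\mathbb{Q}_p^*|$, so I do not count it as a gap relative to the paper's proof.
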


Note that the condition $|1-\lambda ^m|=p^{-1}$ and $m=p-1$, imply that $\lambda $ has a maximal cycle modulo $p^2$ in the sense that it is a generator of the group of units $(\mathbb{Z}/p^2\mathbb{Z})^{*}$. In this case $\lambda $ is said to be \emph{maximal}.

\begin{thmA}[linearization discs in $\mathbb{Q}_2$]\label{theoremA linearization disc in Q2}

Let $f\in\mathbb{Q}_2[[x-x_0]]$ be of
the form (\ref{definition power series about x0}). 
Then, the following two statements hold: 
\begin{enumerate}

\item If $|1-\lambda |<1/2$, then the linearization disc $\Delta_f(x_0,\mathbb{Q}_2)$ contains the open disc of radius
         $\sigma_1= |1-\lambda |/2a $ about $x_0$.

\item If $|1-\lambda |=1/2$, then the linearization disc $\Delta_f(x_0,\mathbb{Q}_2)$ contains the open disc of radius   $\sigma_3=\sqrt{ |1+\lambda|}/a$ about $x_0$.

\end{enumerate}

\end{thmA}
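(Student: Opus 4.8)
The plan is to deduce both statements from the general $\mathbb{C}_p$ estimate of Theorem~\ref{theorem linearization disc cp general} by specializing the quantity $\sigma(\lambda,a)$ of (\ref{definition sigma}) to $p=2$ and then passing to $\mathbb{Q}_2$. After translating $x_0$ to $0$, the multiplier is a unit of $\mathbb{Z}_2$, so $|1-\lambda|\le\tfrac12$ automatically, and the dichotomy in the statement is exactly the congruence dichotomy $\lambda\equiv1\pmod4$ (giving $|1-\lambda|<\tfrac12$) versus $\lambda\equiv-1\pmod4$ (giving $|1-\lambda|=\tfrac12$). Everything is driven by the arithmetic of the small divisors $|1-\lambda^{n}|$, which control the conjugacy coefficients through the Schröder recursion $(\lambda^{n}-\lambda)b_n=-(\text{a polynomial in the }a_i\text{ and }b_j,\ j<n)$; by Proposition~\ref{proposition-discdegree} the radius of injectivity of $g$ is then $\big(\sup_{n\ge2}|b_n|^{1/(n-1)}\big)^{-1}$, so the whole problem reduces to locating the dominant small divisors.

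In the first case, $\lambda\equiv1\pmod4$, the multiplier sits near the trivial root of unity $1$, and the situation is formally the same as the odd-$p$ case of Theorem~\ref{theoremA linearization disc in Qp}. Here the smallest $m$ with $|1-\lambda^{m}|<1$ is $m=1$, and a one-line $p=2$ version of the lifting-the-exponent identity gives $|1-\lambda^{2^{k}}|=|1-\lambda|\,2^{-k}$ for all $k\ge0$. Feeding this into (\ref{definition sigma}) reproduces the odd-$p$ formula with $m=1$; at $p=2$ it reads $\sigma(\lambda,a)=a^{-1}2^{-1}|1-\lambda|=|1-\lambda|/(2a)$. Theorem~\ref{theorem linearization disc cp general} then yields $\Delta_f(x_0)\supseteq D_{\sigma_1}(x_0)$ in $\mathbb{C}_2$, and intersecting with $\mathbb{Q}_2$ gives statement~1.

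The second case, $\lambda\equiv-1\pmod4$, is the genuinely two-adic one, because now $\lambda$ lies near the \emph{nontrivial} root of unity $-1$, which belongs to $D_1(1)\setminus\{1\}$ (as $|1-(-1)|=\tfrac12$) — the clustering phenomenon absent for odd $p$ and responsible for the more elaborate $\sigma$ of Section~\ref{section geometry and roots of uni ty C p}. Writing $\mu=-\lambda\equiv1\pmod4$ one finds the dichotomy $|1-\lambda^{n}|=\tfrac12$ for $n$ odd and $|1-\lambda^{n}|=|1+\lambda|\,2^{-k}$ when $n=2^{k}m$ with $m$ odd, so the first genuinely small divisor is $|1-\lambda^{2}|=\tfrac12|1+\lambda|$, and it enters $b_3$. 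The order of $-1$ being $2$ is what produces a square root: the $b_3$ term alone already limits the $\mathbb{C}_2$ injectivity radius to $|b_3|^{-1/2}=a^{-1}|1-\lambda^{2}|^{1/2}=a^{-1}\sqrt{|1+\lambda|/2}$, and the later indices $n=2^{k}+1$, fed by the divisors $|1-\lambda^{2^{k}}|=|1+\lambda|\,2^{-k}$, push it down further. I would evaluate $\sigma(\lambda,a)$ from (\ref{definition sigma}) in this regime and then show that, over $\mathbb{Q}_2$, the injectivity disc is in fact the larger $D_{\sigma_3}(x_0)$ with $\sigma_3=a^{-1}\sqrt{|1+\lambda|}$.

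The main obstacle is precisely this passage from $\mathbb{C}_2$ to $\mathbb{Q}_2$. Over $\mathbb{C}_2$ the map $g$ is in general \emph{not} injective on the full disc $D_{\sigma_3}(x_0)$: already $|b_3|\,\sigma_3^{2}=2>1$, and $|b_{2^{k}+1}|\,\sigma_3^{\,2^{k}}$ grows with $k$, so the Weierstrass degree exceeds $1$ and there are genuine collisions at the critical radius; the content of case~2 is that these do not descend to $\mathbb{Q}_2$. To prove this I would analyze the Newton polygon of $g(x)-w$ over $\mathbb{Q}_2$ on $\overline{D}_{\sigma_3}(x_0)$ for each image value $w\in\mathbb{Q}_2$: the segments of non-integer slope contribute only ramified preimages, which leave $\mathbb{Q}_2$, while the critical integer-slope segment contributes a quadratic factor whose discriminant is a non-square in $\mathbb{Q}_2$ (at leading order because its $2$-adic valuation is odd, i.e.\ the relevant value is not a norm from the unramified quadratic extension), so its two roots are conjugate over $\mathbb{Q}_2$ and likewise leave $\mathbb{Q}_2$. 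Consequently each $w$ has a single preimage in $\mathbb{Q}_2\cap\overline{D}_{\sigma_3}(x_0)$, which is the asserted injectivity. Upgrading this from the leading $b_3$-term to the full series $g$ — bounding every coefficient on the critical sphere and verifying the slope/non-residue dichotomy uniformly in $w$ — is the technical heart of the proof and the step I expect to demand the most care.
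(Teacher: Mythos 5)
Your treatment of the first statement coincides with the paper's own proof: Corollary \ref{corollary case 3} likewise observes that $p=2$ forces $m=1$, that $|1-\lambda|<1/2$ places $\lambda$ in Case I, and reads off $\sigma_1=|1-\lambda|/2a$ from Theorem \ref{theorem case 1}. Your small-divisor arithmetic for the second case is also exactly what the paper uses ($\alpha=-1$, $|1-\lambda^2|=|1+\lambda|/2$, and $|1-\lambda^{2^k}|=2^{-k}|1+\lambda|$, all instances of Lemma \ref{lemma distance 1-lambda m char 0,p}), and you correctly note that the resulting $\mathbb{C}_2$-estimate of Theorem \ref{theorem case 3} is $\sqrt{|1+\lambda|}/(2^{3/2}a)$, strictly smaller than the radius asserted in the statement.

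Where you part company with the paper is in how the factor $2^{3/2}$ is removed, and this is where your proposal has a genuine gap. The paper disposes of it in one sentence by a value-group observation: since $\mathbb{Q}_2$ is unramified, the absolute values realized by points of $\mathbb{Q}_2$ are integer powers of $2$, and the author argues that the rational radius may therefore be adjusted by the factor $2^{3/2}$ without changing the set of $\mathbb{Q}_2$-points covered. You instead set out to prove that $g$, while \emph{not} injective on $D_{\sqrt{|1+\lambda|}/a}(x_0)$ over $\mathbb{C}_2$, is injective on the $\mathbb{Q}_2$-points of that disc, via a Newton-polygon analysis of $g(x)-w$ together with an irreducibility (non-square discriminant) argument for the critical quadratic factor. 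But the decisive claims --- that the integer-slope segment always produces a factor with no root in $\mathbb{Q}_2$, uniformly in $w$ and for the full series $g$ rather than its $b_3$-truncation --- are exactly what you defer as ``the technical heart,'' and nothing in the sketch (e.g.\ that the discriminant has odd $2$-adic valuation ``at leading order'') is verified; as written, your argument establishes only the smaller $\mathbb{C}_2$-radius $\sqrt{|1+\lambda|}/(2^{3/2}a)$ already provided by Theorem \ref{theorem case 3}. There is also a definitional mismatch to watch: the paper defines $\Delta_f(x_0,\mathbb{Q}_2)$ as $\Delta_f(x_0)\cap\mathbb{Q}_2$, the intersection of the $\mathbb{C}_2$-linearization disc with $\mathbb{Q}_2$ (cf.\ the statement of Theorem \ref{theoremA linearization disc in Qp}), so injectivity of $g$ on $\mathbb{Q}_2$-points alone, even if fully proved, would not by itself show that \emph{this} set contains $D_{\sqrt{|1+\lambda|}/a}(x_0)\cap\mathbb{Q}_2$; you would additionally have to reconcile your conclusion with the fact that the $\mathbb{C}_2$-disc on which $g$ is one-to-one stops short of that radius.
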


\begin{thmA}[Maximal linearization discs in extensions of $\mathbb{Q}_p$]\label{theoremA maximal
 linearization disc in K }
Let $K$ be a finite extension of $\mathbb{Q}_p$ of degree $n$, with ramification index $e$, residue field  $k$ of  degree $[k:\mathbb{F}_p]=n/e$,
and uniformizer $\pi$. Let $f\in K[[x]]$ be a power series of the form (\ref{definition power series about x0})  and  $\alpha$ a root of unity 
such that 
there is no closer root 
of unity to $\lambda^{p^{n/e}-1}$ than $\alpha$.

Suppose that $\lambda$ has a maximal cycle
modulo $\pi^2 $ and 
\[
\log_p e  \leq (p^{n/e} -3)p/(p-1)- \nu\left ( \frac{\alpha - \lambda ^{p^{n/e}-1}}{1-\lambda^{p^{n/e}-1}} 
\right )
+\log_p(p-1),
\]
where $\nu$ is the valuation. Then, the linearization disc $\Delta_f(x_0,K)=\Delta_f(x_0)\cap K$ is maximal
in the sense that $\Delta_f(x_0,K)$ is either the open or closed disc of radius $1/a$. In particular,  if
either $\max_{i\geq 2 }|a_i|^{1/(i-1)}$ is attained (as for polynomials) or $f$
diverges on $S_{1/a}(x_0)$, then $\Delta_f(x_0,K)=D_{1/a}(x_0,K)$.
\end{thmA}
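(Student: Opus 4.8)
The plan is to combine the universal upper bound with a refined lower bound obtained by re-examining the small divisors once the orbit is confined to $K$. From Theorem \ref{theorem linearization disc cp general} we already have $\Delta_f(x_0)\subseteq \overline{D}_{1/a}(x_0)$, hence $\Delta_f(x_0,K)\subseteq \overline{D}_{1/a}(x_0)\cap K$; the whole task is therefore to push the radius of $\Delta_f(x_0,K)$ up to $1/a$. Writing $q=p^{n/e}=|k|$ and normalising $x_0=0$, I would work with the inverse conjugacy $g^{-1}(y)=y+\sum_{i\ge 2}b_iy^{i}$, whose coefficients satisfy the recursion coming from the full conjugacy $f\circ g^{-1}(y)=g^{-1}(\lambda y)$ associated to (\ref{schroder functional equation}); explicitly $b_i(\lambda^{i}-\lambda)$ equals $a_i$ plus lower terms, so the denominators are precisely the small divisors $1-\lambda^{\,i-1}$. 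Since the radius on which $g^{-1}$ converges and is injective is governed by $\limsup_i|b_i|^{1/i}$ (via the injectivity criterion underlying Theorem \ref{theorem linearization disc cp general}), maximality reduces to showing that over $K$ these divisors never decay fast enough to force $\limsup_i|b_i|^{1/i}>a$, i.e.\ that the $K$-analogue of the estimate $\sigma(\lambda,a)$ equals $1/a$.

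The arithmetic input is as follows. Because $\lambda\in\mathcal O_K^{*}$ has maximal cycle modulo $\pi^{2}$, its residue generates $k^{*}$, so the smallest $m$ with $|1-\lambda^{m}|<1$ is $m=q-1$, and maximality modulo $\pi^{2}$ pins the first nontrivial divisor at $\nu(1-\lambda^{q-1})=\nu(\pi)=1/e$, the largest value available. The only divisors that can become genuinely small are then those at indices $i-1=(q-1)p^{\,j}$, and I would compute $\nu\!\left(1-\lambda^{(q-1)p^{j}}\right)$ inductively, each multiplication by $p$ raising the valuation in the controlled manner dictated by the geometry of the $p^{s}$th roots of unity contained in $K$. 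The closest root of unity $\alpha$ to $\lambda^{q-1}$ enters exactly here: replacing the trivial choice $1$ by $\alpha$ sharpens the estimate by the factor recorded in $\nu\!\left(\frac{\alpha-\lambda^{q-1}}{1-\lambda^{q-1}}\right)$. Carrying the $1/i$ exponent through this computation, the requirement that every such term satisfy $|b_i|^{1/i}\le a$ collapses to a single inequality, and the displayed bound on $\log_p e$ is precisely that inequality: the term $(q-3)p/(p-1)$ is the gain coming from the $q-2$ harmless leading divisors together with the $p$-power growth of $\nu(1-\lambda^{(q-1)p^{j}})$, the term $\log_p(p-1)$ comes from the $(p-1)$ in the valuations of $\zeta_{p^{s}}-1$, the valuation term subtracts the extra closeness contributed by $\alpha$, and $\log_p e$ accounts for the ramification through $\nu(\pi)=1/e$. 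Under the hypothesis this forces the $K$-estimate to equal $1/a$; this is the discrete shadow of Theorem \ref{theorem asymptotic behavior sigma}, where pushing $m$ or $s$ to infinity drives $\sigma$ to $1/a$.

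Once $D_{1/a}(x_0,K)\subseteq\Delta_f(x_0,K)$ is established, it remains to decide the boundary sphere $S_{1/a}(x_0)$, which distinguishes the open from the closed disc. I would test convergence of $g$ and $g^{-1}$ there: if the supremum defining $a$ is attained—as it is for polynomials—or if $f$ already diverges on $S_{1/a}(x_0)$, then no point of the sphere lies in the linearization disc, giving exactly $\Delta_f(x_0,K)=D_{1/a}(x_0,K)$; otherwise the closed disc occurs. The main obstacle is the inductive valuation computation of $\nu(1-\lambda^{(q-1)p^{j}})$ in the ramified, large--residue--field setting: unlike the $\mathbb Q_p$ case of Theorem \ref{theoremA linearization disc in Qp}, where $e=1$, $q=p$ and $\alpha=1$ collapse the bound to a triviality, here one must track simultaneously the residue degree through $q$, the ramification through $e$, and the possible presence of $p$-power roots of unity through $\alpha$, and show that their combined effect is governed by the single clean inequality in the hypothesis.
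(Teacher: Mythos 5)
Your overall architecture does match the paper's: bound the conjugacy coefficients by the product of small divisors $\prod_{n<k}|1-\lambda^n|$ (Lemma \ref{lemma bk estimate indiff}), use maximality of $\lambda$ modulo $\pi^2$ to get $m=p^{n/e}-1$ and $\nu(1-\lambda^m)=1/e$, isolate the dangerous indices $k-1=(p^{n/e}-1)p^{j}$ together with the correction from the nearest root of unity $\alpha$, and read the displayed inequality off the resulting estimate. (Whether one estimates $g$, as the paper does, or $g^{-1}$ is immaterial; and your translation between the paper's parameter $s$ and $\log_p e$ is just $s=\lceil \log_p e-\log_p(p-1)\rceil$.)

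The gap is in the step you present as the reduction. You claim that under the hypothesis the divisors ``never decay fast enough to force $\limsup_i|b_i|^{1/i}>a$,'' i.e.\@ that the coefficient estimates themselves yield injectivity radius $1/a$. That is false and cannot be the mechanism: the bound $\sigma(\lambda,a)$ of (\ref{definition sigma}) always satisfies $\nu(a\sigma)\geq 1/(mp^{s}(p-1))>0$, so $\sigma<1/a$ strictly, and the quadratic case (Theorem \ref{theorem quadratic polynomials} and Corollary \ref{corollary quadratic power series}) shows the linearization disc in $\mathbb{C}_p$ genuinely is smaller than $D_{1/a}(0)$ --- the coefficients really do outgrow $a^{k-1}$. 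What the hypothesis actually encodes, and what the proof of Theorem \ref{theorem maximal linearization disc in K } verifies by computing $\nu(a\sigma)$ explicitly, is the inequality $\nu(a\sigma)<1/e$. Maximality of $\Delta_f(x_0,K)$ then comes not from the estimates alone but from the discreteness of the value group $|K^{*}|=p^{\mathbb{Z}/e}$: the deficit between $\sigma$ and $1/a$ is smaller than one step of the value group, so $D_{\sigma}(0)\cap K$ (which lies in $\Delta_f$ by Theorem \ref{theorem general estimate}) already contains every $x\in K$ with $|x|<1/a$. You gesture at this when you say that $\log_p e$ ``accounts for the ramification through $\nu(\pi)=1/e$,'' but the argument you actually propose in its place would fail; the value-group step is the essential idea and needs to be stated and used. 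The treatment of the boundary sphere via Lemma \ref{lemma  f one-to-one} at the end is fine.
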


Note that if the ramification index $e$ is not divisible by $p-1$, then $\alpha =1$ so that
the $\nu$-term vanishes in this case. 
Also note that the linearization disc may be maximal even if $\lambda$ does not have a maximal cycle
modulo $\pi^2 $, see Theorem \ref{theorem maximal linearization disc in K }.

In the final section of this paper we note some facts concerning transitivity, minimality and ergodicity on linearization discs.  In particular, we show that transitivity is preserved under
analytic conjugation into a linearization disc.  More precisely.

\begin{thmA}[Transitivity and conjugation in non-Archimedean fields]\label{thmA transitivity preserved}
Let $K$ be a complete non-Archimedean field. Suppose that the power series $f(x)=x_0 +
\lambda (x-x_0) +O((x-x_0)^2)\in K[[x-x_0]]$ is
analytically conjugate to $T_{\lambda}$, on the linearization disc
$\Delta_f(x_0)$ in $K$, via a conjugacy function $g$, with
$g(x_0)=0$ and $|g'(x_0)|=1$. Suppose also that the subset $X\subseteq
\Delta_f(x_0)$ is invariant under $f$. Then, the following
statements hold:
\begin{enumerate}[1)]

 \item $f$ is transitive on $X$ if and only if $T_{\lambda}$ is transitive on $g(X)$.
 \item If $X$ is compact and $f$ is transitive on
$X$, then $f$ is minimal on $X$. Moreover, $f(X)=X$ and
$g(X)=T_{\lambda}(g(X))$.
\end{enumerate}
Moreover, if $X$ is compact, the following are equivalent

\end{thmA}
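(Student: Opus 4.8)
The plan is to transport every property through the conjugacy $g$ and then exploit that the linear model $T_\lambda$ is an isometry. Since $|g'(x_0)|=1$ and $g$ is analytic, $g$ is a bijective isometry of $\Delta_f(x_0)$ onto its image; hence it is a homeomorphism carrying the compact invariant set $X$ to a compact invariant set $Y:=g(X)$, and (over a locally compact $K$) carrying normalized Haar measure on $X$ to normalized Haar measure on $Y$, because an isometry sends a ball of radius $r$ to a ball of radius $r$. By part~1) together with this measure-preserving homeomorphism, each of the properties transitivity, minimality, unique ergodicity, and ergodicity with respect to Haar measure holds for $f$ on $X$ if and only if the corresponding property holds for $T_\lambda$ on $Y$. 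It therefore suffices to prove the equivalences for the isometry $T_\lambda$ restricted to $Y$.

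I would then run the implications as a cycle. The implication (transitive $\Rightarrow$ minimal) is exactly part~2), while (minimal $\Rightarrow$ transitive) is immediate. For the crucial step (minimal $\Rightarrow$ uniquely ergodic) I would invoke Oxtoby's criterion \cite{Oxtoby:1952}: $T_\lambda$ is uniquely ergodic on $Y$ if and only if, for every $\varphi\in C(Y)$, the Birkhoff averages $A_n\varphi=\frac1n\sum_{k=0}^{n-1}\varphi\circ T_\lambda^{\,k}$ converge uniformly to a constant. Because $T_\lambda$ is an isometry, the iterates $\{T_\lambda^{\,k}\}$ are equicontinuous, so the averages $A_n\varphi$ form an equicontinuous, uniformly bounded family. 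By Arzel\`a--Ascoli any subsequential uniform limit $\bar\varphi$ is continuous, and from $A_n\varphi-A_n\varphi\circ T_\lambda=\frac1n(\varphi-\varphi\circ T_\lambda^{\,n})\to 0$ it is $T_\lambda$-invariant; minimality then forces $\bar\varphi$ to be constant. A standard argument promotes this to full uniform convergence of $A_n\varphi$ to that constant, which yields unique ergodicity. Finally, (uniquely ergodic $\Rightarrow$ ergodic) is trivial, and for (ergodic $\Rightarrow$ transitive) I would use that the invariant Haar measure has full support on $Y$, so ergodicity produces a full-measure set of points with dense orbit, and in particular transitivity.

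The main obstacle will be the step (minimal $\Rightarrow$ uniquely ergodic): although the equicontinuity coming from the isometry property is precisely what tames the Birkhoff averages, care is needed to identify the unique invariant measure as normalized Haar and to confirm Oxtoby's uniform-convergence criterion cleanly in the ultrametric setting, so that the loop closes and all four notions coincide. A secondary point to handle is the measure-theoretic reduction itself, namely checking that $g$ genuinely preserves Haar measure and that $Y$ is a well-behaved (for the applications, a finite union of spheres about the origin) invariant set; once unique ergodicity and this identification are in hand, the equivalence with ergodicity and the passage back to transitivity are straightforward.
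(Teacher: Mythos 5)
Your overall strategy is the paper's: establish that $g$ is a bijective isometry, transfer every dynamical property through it to the linear model $T_\lambda$ on $g(X)$, and then settle the equivalences for an isometry of a compact metric space. The one genuine divergence is at the Oxtoby step: where the paper simply cites Oxtoby's theorem that a bijective isometry of a compact metric space is uniquely ergodic if and only if it is minimal (and Walters for ``transitive isometry $\Rightarrow$ minimal''), you re-derive the implication (minimal $\Rightarrow$ uniquely ergodic) from scratch via equicontinuity of the iterates, Arzel\`a--Ascoli, invariance of subsequential limits of the Birkhoff averages, and the identification of the limiting constant with $\int\varphi\,d\mu$. That argument is correct and standard, and it buys you a self-contained proof at the cost of length; the paper buys brevity by citation. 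Your closing of the loop via (ergodic for a full-support invariant measure $\Rightarrow$ transitive) is also fine and matches the hypothesis of item 4.

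There is, however, a genuine gap: part 2) of the theorem is never actually proved. In your cycle you dispose of (transitive $\Rightarrow$ minimal) by declaring it ``exactly part~2)'', but part 2) is one of the conclusions you are supposed to establish, so as written the argument is circular; and the remaining claims of part 2), namely $f(X)=X$ and $g(X)=T_\lambda(g(X))$, are not addressed anywhere. These need a short argument, which the paper supplies: transitivity gives a point whose $T_\lambda$-orbit is dense in $g(X)$, and the tail of that orbit lies in $T_\lambda(g(X))$, so $T_\lambda(g(X))$ is dense in $g(X)$; since $X$ is compact and $g$, $T_\lambda$ are continuous, $T_\lambda(g(X))$ is compact, hence closed, hence equal to $g(X)$, and applying $g^{-1}$ gives $f(X)=X$. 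With surjectivity in hand, $f$ restricted to $X$ is a transitive bijective isometry of a compact metric space and is therefore minimal (this is the standard fact the paper takes from Walters; if you want to avoid the citation you must prove it, e.g.\ by the usual almost-periodicity argument for isometries). A smaller point: your appeal to Haar measure should be phrased with care, since the theorem is stated over an arbitrary complete non-Archimedean field ($\mathbb{C}_p$ is not locally compact); but because $X$ is assumed compact and item 4 only posits an invariant Borel measure positive on non-empty open sets, nothing in your argument actually needs global Haar measure, so this is a matter of wording rather than substance.
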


In fact, the minimality of $f$ is equivalent to its unique ergodicity.

\begin{thmA}[Unique ergodicity in non-Archimedean fields]\label{thmA minimlity ergodicity subset}
Let $K$, $f$, $\Delta
_f(x_0)$, and $g$ be as in Theorem \ref{thmA transitivity
preserved}.
Suppose that the subset $X\subset \Delta_f(x_0)$ is non-empty,
compact and invariant under $f$. The following statements are
equivalent:
\begin{enumerate}[1.]
      \item $T_{\lambda} : g(X)\to g(X) $ is minimal.
      \item $f: X\to X$ is minimal.
      \item $f: X\to X$ is uniquely ergodic.
      \item $f$ is ergodic for any $f$-invariant measure $\mu$ on the Borel sigma-algebra $\mathcal{B}(X)$ that is
      positive on non-empty open sets.
    \end{enumerate}

\end{thmA}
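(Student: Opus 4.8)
The plan is to transport everything through the conjugacy $g$ and reduce the four statements to assertions about the single, concrete map $T_{\lambda}$. Since $x_0$ is indifferent we have $|\lambda|=1$, so $T_{\lambda}$ is an isometry of $\mathbb{C}_p$: $|T_{\lambda}(y)-T_{\lambda}(y')|=|\lambda|\,|y-y'|=|y-y'|$. Moreover, as $g$ is an analytic conjugacy on $\Delta_f(x_0)$ with $|g'(x_0)|=1$, it is an isometry there, hence a homeomorphism of $X$ onto $Y:=g(X)$; consequently $Y$ is compact and $T_{\lambda}$-invariant, $\mu\mapsto g_{*}\mu$ is a bijection between $f$-invariant and $T_{\lambda}$-invariant Borel measures, and it preserves ergodicity, unique ergodicity, and the property of being positive on non-empty open sets (the last because $g$ is a homeomorphism). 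Thus statements (2)--(4) for $f$ on $X$ are equivalent to the same statements for the isometry $T_{\lambda}$ on $Y$, and statement (1) is literally the minimality of $T_{\lambda}$ on $Y$. The equivalence $(1)\Leftrightarrow(2)$ is then immediate from Theorem \ref{thmA transitivity preserved}: on the compact sets $X$ and $Y$ transitivity and minimality coincide, and transitivity of $f$ on $X$ is equivalent to transitivity of $T_{\lambda}$ on $Y$. It remains to prove, for the isometry $T_{\lambda}$ on the compact invariant set $Y$, the cycle $(2)\Rightarrow(3)\Rightarrow(4)\Rightarrow(2)$.

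For $(2)\Rightarrow(3)$ I would invoke Oxtoby's criterion \cite{Oxtoby:1952}: unique ergodicity is equivalent to the uniform convergence, for every continuous $\phi$, of the Birkhoff averages $A_n\phi=\frac1n\sum_{k=0}^{n-1}\phi\circ T_{\lambda}^{k}$ to a constant. Because $T_{\lambda}$ is an isometry, the iterates $\phi\circ T_{\lambda}^{k}$ share the modulus of continuity of $\phi$, so the family $\{A_n\phi\}$ is uniformly bounded and equicontinuous. By Arzel\`a--Ascoli every subsequence has a uniform limit $h$, and the telescoping identity $A_n\phi-(A_n\phi)\circ T_{\lambda}=\frac1n(\phi-\phi\circ T_{\lambda}^{n})\to 0$ shows $h$ is $T_{\lambda}$-invariant; on the minimal system $Y$ an invariant continuous function is constant. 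Fixing any invariant measure $\mu$ (which exists by Krylov--Bogolyubov) and integrating $A_{n}\phi$ against $\mu$ forces every such limit to equal $\int\phi\,d\mu$, so the limit is unique and the convergence is uniform. Hence $T_{\lambda}$ is uniquely ergodic.

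The implication $(3)\Rightarrow(4)$ is the soft one: if $T_{\lambda}$ is uniquely ergodic with unique invariant measure $\mu_0$, then $\mu_0$ is automatically ergodic and is the only invariant measure, so the only invariant measure that could be positive on non-empty open sets is $\mu_0$ itself, and it is ergodic. For $(4)\Rightarrow(2)$ I would argue by contraposition, and this is where I expect the main difficulty. Assume $T_{\lambda}$ is not minimal on $Y$. Let $G$ be the closure of $\{T_{\lambda}^{n}:n\in\mathbb{Z}\}$ in the group of isometries of $Y$ with the uniform topology; equicontinuity and Arzel\`a--Ascoli make $G$ a compact abelian group acting on $Y$, and a short argument using the density of $\{T_{\lambda}^{n}\}$ in $G$ shows that the $T_{\lambda}$-orbit closures coincide with the $G$-orbits and are all minimal. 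Each $G$-orbit carries a unique invariant measure, namely the push-forward of the Haar measure of $G$, and every $T_{\lambda}$-invariant measure on $Y$ is an integral of these orbit measures against a measure $\nu$ on the orbit space $Y/G$. Since $T_{\lambda}$ is not minimal there is more than one orbit, so $Y/G$ has more than one point; choosing $\nu$ with full support and not a point mass produces a $T_{\lambda}$-invariant measure on $Y$ that has full support yet is a nontrivial mixture of distinct ergodic measures, hence is not ergodic. This contradicts (4) and closes the cycle.

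The genuine obstacle is this last step: turning non-minimality into a full-support, non-ergodic invariant measure. The clean route is through the compact enveloping group $G$ and the decomposition of $G$-invariant measures over the orbit space $Y/G$, the delicate points being that orbit closures of the isometry are minimal and that the orbit space admits a full-support measure supported on more than one orbit. An alternative, avoiding the enveloping group, is to exhibit two disjoint minimal subsets of $Y$ together with a dense family of orbit closures and to assemble a convergent convex combination of their unique invariant measures with full support; either route isolates the same technical core.
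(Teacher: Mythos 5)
Your argument is correct, and its overall architecture matches the paper's: both reduce the problem through the isometric conjugacy $g$ to the bijective isometry $T_{\lambda}$ on the compact set $g(X)$, and both then rest on the equivalence, for a bijective isometry of a compact metric space, of minimality, unique ergodicity, and ergodicity with respect to any invariant measure positive on non-empty open sets. The difference is that the paper simply \emph{cites} this equivalence from Oxtoby \cite{Oxtoby:1952} (pointing to Bryk--Silva \cite{Bryk/Silva:2003} for a $p$-adic proof) and combines it with Lemma \ref{lemma linearization disc isometry} and Theorem \ref{theorem-minimalitypreserved}, whereas you re-prove it: the Arzel\`a--Ascoli/Birkhoff-average argument for $(2)\Rightarrow(3)$ and the enveloping compact group decomposition for $(4)\Rightarrow(2)$ are exactly the content the paper delegates to the reference. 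What your route buys is a self-contained proof that makes visible why non-minimality forces a full-support non-ergodic invariant measure; what the paper's route buys is brevity. Two small points you should make explicit: the isometry of $g$ on $X$ is not a consequence of $|g'(x_0)|=1$ alone but of Lemma \ref{lemma linearization disc isometry} (together with the remark extending it to non-algebraically-closed $K$); and before forming the group generated by $T_{\lambda}$ on $Y=g(X)$ you need $T_{\lambda}(Y)=Y$, which does not follow from invariance $T_{\lambda}(Y)\subseteq Y$ by fiat but does follow from the classical fact that an isometry of a compact metric space into itself is automatically surjective. Neither is a genuine gap.
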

The unique invariant measure $\mu$ is the normalized
Haar measure $\mu$ for which the measure of a disc is equal to the
radius of the disc.

Note that the conjugacy function $g$ maps spheres in the linearization
disc into spheres about the origin. In $\mathbb{Q}_p$, the
multiplier map $T_{\lambda}:x\mapsto \lambda x $ is minimal on
each sphere $S$ about the origin if and only if $\lambda$ is a
generator of $(\mathbb{Z}/p^2\mathbb{Z})^{*}$. Moreover, if
$\lambda$ is a generator of $(\mathbb{Z}/p^2\mathbb{Z})^{*}$, then
as a consequence of Theorem \ref{theoremA linearization disc in Qp}, the
linearization disc $\Delta _f(x_0,\mathbb{Q}_p)$ includes the the open
disc $D_{1/a}(0)$.

\begin{thmA}[Ergodic spheres in $\mathbb{Q}_p$]\label{thmA ergodic discs in Qp}
Let $p$ be an odd prime, and let the series $f\in\mathbb{Q}_p[[x-x_0]]$ be of
the form (\ref{definition power series about x0}). Let $S\subset
\mathbb{Q}_p$ be a non-empty sphere of radius $r<1/a$ about $x_0$,
i.e.\@ $r$ is an integer power of $p$. Then, the following
statements are equivalent:
\begin{enumerate}[1.]
      \item $\lambda $ is a generator of
      $(\mathbb{Z}/p^2\mathbb{Z})^{*}$.
      \item $f:S\to S$ is minimal.
      \item $f: S\to S$ is uniquely ergodic.
      \item $f$ is ergodic for any $f$-invariant measure $\mu$ on the Borel sigma-algebra $\mathcal{B}(S)$ that is
      positive on non-empty open sets.
    \end{enumerate}

\end{thmA}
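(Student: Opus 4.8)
The plan is to prove Theorem~\ref{thmA ergodic discs in Qp} by reducing everything to the action of the multiplier map $T_\lambda$ on spheres about the origin, using the machinery already assembled in Theorems~\ref{thmA transitivity preserved} and~\ref{thmA minimlity ergodicity subset} together with Theorem~\ref{theoremA linearization disc in Qp}. First I would observe that the chain of equivalences $(2)\Leftrightarrow(3)\Leftrightarrow(4)$ is immediate once we know that $S$ is a compact, $f$-invariant subset of the linearization disc $\Delta_f(x_0,\mathbb{Q}_p)$: it is then a direct instance of Theorem~\ref{thmA minimlity ergodicity subset} applied to $X=S$, with the unique invariant measure being the normalized Haar measure. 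So the real content is the equivalence of the arithmetic condition $(1)$ with the dynamical condition $(2)$, and the preliminary verification that a sphere $S_r(x_0)$ with $r<1/a$ actually sits inside the linearization disc.

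The key steps, in order, are as follows. The first step is to establish that $S\subset \Delta_f(x_0,\mathbb{Q}_p)$ whenever $\lambda$ is a generator of $(\mathbb{Z}/p^2\mathbb{Z})^{*}$; this is exactly where Theorem~\ref{theoremA linearization disc in Qp} enters, since the generator condition is precisely $|1-\lambda^m|=p^{-1}$ with $m=p-1$ (i.e.\ $\lambda$ is maximal), and the theorem then gives $\Delta_f(x_0,\mathbb{Q}_p)\supseteq D_{1/a}(x_0,\mathbb{Q}_p)$, which contains every sphere of radius $r<1/a$. The second step is to transport the dynamics through the conjugacy: by Theorem~\ref{thmA transitivity preserved}, $g$ carries $S$ into a sphere $g(S)$ about the origin, and $f$ is minimal (equivalently transitive) on $S$ if and only if $T_\lambda$ is minimal on $g(S)$. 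The third step is the purely arithmetic heart: to determine exactly when $T_\lambda:x\mapsto\lambda x$ is minimal on a sphere about the origin in $\mathbb{Q}_p$. Since $|g'(x_0)|=1$, the conjugacy is an isometry on each sphere and $g(S)$ is again a sphere of the same radius $r$; one reduces $T_\lambda$ acting on $S_r(0)$ to the multiplication action of $\lambda$ on the residue ring, and minimality on the sphere is equivalent to $\lambda$ generating the cyclic structure of $(\mathbb{Z}/p^2\mathbb{Z})^{*}$. This last equivalence is the content already announced in the remark preceding the theorem, and it is where one cites (or reproves, via the orbit-counting argument on $p$-adic units) the fact that $T_\lambda$ is minimal on each sphere about the origin precisely when $\lambda$ generates $(\mathbb{Z}/p^2\mathbb{Z})^{*}$.

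I expect the main obstacle to be the arithmetic step characterizing minimality of $T_\lambda$ on a single sphere $S_r(0)$, and in particular pinning down why the condition is minimality modulo $p^2$ rather than modulo some other power. The subtlety is that minimality of the multiplier map on $S_r(0)$ requires the orbit of a point to fill out the whole sphere, which in the quotient by the translation subgroup amounts to $\lambda$ acting as a single cycle on $(\mathbb{Z}/p\mathbb{Z})^{*}$ \emph{and} correctly advancing through the $p$ distinct residues in the next digit; these two requirements together are exactly the statement that $\lambda$ has order $p(p-1)$ in $(\mathbb{Z}/p^2\mathbb{Z})^{*}$, i.e.\ that it is a generator. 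One must handle carefully the fact that the sphere $S_r(0)$ is a union of $p-1$ smaller discs permuted by $T_\lambda$, and that minimality forces $T_\lambda$ to act cyclically both on these discs and within them at the next level of refinement. The forward direction $(1)\Rightarrow(2)$ is then a positive construction showing the orbit is dense; the reverse direction $(2)\Rightarrow(1)$ is the contrapositive, arguing that if $\lambda$ fails to generate then the sphere decomposes into a proper $T_\lambda$-invariant clopen subset, destroying transitivity. Once this arithmetic equivalence is in hand, combining it with the two cited theorems closes the loop and yields all four equivalences.
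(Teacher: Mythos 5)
Your proposal follows essentially the same route as the paper: the equivalences $(2)\Leftrightarrow(3)\Leftrightarrow(4)$ come from the Oxtoby-based Theorem \ref{thmA minimlity ergodicity subset} applied to the compact invariant set $X=S$, while $(1)\Leftrightarrow(2)$ is obtained by transporting the dynamics to $T_{\lambda}$ through the conjugacy (Theorem \ref{theorem-minimalitypreserved}) and invoking the arithmetic fact that $T_{\lambda}$ is minimal on a sphere about $0$ in $\mathbb{Q}_p$ exactly when $\lambda$ generates $(\mathbb{Z}/p^2\mathbb{Z})^{*}$ (Lemma \ref{lemma transitivity multiplier}), with Theorem \ref{theoremA linearization disc in Qp} guaranteeing $S\subset\Delta_f(x_0,\mathbb{Q}_p)$ in the maximal case. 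The one step you state too quickly is that $g(S)$ is again a \emph{full} sphere of radius $r$: isometry alone only gives $g(S)\subseteq S_r(0)$, since $g$ restricted to $\mathbb{Q}_p$ need not be surjective (the Koebe-type surjectivity is available over $\mathbb{C}_p$, not over $\mathbb{Q}_p$), and this matters for $(2)\Rightarrow(1)$ because $T_{\lambda}$ can be minimal on a proper subset of a sphere even when $\lambda$ is not a generator. The paper closes this by using minimality itself: density of the orbit together with continuity of $g$ and $g^{-1}$ and compactness of $S$ force $g(S)=S$ (this is the argument in the proof of Theorem \ref{theorem-minimalityQp}). With that supplement your outline matches the paper's proof.
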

By Theorem \ref{theoremA linearization disc in Qp} the estimate of the
radius $1/a$ is maximal in the sense that there exist examples of
such $f$, which either diverges on the sphere $S_{1/a}(x_0)$ or
satisfy $f(x)=x_0$ for at least one $x\in S_{1/a}(x_0)$.  We have,
however, not been able to rule out the possibility that in some
cases we may allow $r=1/a$, see Lemma \ref{lemma  f one-to-one}.

Also note that if $\lambda\in S_1(0)$ is not 
a generator of $(\mathbb{Z}/p^2\mathbb{Z})^{*}$, then $T_{\lambda }$ and hence
$f(x)=\lambda x +O(x^2)$ may still be minimal on some
subset of a sphere. A complete classification of the ergodic
breakdown of $\mathbb{Q}_p$ with respect to $T_{\lambda}$ is given in 
\cite{Oselies/Zieschang:1975}.

We also note (lemma \ref{lemma transitivity multiplier}) that in a finite proper extension of $\mathbb{Q}_p$, a power series   
cannot be ergodic on an entire sphere, that is contained in a linearization disc, and centered
about the corresponding fixed point.
In fact, if $K$ is a non-Archimedean field, then ergodicity on a linearization sphere is only possible if $K$
is isomorphic to a field of $p$-adic numbers. For transitivity  to occur, $K$ must be locally compact. Therefore, $K$ is either a $p$-adic field or a field of prime characteristics. Let $K$ be a locally compact field of prime characteristc, with uniformizer $\pi$. If $x\in K$ and $x \equiv 1 \mod \pi$, then $x ^{p^n} \equiv 1\mod \pi ^{p^n}$. As a 
consequence, $T_{\lambda}$ cannot be transitive on a sphere in $K$, see Lemma \ref{lemma non transitivity multiplier char p}.

\begin{thmA}[Ergodic non-Archimedean linearization spheres] \label{theorem non-archimedean ergodic disc}
Let $K$ be a complete non-Archimedean field and let $f$ be holomorphic on a disc $U$ in $K$. Suppose that $f$ has a linearization disc $\Delta\subset U$ and $S\subset \Delta $ is a sphere about the corresponding fixed point $x_0\in K$. Then $f$: $S\to S$
is ergodic if and only if $K$ is isomorphic to $\mathbb{Q}_p$ and the multiplier is a generator of the group of units 
$(\mathbb{Z}/p^2\mathbb{Z})^*$. Furthermore, if $K=\mathbb{Q}_p$ and $\lambda $ is a generator of the group of units 
$(\mathbb{Z}/p^2\mathbb{Z})^*$, then the radius of $\Delta$ is $1/a$ (considered as a disc in $\mathbb{Q}_p$).

\end{thmA}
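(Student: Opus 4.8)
The plan is to transport the whole question to the multiplier map $T_{\lambda}$ through the conjugacy $g$ and then invoke the equivalences already in hand. First I would record the geometric setup: since $f$ is analytically conjugate to $T_{\lambda}$ on $\Delta$ with $|\lambda|=1$, the map $f$ is a bijective isometry of $\Delta$ fixing $x_0$, so $|f(x)-x_0|=|x-x_0|$ and $f$ carries every sphere $S$ about $x_0$ onto itself; correspondingly $g(S)$ is a sphere about the origin with $T_{\lambda}(g(S))=g(S)$. Thus $S$ is a non-empty $f$-invariant set, and once $S$ is known to be compact, Theorem \ref{thmA minimlity ergodicity subset} makes the four notions—minimality of $T_{\lambda}$ on $g(S)$, minimality of $f$ on $S$, unique ergodicity of $f$, and ergodicity of $f$ for every invariant measure positive on open sets—coincide. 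It therefore suffices to decide for which $K$ and which $\lambda$ the rotation $T_{\lambda}$ can be transitive on such a sphere, and to pin down the radius of $\Delta$ in the decisive case.

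For the forward implication I would first show that transitivity forces $K$ to be locally compact. Suppose $f$ has a dense orbit $\{f^{\circ n}(y)\}$ in $S$; given $\varepsilon>0$ choose a return time $N$ with $|f^{\circ N}(y)-y|<\varepsilon$. Because $f$ is an isometry and $K$ is ultrametric, $|f^{\circ(m+kN)}(y)-f^{\circ m}(y)|\le |f^{\circ N}(y)-y|<\varepsilon$ for all $k$, so the finite set $\{f^{\circ m}(y):0\le m<N\}$ is an $\varepsilon$-net for the orbit, hence for $S$. Thus $S$ is totally bounded, and being a closed sphere in the complete field $K$ it is compact, so $K$ is locally compact. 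By the classification of locally compact non-Archimedean fields, $K$ is then either a finite extension of $\mathbb{Q}_p$ or a local field of prime characteristic. The latter is excluded by Lemma \ref{lemma non transitivity multiplier char p}, and proper finite extensions of $\mathbb{Q}_p$ by Lemma \ref{lemma transitivity multiplier}; hence $K\cong\mathbb{Q}_p$. Since $S\subset\Delta$ forces the radius $r<1/a$, Theorem \ref{thmA ergodic discs in Qp} now applies and shows that ergodicity of $f$ on $S$ is equivalent to $\lambda$ being a generator of $(\mathbb{Z}/p^2\mathbb{Z})^{*}$, giving the required condition.

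For the converse, assume $K=\mathbb{Q}_p$ and that $\lambda$ generates $(\mathbb{Z}/p^2\mathbb{Z})^{*}$; this is exactly the maximality condition $m=p-1$, $|1-\lambda^{p-1}|=p^{-1}$. Theorem \ref{theoremA linearization disc in Qp} then identifies $\Delta_f(x_0,\mathbb{Q}_p)$ as the open or closed disc of radius $1/a$, which proves the final radius assertion; and every sphere $S\subset\Delta$ again has radius $r<1/a$, so Theorem \ref{thmA ergodic discs in Qp} yields ergodicity of $f$ on $S$. Combined with the forward direction this closes the equivalence.

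The step I expect to be the main obstacle is the reduction \emph{transitivity} $\Rightarrow$ \emph{local compactness}: one must be certain the ultrametric net argument delivers a finite net for all of $S$ rather than merely for the orbit, which genuinely uses both the density of the orbit and the fact that $f$ is a bijective isometry of $S$ inherited from the conjugacy to $T_{\lambda}$. A secondary care point is the passage from ergodicity to transitivity, since this needs an ergodic measure positive on open sets to have almost every orbit dense—harmless once $S$ is second countable, which holds here. Finally, the prime $p=2$ should be treated separately: a generator of $(\mathbb{Z}/4\mathbb{Z})^{*}$ does not make $T_{\lambda}$ minimal on the full unit sphere, because the unit group of $\mathbb{Z}_2$ is not procyclic, so the decisive $\mathbb{Q}_p$-input is genuinely the odd-prime statement of Theorem \ref{thmA ergodic discs in Qp}.
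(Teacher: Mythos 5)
Your argument is correct and follows the same route the paper itself takes: the paper offers no formal proof of this theorem, only the paragraph preceding it (transitivity forces local compactness, hence $K$ is a $p$-adic field or a local field of prime characteristic; the latter is killed by Lemma \ref{lemma non transitivity multiplier char p}, proper extensions of $\mathbb{Q}_p$ by Lemma \ref{lemma transitivity multiplier}) together with the conjugation results of Theorems \ref{thmA transitivity preserved} and \ref{thmA minimlity ergodicity subset}. Your genuine addition is the $\varepsilon$-net argument showing that a dense orbit of a bijective isometry makes the sphere totally bounded, hence compact, hence $K$ locally compact -- a step the paper merely asserts -- and your observation that a non-empty sphere is open in $K$, so its compactness yields local compactness of the field; this is worth having. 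One point to repair: you claim that $S\subset\Delta$ forces $r<1/a$, but the paper explicitly does not rule out $r=1/a$ (see the remark after Theorem \ref{thmA ergodic discs in Qp} and Lemma \ref{lemma  f one-to-one}), so Theorem \ref{thmA ergodic discs in Qp} is not literally available for every sphere in $\Delta$. In the forward direction this is harmless -- replace it by the chain ergodic $\Rightarrow$ transitive $\Rightarrow$ $T_\lambda$ transitive on the sphere $g(S)$ (Theorem \ref{thmA transitivity preserved}) $\Rightarrow$ $\lambda$ maximal and $p$ odd (Lemma \ref{lemma transitivity multiplier}), which needs no restriction on $r$ -- while in the converse direction the possible boundary sphere $S_{1/a}(x_0)$ is an edge case the paper itself leaves unresolved. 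Your separate remark about $p=2$ is accurate and consistent with Lemma \ref{lemma transitivity multiplier}: the stated equivalence really does require $p$ odd.
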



\section{Preliminaries}\label{section preliminaries}


Throughout this paper $K$ is a non-Archimedean field, complete with respect to a nontrivial absolute value $|\cdot |$. That is, $|\cdot |$ is a multiplicative function
from $K$ to the nonnegative real numbers with $|x|=0$
precisely when $x=0$, satisfying the  
following
strong or ultrametric triangle inequality:
\begin{equation}\label{sti}
|x+y| \leq  \max[|x|,|y|],\quad\text{for all $x,y\in K$},
\end{equation}
and nontrivial in the sense that it is not
identically $1$ on $K^*$, the set of all nonzero elements
in $K$. 
One useful consequence of ultrametricity is that for any $x,y\in
K$ with $|x|\neq |y|$, the inequality (\ref{sti}) becomes an
equality. In other words, if $x,y\in K$ with $|x|<|y|$, then
$|x+y|=|y|$.

In this context it is standard to denote by $\mathcal{O}$, the ring of integers of $K$, given by
$\mathcal{O}=\{x\in K : |x|\leq 1\}$, by $\mathcal{M}$ the unique maximal ideal of
$\mathcal{O}$, given by $\mathcal{M}=\{x\in K: |x|<1\}$, and by $k$ the corresponding \emph{residue field} 
\[
k=\mathcal{O}/\mathcal{M}.
\]
Note that if $K$ has positive characteristic $p$, then also char $k =p$;  but if char $K=0$, then
$k$ could have characteristic $0$ or $p$. 
Note also that if $x,y\in\mathcal{O}$ reduce to \emph{residue classes}
$\overline{x},\overline{y}\in k$, then $|x-y|$ is $1$ if $\overline{x}\neq\overline{y}$, and it is
strictly less than $1$ otherwise. 

In this paper we mainly consider the case when $K$ is either a $p$-adic field, i.e.\@ a finite
 extension of a  field of  $p$-adic numbers $\mathbb{Q}_p$, or a field of complex $p$-adic numbers
 $\mathbb{C}_p$.
 Recall that the $p$-adic numbers are constructed in the following way. For any prime $p$,
 there is a unique absolute value on $\mathbb{Q}$ such that $|p|=1/p$. The field $\mathbb{Q}_p$
 of $p$-adic rationals is defined to be the corresponding completion of $\mathbb{Q}$;
  $\mathbb{C}_p$ is then the completion of an algebraic closure of $\mathbb{Q}_p$.
  Let us also  remark
  that the residue field of $\mathbb{Q}_p$ is the field $\mathbb{F}_p$, of $p$ elements,
  whereas the the residue field of $\mathbb{C}_p$ is the algebraic closure of $\mathbb{F}_p$.
 
Given $K$ with absolute value $|\cdot|$ we define the
\emph{value group} as the image
\begin{equation}\label{def-value group}
|K^{*}|=\{|x|:x\in K^* \}.
\end{equation}
Note that, since $|\cdot |$ is multiplicative, $|K^*|$ is a multiplicative subgroup of the positive
real numbers. We will also consider the full image
$|K|=|K^*|\cup\{0\}$. The absolute value $|\cdot |$ is said to be
\emph{discrete} if the value group is cyclic, that is if there 
is a $\emph{uniformizer}$ $\pi\in K$ such that $|K^{*}|=\{|\pi |^n: n\in
\mathbb{Z}\}$. Note that if $K=\mathbb{Q}_p$, then $p$ is a uniformizer of $K$, and the 
value group consists of all integer powers of $p$. If $K=\mathbb{C}_p$, then 
$|K^{*}|$ consists of all rational powers of $p$. In
particular, the absolute value on $\mathbb{C}_p$ is not discrete.

Recall that $K$ is locally compact (w.r.t. $|\cdot |$) if and only if 
(i) $|\cdot |$ is discrete, and (ii) the residue field $k$ is finite.
If $K$ is a $p$-adic field, then $K$ is locally compact and each integer 
$x\in \mathcal{O}$ has a unique
representation
as a Taylor series in $\pi$ of the form
\begin{equation}\label{equation expansion}
x=\sum_{i=0}^{\infty}x_i\pi^i, \quad x_i\in \mathcal{R},
\end{equation}
where  $ \mathcal{R}$ is a  complete
system  of representatives of the residue field $k$.

Given a prime $p$, a $p$-adic number $x$ can be expressed in base
$p$ as
\[
x=\sum_{k=\nu}^{\infty}x_kp^k, \quad x_k\in\{ 0, ..., p-1 \},
\]
for some integer $\nu$ such that $x_{\nu}\neq 0$ and $x_k=0$ for
all $k<\nu$. The absolute value of $x$ is given by $|x|=p^{-\nu}$.
If $x$ is an integer, its $p$-adic expansion contains no negative
powers of $p$ and hence $|x|\leq 1$.

For future reference, let us note the following lemma.
\begin{lemma}\label{lemma order of p in factorial}
Given a rational number $x$, denote by $\lfloor x\rfloor$ the
integer part of $x$. Let $n\geq 1$ be an integer and let $S_n$ be
the sum of the coefficients in the $p$-adic expansion of $n$.
Then,
\begin{equation}\label{equation order of p in factorial}
\nu(n!)=\frac{n-S_n}{p-1}\leq \frac{n-1}{p-1},
\end{equation}
with equality if $n$ is a power of $p$. Consequently, for all
integers $a\geq 1$,
\begin{equation}\label{limit order of p in factorial}
\frac{\nu(\left\lfloor \frac{n}{a}\right \rfloor!)} {n} \to
\frac{1}{a(p-1)},
\end{equation}
as $n$ goes to infinity.
\end{lemma}
\noindent For a proof of (\ref{equation order of p in factorial}),
the reader can consult \cite[Lemma 25.5]{Schikhof:1984}.

Furthermore, to each finite extension $K$ of $\mathbb{Q}_p$ of degree
$n$, there is an associated \emph{residue class degree}  $f=[k:\mathbb{F}_p]$,
and a \emph{ramification index} $e$ such that 
\begin{equation}\label{def-ramification index}
|K^*|=\{p^{l/e}: l\in\mathbb{Z}\}.
\end{equation}
For example, by adjoining $\sqrt p$ to
$\mathbb{Q}_p$ we get a ramified extension with ramification index
$e=2$. The degree of the extension $n=[K:\mathbb{Q}_p]$, the residue class degree $f$, and the ramification index $e$
satisfy the relation
\[
n=e\cdot f.
\]
A finite extension of degree $n$ is called 
\emph{unramified}, if $e=1$ (or equivalently, $f=n$), and \emph{ramified},
if $e>1$ (or equivalently, $f<n$).  

For more information on $p$-adic numbers and their field
extensions the reader can consult
\cite{Gouvea:1997}.

\subsection{Non-Archimedean discs}\label{section non-Archimedean
discs}

Let $K$ be a complete non-Archimedean field. Given an element
$x\in K$ and real number $r>0$ we denote by $D_{r}(x)$ the open
disc of radius $r$ about $x$, by $\overline{D}_r(x)$ the closed
disc, and by $S_{r}(x)$ the sphere of radius $r$ about $x$.
To omit confusion, we sometimes write $D_r(x,K)$ rather than $D_{r}(x)$
to emphasize that the disc is considered as a disc in $K$.  

If $r\in|K^*|$ (that is if $r$ is actually the absolute value of some
nonzero element of $K$), we say that $D_{r}(x)$ and
$\overline{D}_r(x)$ are \emph{rational}. Note that $S_r(x)$ is
non-empty if and only if $\overline{D}_r(x)$ is rational. If
$r\notin |K^*|$, then we will call $D_{r}(x)=\overline{D}_r(x)$ an
\emph{irrational} disc. In particular, if $a\in K\subset
\mathbb{C}_p$ and $r=|a|^s$ for some rational number
$s\in\mathbb{Q}$, then $D_{r}(x)$ and $\overline{D}_r(x)$ are
rational considered as discs in the algebraic closure
$\mathbb{C}_p$. However, they may be irrational considered as
discs in $K$. Note that all discs are both open and closed as
topological sets, because of ultrametricity.
%
%
However, as we will see in Section \ref{section non-Archimedean
power series} below, power series distinguish between rational
open, rational closed, and irrational discs.





\subsection{Non-Archimedean power series}\label{section non-Archimedean power series}

Let $K$ be a complete non-Archimedean field with absolute value
$|\cdot |$.
Let $f$ be a power series over $K$ of the form
\begin{equation*}
f(x)=\sum_{i=0}^{\infty}a_i(x-\alpha )^i, \quad a_i\in K.
\end{equation*}
Then, $f$ converges on the open disc $D_{R_f}(\alpha )$ of radius
\begin{equation}\label{radius of convergence}
R_f = \frac{1}{\limsup |a_i| ^{1/i}},
\end{equation}
and diverges outside the closed disc $\overline{D}_{R_f}(\alpha )$
in $K$. The power series $f$ converges on the sphere
$S_{R_f}(\alpha )$ if and only if
\[
\lim_{i\to\infty}|a_i| R_f ^i=0.
\]
\begin{definition}
Let $U\subset K$ be a disc, let $\alpha\in U$ and let $f:U\to K$.
We say that $f$ is {\bf holomorphic} on $U$ if we can write
$f$ as a power series
\[
f(x)=\sum_{i=0}^{\infty}a_i(x-\alpha)^i\in K[[x-\alpha]]
\]
which converges for all $x\in U$.
\end{definition}
Holomorphicity is well-defined since,   
contrary to the complex field case, it does not matter which $\alpha\in U$
we choose in the definition of holomorphicity, see e.g.\@ \cite{Schikhof:1984}.

The basic mapping properties of non-Archimedean power series on discs
are given by the following generalization by Benedetto \cite{Benedetto:2003a},
of the  Weierstrass Preparation Theorem \cite{BoschGuntzerRemmert:1984,FresnelvanderPut:1981,
Koblitz:1984}.

\begin{proposition}[Lemma 2.2 \cite{Benedetto:2003a}]\label{proposition-discdegree}
Let $K$ be algebraically closed. Let
$f(x)=\sum_{i=0}^{\infty}a_i(x-\alpha)^i$ be a nonzero power
series over $K$ which converges on a rational closed disc
$U=\overline{D}_R(\alpha)$, and let $0<r\leq R$. Let
$V=\overline{D}_r(\alpha)$ and $V'=D_r(\alpha)$. Then
  \begin{eqnarray*}
    s &=& \max\{|a_i|r^i:i\geq 0\},\\
    d &=& \max\{i\geq 0:|a_i|r^i=s\},\quad and\\
    d'&=& \min\{i\geq 0:|a_i|r^i=s\}
  \end{eqnarray*}
are all attained and finite. Furthermore,
\begin{enumerate}[a.]
 \item $s\geq |f'(x_0)|\cdot r$.
 \item if $0\in f(V)$, then $f$ maps $V$ onto $\overline{D}_s(0)$
 exactly $d$-to-1 (counting multiplicity).
 \item if $0\in f(V')$, then $f$ maps $V'$ onto $D_s(0)$
 exactly $d'$-to-1 (counting multiplicity).
\end{enumerate}
\end{proposition}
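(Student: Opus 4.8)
The plan is to reduce everything to a single "dominant-term" analysis of the weighted coefficient sequence $i \mapsto |a_i|\,r^i$, combined with a Newton-polygon count of the zeros of a convergent power series on a closed disc. First I would translate so that $\alpha = 0$, which changes nothing since holomorphicity is independent of the chosen centre. The finiteness of $s$, $d$, $d'$ is then immediate: convergence of $f$ on the \emph{closed} rational disc $\overline{D}_R(0)$ forces $|a_i|R^i \to 0$, so for $r \le R$ one also has $|a_i|r^i \to 0$. Hence the sequence $|a_i|r^i$ attains its supremum, the supremum $s$ is positive (as $f \neq 0$), and only finitely many indices realise it, with $d$ and $d'$ the largest and smallest such indices. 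Part (a) I would settle by the ultrametric estimate on the derivative directly: for every $x \in V$ one has $f'(x) = \sum_{i\ge 1} i a_i x^{i-1}$, so by the strong triangle inequality together with $|i| \le 1$,
\[
|f'(x)| \le \max_{i \ge 1} |i|\,|a_i|\,|x|^{i-1} \le \max_{i \ge 1} |a_i|\,r^{i-1} \le \frac{s}{r},
\]
and multiplying by $r$ gives $s \ge |f'(x)|\,r$ for every $x \in V$, in particular at the point named in the statement.

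The heart of parts (b) and (c) is the following zero-counting principle, which I would isolate as the key lemma: for a nonzero power series $g(x) = \sum b_i x^i$ converging on $\overline{D}_r(0)$ over an \emph{algebraically closed} field, the number of zeros of $g$ in $\overline{D}_r(0)$, counted with multiplicity, equals $\delta := \max\{i : |b_i|r^i = \max_j |b_j|r^j\}$, while the number of zeros in the open disc $D_r(0)$ equals $\delta' := \min\{i : |b_i|r^i = \max_j |b_j|r^j\}$. I would derive this from the Weierstrass preparation theorem in the Tate algebra of functions convergent on $\overline{D}_r(0)$: one factors $g = u\cdot P$ with $u$ a unit (nonvanishing, of constant absolute value on the disc) and $P$ a distinguished polynomial of degree $\delta$, whose roots are exactly the zeros of $g$ in $\overline{D}_r$. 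Algebraic closedness is what lets $P$ split into $\delta$ linear factors, and the Newton polygon of $P$ distinguishes the roots on the boundary sphere from those in the open disc, which is what accounts for the gap between $\delta$ and $\delta'$.

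With this in hand the mapping statements follow by applying the principle to $g = f - c$. I would first observe that $0 \in f(V)$ forces $d \ge 1$: otherwise $|a_0| = s$ strictly dominates every $|a_i|r^i$ with $i \ge 1$, whence $|f(x)| = |a_0| = s \neq 0$ on all of $V$, contradicting the existence of a zero; the same reasoning gives $0 \in f(V') \Rightarrow d' \ge 1$. Next, for $|c| \le s$ the series $g = f - c$ differs from $f$ only in its constant coefficient $a_0 - c$, and $|a_0 - c| \le \max(|a_0|,|c|) \le s$; since $d \ge 1$, the top dominant index is unaffected, so $g$ has Weierstrass degree $d$ and therefore exactly $d$ zeros in $V$. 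This simultaneously gives $c \in f(V)$ (surjectivity onto $\overline{D}_s(0)$) and shows $f$ is $d$-to-1, while the reverse inclusion $f(V) \subseteq \overline{D}_s(0)$ is the trivial bound $|f(x)| \le \max_i |a_i|r^i = s$. For (c) the identical computation with the strict inequalities $|c| < s$ and $|x| < r$ (which now force $|a_0| < s$, since $d' \ge 1$) shows $g$ has exactly $d'$ zeros in $V'$ and that $f(V') = D_s(0)$, with $f$ being $d'$-to-1 there.

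I expect the main obstacle to be the clean treatment of the boundary, namely establishing the counting principle for genuine power series (rather than polynomials) and correctly separating the closed-disc count $d$ from the open-disc count $d'$. This is exactly where Weierstrass preparation and the Newton polygon are indispensable, and where algebraic closedness of $K$ enters. The remaining technical points requiring care are the bookkeeping of multiplicities and the verification that subtracting a constant of absolute value at most $s$ (respectively strictly less than $s$) leaves the relevant dominant index intact, which is precisely what the hypotheses $d \ge 1$ and $d' \ge 1$ guarantee.
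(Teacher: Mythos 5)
Your proof is correct and follows essentially the route of the source: the paper itself gives no proof of this proposition, quoting it verbatim as Lemma 2.2 of Benedetto \cite{Benedetto:2003a}, whose argument likewise rests on Weierstrass preparation, the Newton-polygon count of zeros in closed versus open discs, and the observation that subtracting $c$ with $|c|\le s$ (resp.\ $|c|<s$) perturbs only the constant coefficient and so leaves the dominant index $d$ (resp.\ $d'$) intact once $d\ge 1$ (resp.\ $d'\ge 1$) is forced by the hypothesis $0\in f(V)$ (resp.\ $0\in f(V')$). The one point to tidy up is the case $r\notin|K^{*}|$, which the hypothesis $0<r\le R$ permits and where the rescaling into the Tate algebra is unavailable: there $V=V'$ and $d=d'$ automatically (if $d\ne d'$ then $r^{d-d'}\in|K^{*}|$, hence $r\in|K^{*}|$ by divisibility of the value group of an algebraically closed field), and the zero count reduces to the rational case via the monotonicity in $t$ of $\max\{i:|a_i|t^{i}=\max_{j}|a_j|t^{j}\}$.
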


We will consider the case $a_0=0$ in more detail. For our purpose,
it is then often more convenient to state Proposition
\ref{proposition-discdegree} in the following way.

\begin{proposition}\label{proposition one-to-one}
Let $K$ be algebraically closed and let
$h(x)=\sum_{i=1}^{\infty}c_i(x-\alpha )^i$ be a power series over
$K$.
\begin{enumerate}[1.]

 \item Suppose that $h$ converges on the rational closed disc
  $\overline{D}_R(\alpha)$. Let $0<r\leq R$ and suppose that
  \begin{equation}\label{ck inequality one-to-one}
   |c_i|r^i\leq |c_1|r\quad \text{ for all } i\geq 2 .
  \end{equation}
 Then, $h$ maps the open disc $D_{r}(\alpha )$ one-to-one onto
 $D_{|c_1|r}(0)$. Furthermore, if
 \[
 d = \max\{i\geq 1:|c_i|{r}^i=|c_1| r\},
 \]
 then $h$ maps the  closed disc $\overline{D}_{r}(\alpha )$ onto
 $\overline{D}_{|c_1|r}(0)$ exactly $d$-to-1 (counting
 multiplicity).

 \item Suppose that $h$ converges on the rational open disc
  $D_R(\alpha )$ (but not necessarily on the sphere $S_R(0)$).
  Let $0<r\leq R$ and suppose that
  \[
   |c_i|r^i \leq |c_1|r\quad \text{ for all } i\geq 2 .
  \]
  Then, $h$ maps $D_{r}(\alpha )$ one-to-one
  onto $D_{|c_1|r}(0)$.

\end{enumerate}

\end{proposition}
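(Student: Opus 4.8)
The plan is to deduce both parts directly from Benedetto's mapping theorem (Proposition \ref{proposition-discdegree}), exploiting the fact that the hypothesis $c_0=0$ forces the unique preimage of $0$ to be the centre $\alpha$.

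First I would treat part 1. Applying Proposition \ref{proposition-discdegree} to $f=h$ on $U=\overline{D}_R(\alpha)$ with the chosen $r\leq R$, I read off the three quantities $s,d,d'$. Because $c_0=0$ and the hypothesis $|c_i|r^i\leq |c_1|r$ $(i\geq 2)$ says the linear term dominates, the maximum $s=\max_{i\geq 0}|c_i|r^i$ equals $|c_1|r$; the largest index attaining it is exactly the $d$ in the statement, and the smallest such index is $d'=1$ (here one tacitly assumes $c_1\neq 0$, as otherwise the hypothesis forces $h\equiv 0$). Crucially, since $h(\alpha)=c_0=0$, the value $0$ lies in $h(\overline{D}_r(\alpha))$ and in $h(D_r(\alpha))$, so the hypotheses $0\in f(V)$ and $0\in f(V')$ of parts b and c hold automatically. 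Part c then gives that $h$ maps $D_r(\alpha)$ onto $D_s(0)=D_{|c_1|r}(0)$ exactly $d'=1$ times, i.e.\ one-to-one, and part b gives the $d$-to-1 statement onto $\overline{D}_{|c_1|r}(0)$ on the closed disc.

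For part 2 the only change is that $h$ is assumed to converge merely on the open disc $D_R(\alpha)$, whereas Proposition \ref{proposition-discdegree} requires convergence on a rational closed disc. Here I would use an exhaustion argument: writing $D_r(\alpha)=\bigcup_{r'<r} D_{r'}(\alpha)$ over radii $r'\in |K^*|$ with $r'<r$ — such radii are cofinal in $(0,r)$ because the value group of an algebraically closed $K$ is dense in $\mathbb{R}_{>0}$ — each $\overline{D}_{r'}(\alpha)$ is a rational closed disc contained in $D_R(\alpha)$ (as $r'<r\leq R$), on which $h$ therefore converges. The defining inequality is inherited under shrinking: from $|c_i|r^{i-1}\leq |c_1|$ and $r'\leq r$ one gets $|c_i|(r')^{i-1}\leq |c_1|$, hence $|c_i|(r')^i\leq |c_1|r'$ for $i\geq 2$. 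Thus part 1 applies on each $\overline{D}_{r'}(\alpha)$ and shows $h$ is injective on $D_{r'}(\alpha)$ with image $D_{|c_1|r'}(0)$. Taking the union as $r'\uparrow r$, injectivity on the nested pieces yields injectivity on $D_r(\alpha)$, and $\bigcup_{r'<r}D_{|c_1|r'}(0)=D_{|c_1|r}(0)$ identifies the image.

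I do not expect a genuine obstacle, since the substance is all contained in Benedetto's theorem; the proposition is essentially a convenient repackaging for the case $c_0=0$. The two points needing care are (i) verifying that the $c_0=0$ hypothesis is precisely what makes $0\in f(V)$ automatic, so that the degree counts $d'=1$ and $d$ translate into ``one-to-one'' and ``$d$-to-1'', and (ii) the monotonicity of the dominance condition under shrinking the radius, which is what lets the closed-disc conclusion of part 1 bootstrap, by exhaustion, to the open-disc-only hypothesis of part 2.
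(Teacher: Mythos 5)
Your proposal is correct and is essentially the paper's own route: the paper gives no separate proof, presenting Proposition \ref{proposition one-to-one} as a direct restatement of Proposition \ref{proposition-discdegree} for the case $c_0=0$, which is exactly the reduction you carry out (with $s=|c_1|r$, $d'=1$, and $0\in h(V')$ automatic since $h(\alpha)=0$). Your exhaustion over rational radii $r'<r$ for part 2 correctly supplies the detail the paper leaves implicit.
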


As a consequence of Proposition \ref{proposition-discdegree}, $f$
satisfies the following Lipschitz condition.

\begin{proposition}[Lemma 2.7 \cite{Benedetto:2003a}]\label{proposition lipschitz} Let $f$ be a
non-constant power series defined on a disc $U\subset K$ of radius
$r>0$, and suppose that $f(U)$ is a disc of radius $s>0$. Then for
any $x,y\in U$,
\[|f(x)-f(y)|\leq \frac{s}{r}|x-y|.\]
\end{proposition}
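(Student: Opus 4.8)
The plan is to combine a completely elementary telescoping bound for the difference $f(x)-f(y)$ with the identification of the radius $s$ of the image disc as the Gauss norm $M:=\sup_{i\ge 1}|a_i|r^i$, the latter being exactly the quantity produced by Proposition~\ref{proposition-discdegree}. I expand $f$ about the centre $\alpha$ of $U$, which is legitimate since holomorphicity does not depend on the chosen base point; writing $f(x)=\sum_{i\ge 0}a_i(x-\alpha)^i$, I have $|x-\alpha|,|y-\alpha|\le r$ for all $x,y\in U$.

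First I would establish the elementary estimate. Since the constant term cancels, $f(x)-f(y)=\sum_{i\ge 1}a_i\big((x-\alpha)^i-(y-\alpha)^i\big)$, and each power difference factors as $(x-\alpha)^i-(y-\alpha)^i=(x-y)\sum_{j=0}^{i-1}(x-\alpha)^j(y-\alpha)^{i-1-j}$. The ultrametric inequality bounds the inner sum in absolute value by $r^{i-1}$, whence $|(x-\alpha)^i-(y-\alpha)^i|\le |x-y|\,r^{i-1}$. Applying the strong triangle inequality term by term then gives $|f(x)-f(y)|\le |x-y|\,\sup_{i\ge 1}|a_i|r^{i-1}=\tfrac{|x-y|}{r}\,M$, where $M=\sup_{i\ge 1}|a_i|r^i$ is finite because $f$ converges on $U$.

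It then remains to show $M=s$, i.e.\@ that the radius of $f(U)$ equals the Gauss norm. Translating by $f(\alpha)$, I set $h(x)=f(x)-f(\alpha)=\sum_{i\ge 1}a_i(x-\alpha)^i$, so that $h(\alpha)=0$ and $h(U)=f(U)-f(\alpha)$ is a disc of the same radius $s$ centred at $0$. Then $M=\sup_{x\in U}|h(x)|$ is precisely the radius of $h(U)$: the bound $\sup_x|h(x)|\le M$ is immediate from ultrametricity, while the reverse inequality is the surjectivity assertion of Proposition~\ref{proposition-discdegree} (equivalently Proposition~\ref{proposition one-to-one}), namely that $h$ maps $U$ onto a disc of radius exactly $\max_i|a_i|r^i$, so that the Gauss norm is actually attained. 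Hence $M=s$, and combining this with the previous paragraph yields $|f(x)-f(y)|\le \tfrac{s}{r}|x-y|$. The reduction to $h$ is what removes the constant-term index $i=0$ from the supremum and so guarantees that the radius identification uses only the indices $i\ge 1$ appearing in the telescoping bound.

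The hard part will be this last identification, since Proposition~\ref{proposition-discdegree} is stated over an algebraically closed field whereas here $K$ is an arbitrary complete non-Archimedean field and $U$ may be open, closed, or irrational. I would handle this by base change to the completed algebraic closure $\widehat{\overline{K}}$ (e.g.\@ $\mathbb{C}_p$ in the $p$-adic setting): the coefficients $a_i$ lie in $K$, so neither $M=\sup_i|a_i|r^i$ nor the radius $s$ of the image changes under the extension, and over $\widehat{\overline{K}}$ the proposition applies directly to pin the image radius to $M$. One should also check that the open/closed distinction is immaterial, which it is because the supremum defining $M$ takes the same value on $D_r(\alpha)$ and $\overline{D}_r(\alpha)$.
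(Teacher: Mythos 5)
The paper itself offers no proof of this proposition: it is quoted verbatim as Lemma 2.7 of Benedetto \cite{Benedetto:2003a}, so there is no internal argument to compare yours against. Your proof is, in substance, the standard proof of that lemma, and it is correct in the setting where the paper actually uses it. The telescoping step is clean: $(x-\alpha)^i-(y-\alpha)^i=(x-y)\sum_{j=0}^{i-1}(x-\alpha)^j(y-\alpha)^{i-1-j}$ together with ultrametricity gives $|f(x)-f(y)|\leq \frac{|x-y|}{r}\sup_{i\geq 1}|a_i|r^i$, and the identification of $M=\sup_{i\geq 1}|a_i|r^i$ with the radius of $f(U)$ follows from the surjectivity clause of Proposition \ref{proposition-discdegree} applied to $h=f-f(\alpha)$ (for open or irrational $U$, apply it to an exhausting family of rational closed subdiscs $\overline{D}_{\rho}(\alpha)$, $\rho<r$, and pass to the supremum -- worth saying explicitly, since Proposition \ref{proposition-discdegree} as stated requires convergence on a rational closed disc). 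Only the inequality $M\leq s$ is actually needed, and that is exactly what the surjectivity supplies.

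The one step I would not let stand as written is the base-change paragraph. Over a field $K$ that is not algebraically closed, the ``radius'' of a disc is not intrinsic to the underlying set (in $\mathbb{Q}_p$ the set $p\mathbb{Z}_p$ is simultaneously $\overline{D}_{1/p}(0)$ and $D_1(0)$), and the $K$-rational image $f(U)$ can be genuinely smaller than the image of the corresponding disc over $\widehat{\overline{K}}$: the polynomial $f(x)=x^p-x$ maps $\mathbb{Z}_p$ onto $p\mathbb{Z}_p$ while mapping $\overline{D}_1(0)\subset\mathbb{C}_p$ onto $\overline{D}_1(0)$. So the claim that ``the radius $s$ of the image does not change under the extension'' is not literally true, and indeed with the reading $s=1/p$ the stated Lipschitz inequality itself fails for this $f$ ($|f(p)-f(0)|=1/p$, whereas $\frac{s}{r}|p-0|=1/p^2$). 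This is a defect of the proposition's statement in full generality rather than of your argument: in the paper it is only invoked inside proofs where $K$ is assumed algebraically closed (e.g.\@ Lemma \ref{lemma indiff fixed point disc}), and there your proof is complete. If you want the general-$K$ version, the honest fix is to interpret $s$ as the radius of the image of the corresponding disc over the completed algebraic closure (equivalently, as the Gauss norm $M$), which is what your first two paragraphs actually prove.
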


Also note the following non-Archimedean analogue of the Complex
Koebe $1/4$-Theorem.

\begin{proposition}[Non-Archimedean Koebe 1-Theorem, Lemma 3.5 \cite{Benedetto:2003a}]\label{proposition koebe}
Let $K$ be algebraically closed. Let $f$ be a power series over
$K$ which is convergent and one-to-one on a disc $U\subset K$,
with $0\in U$. Suppose that $f(0)=0$ and $f'(0)=1$. Then $f(U)=U$.
\end{proposition}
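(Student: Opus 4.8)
The plan is to read the radius of the image $f(U)$ directly from Benedetto's preparation theorem (Proposition \ref{proposition-discdegree}) and to use the hypothesis $|f'(0)|=1$ to force that radius to equal the radius of $U$. First I would normalize by ultrametricity: since every point of a non-Archimedean disc is a center, I may take $U$ to be centered at $0$, of radius $r$, and write $f(x)=\sum_{i\geq 1}a_ix^i$ with $a_1=f'(0)$, $|a_1|=1$ (the constant term vanishes because $f(0)=0$).

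I would first settle the rational closed case $U=\overline{D}_r(0)$. Since $0=f(0)\in f(U)$, Proposition \ref{proposition-discdegree}(b) (with $V=U$) says that $f$ maps $U$ onto $\overline{D}_s(0)$, where $s=\max_{i\geq 1}|a_i|r^i$, exactly $d$-to-$1$ with $d=\max\{i:|a_i|r^i=s\}$. The linear term alone contributes $|a_1|r=r$, so $s\geq r$; and if $s>r$ the maximum would have to be attained at some index $\geq 2$, giving $d\geq 2$ and contradicting injectivity. Hence $d=1$, so $s=|a_1|r=r$, and therefore $f(U)=\overline{D}_r(0)=U$.

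For the remaining cases, in which $U=D_r(0)$ is a rational open or an irrational disc, I would pass to the interior by rational closed subdiscs. For each $r'\in|K^*|$ with $0<r'<r$ the disc $\overline{D}_{r'}(0)$ lies inside the region of convergence and $f$ is one-to-one on it, so the closed case already gives $f(\overline{D}_{r'}(0))=\overline{D}_{r'}(0)$. Since $D_r(0)=\bigcup_{r'<r}\overline{D}_{r'}(0)$ and the image of a union is the union of the images, taking $r'\uparrow r$ yields $f(U)=D_r(0)=U$.

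The step I expect to be the main obstacle is the closed-disc core: verifying that the image is a disc centered at $0$ of the matching type, and extracting from Proposition \ref{proposition-discdegree} the fact that injectivity forces $d=1$, hence $s=|a_1|r=r$, so that the image radius is pinned to $r$. Once $s=r$ is established there, the open and irrational cases follow formally by the exhaustion argument, and the identity $f(U)=U$ is immediate.
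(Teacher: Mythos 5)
Your argument is correct and is essentially the one the paper relies on: the proposition is quoted from Benedetto, but the paper's own proof of the closely analogous Lemma \ref{lemma indiff fixed point disc} runs the identical route, namely using Proposition \ref{proposition-discdegree} to pin the image of a rational closed disc to a disc of the same radius (injectivity forcing $d=1$, hence $s=|f'(0)|\,r=r$), and then handling open and irrational discs by exhausting $U$ with rational closed subdiscs and taking the union of the images. No gaps.
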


If $f$ and $U$ satisfy the condition of the Keoebe theorem, then
by the Lipschitz condition in Proposition \ref{proposition
lipschitz}, $f:U \to U $ is not only bijective but also isometric.
We have the following lemma.

\begin{lemma}\label{lemma indiff fixed point disc}
Let $K$ be algebraically closed. Let $f$ be a power series over
$K$, which converges and is one-to-one on a disc $U\subset K$.
Suppose that there is an element $x_0\in U$ such that $f(x_0)=x_0$
and $|f'(x_0)|=1$. Then $f:U\to U$ is bijective and isometric.
\end{lemma}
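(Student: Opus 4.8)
The plan is to reduce to the normalized setting of the non-Archimedean Koebe theorem by an affine change of coordinates, deduce surjectivity (hence bijectivity) from Proposition~\ref{proposition koebe}, and then extract the isometry from the one-to-one hypothesis. Since every point of a non-Archimedean disc is a center, I may take $x_0$ to be the center of $U$, say $U=D_r(x_0)$ or $U=\overline{D}_r(x_0)$. Put $\lambda=f'(x_0)$, so $|\lambda|=1$, and define $h(x)=\lambda^{-1}\bigl(f(x_0+x)-x_0\bigr)$. Then $h$ is a power series on the disc $V=U-x_0$ (centered at $0$, of the same radius $r$), with $h(0)=0$ and $h'(0)=\lambda^{-1}f'(x_0)=1$; moreover $h$ is one-to-one on $V$, being the composite of the one-to-one $f$ with the isometric affine maps $x\mapsto x_0+x$ and $y\mapsto\lambda^{-1}(y-x_0)$.

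For bijectivity I would apply the Koebe Theorem (Proposition~\ref{proposition koebe}) to $h$ on $V$: since $0\in V$, $h(0)=0$ and $h'(0)=1$, it gives $h(V)=V$. Translating back, $f(U)=x_0+\lambda h(V)=x_0+\lambda V$, and because $|\lambda|=1$ scaling by $\lambda$ fixes every disc centered at $0$, so $\lambda V=V$ and $f(U)=x_0+V=U$. Thus $f:U\to U$ is onto, and being one-to-one by hypothesis, it is bijective.

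The real content is the isometry, i.e.\ the lower bound $|f(x)-f(y)|\ge|x-y|$; the upper bound is immediate, since $f(U)=U$ has the same radius $r$ as $U$, so Proposition~\ref{proposition lipschitz} yields $|f(x)-f(y)|\le(r/r)|x-y|=|x-y|$. I expect this reverse inequality to be the main obstacle. One clean route is to observe that the inverse $f^{-1}:U\to U$ is again a holomorphic bijection of the radius-$r$ disc onto itself, so Proposition~\ref{proposition lipschitz} applied to $f^{-1}$ gives $|f^{-1}(u)-f^{-1}(v)|\le|u-v|$; setting $u=f(x)$, $v=f(y)$ returns $|x-y|\le|f(x)-f(y)|$, and the two bounds combine to equality.

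A more self-contained route, which avoids invoking holomorphicity of the inverse, is a direct coefficient estimate on $h(x)=\sum_{i\ge1}c_ix^i$ (note $c_1=1$). Fix distinct $x,y\in V$ and set $\rho=\max(|x|,|y|)\in|K^*|$, so that $\overline{D}_\rho(0)\subseteq V$ is a rational closed disc on which $h$ converges and is one-to-one. Since $0=h(0)\in h(\overline{D}_\rho(0))$, the degree $d$ in Proposition~\ref{proposition-discdegree}(b) must equal $1$; this forces $s=|c_1|\rho=\rho$ to be the \emph{strict} maximum of $|c_i|\rho^i$, i.e.\ $|c_i|\rho^{\,i-1}<1$ for all $i\ge2$. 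Factoring $h(x)-h(y)=(x-y)\sum_{i\ge1}c_i\sum_{j=0}^{i-1}x^jy^{\,i-1-j}$, the $i=1$ term of the bracket equals $c_1=1$, while every term with $i\ge2$ has absolute value at most $|c_i|\rho^{\,i-1}<1$; by ultrametricity the bracket has absolute value exactly $1$, whence $|h(x)-h(y)|=|x-y|$. Transferring through the affine maps (isometries, as $|\lambda|=1$) gives $|f(x)-f(y)|=|x-y|$ for all $x,y\in U$, completing the proof.
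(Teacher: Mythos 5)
Your proof is correct, and its main line coincides with the paper's: the paper sets $h(x)=f(x)-x_0$, uses Proposition \ref{proposition-discdegree} to see that $h(U)$ is a disc of the same radius as $U$ (exhausting an open disc by rational closed ones containing $x_0$), concludes $f(U)=U$ since the two discs share a point, and then obtains the isometry by applying Proposition \ref{proposition lipschitz} to $h$ and to $h^{-1}$ --- which is exactly your first route, modulo your extra normalization $h'(0)=1$ that lets you quote the Koebe theorem (Proposition \ref{proposition koebe}) verbatim instead of invoking Proposition \ref{proposition-discdegree} directly. The genuine added value is your second route for the lower bound $|f(x)-f(y)|\geq |x-y|$: both the paper's argument and your first route tacitly assume that $h^{-1}$ is itself given by a convergent power series on $h(U)$, since Proposition \ref{proposition lipschitz} is stated only for power series; your factorization $h(x)-h(y)=(x-y)\bigl(1+\sum_{i\geq 2}c_i\sum_{j=0}^{i-1}x^jy^{\,i-1-j}\bigr)$, combined with the strict inequalities $|c_i|\rho^{\,i-1}<1$ forced by $d=1$ in Proposition \ref{proposition-discdegree}(b), yields $|h(x)-h(y)|=|x-y|$ directly by ultrametricity, with no appeal to the inverse. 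That makes the only delicate point of the lemma fully self-contained, at the modest cost of restricting to the rational closed subdisc $\overline{D}_\rho(0)$ with $\rho=\max(|x|,|y|)$, which you correctly verify lies inside $V$ in both the open and closed cases.
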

\begin{proof}
First, assume that $U$ is rational closed. Consider the function
$h(x)=f(x)-x_0$. By definition, $h$ is also one-to-one on $U$.
Moreover, $h(x_0)=0$ and $|h'(x_0)|=1$. Thus, in view of
Proposition \ref{proposition-discdegree}, $h(U)$ is a rational
closed disc and the radius of $h(U)$ is the same as that of $U$.
It follows that $f(U)$ is rational closed and that the radius of
$f(U)$ is the same as that of $U$. Because both $U$ and $f(U)$
contain $x_0$, we have $f(U)=U$. The remaining case is when $U$ is
open. Write $U$ as the union $\cup U_i$ of rational closed discs
containing $x_0$. Then $f(U)=\cup f(U_i)=\cup U_i=U$.

Next, we show that $f:U\to U$ is isometric. As the radius of
$h(U)$ is the same as that of $U$, we have by Proposition
\ref{proposition lipschitz} that $|h(x)-h(y)|\leq |x-y|$. On the
other hand, since $h:U\to h(U)$ is bijective, we have
\[
|x-y|=|h^{-1}\circ h(x)- h^{-1}\circ h(y)|\leq |h(x)-h(y)|.
\]
Consequently, $|h(x)-h(y)|=|x-y|$ so that $h:U\to h(U)$, and hence
$f:U\to U$, is isometric.
\end{proof}

In fact, a power series $f$ over a complete non-Archimedean field
$K$, is always one-to-one (and hence isometric) on some non-empty
disc about an indifferent fixed point $x_0\in K$. This is a
consequence of the local invertibility theorem
\cite{Schikhof:1984}. The maximal such disc is given by the
following lemma.

\begin{lemma}\label{lemma  f one-to-one}
Let $K$ be algebraically closed. Let $f\in K[[x]]$ be convergent
on some non-empty disc about $x_0\in K$. Suppose that $f(x_0)=x_0$
and $|f'(x_0)|=1$, and write
\[
f(x)=x_0+\lambda (x-x_0)+\sum_{i\geq 2}a_i(x-x_0)^i, \quad
a=\sup_{i\geq 2}|a_i|^{1/(i-1)}.
\]
Let $M$ be the largest disc, with $x_0\in M$, such that $f:M\to M$
is bijective (and hence isometric). Then $M=D_{1/a}(x_0)$ if
either $\max_{i\geq 2 }|a_i|^{1/(i-1)}$ is attained (as for polynomials) or $f$
diverges on $S_{1/a}(x_0)$. Otherwise,
$M=\overline{D}_{1/a}(x_0)$.
\end{lemma}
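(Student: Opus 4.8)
The plan is to determine exactly when $f:D_r(x_0)\to D_r(x_0)$ or $f:\overline{D}_r(x_0)\to\overline{D}_r(x_0)$ is bijective, by applying Proposition \ref{proposition one-to-one} to the shifted series $h(x)=f(x)-x_0=\lambda(x-x_0)+\sum_{i\geq 2}a_i(x-x_0)^i$, for which $c_1=\lambda$ with $|\lambda|=1$ and $c_i=a_i$. The key is the one-to-one criterion $|a_i|r^i\leq |\lambda|r=r$ for all $i\geq 2$, which is equivalent to $|a_i|^{1/(i-1)}\leq r^{-(i)/(i-1)}\cdot\ldots$; more cleanly, $|a_i|r^i\leq r$ rearranges to $|a_i|^{1/(i-1)}\leq 1/r$, i.e.\ $r\leq |a_i|^{-1/(i-1)}$. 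Taking the supremum over $i\geq 2$, the condition holds for all $i$ precisely when $r\leq 1/a$, where $a=\sup_{i\geq 2}|a_i|^{1/(i-1)}$. This is the central computation linking the abstract constant $a$ to the geometric bijectivity radius, and I would carry it out first.

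Next I would treat the open disc. For $r<1/a$ the strict inequality $r<|a_i|^{-1/(i-1)}$ holds for every $i$, so by part 2 of Proposition \ref{proposition one-to-one}, $h$ maps $D_r(x_0)$ one-to-one onto $D_{|\lambda|r}(0)=D_r(0)$, whence $f=h+x_0$ maps $D_r(x_0)$ bijectively onto $D_r(x_0)$; by Lemma \ref{lemma indiff fixed point disc} it is isometric. The boundary case $r=1/a$ is where the finer distinctions live, and I expect it to be the main obstacle. Here one must decide between the open and closed disc of radius $1/a$, and this is governed by whether the supremum defining $a$ is attained and by convergence on the critical sphere $S_{1/a}(x_0)$.

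For the closed disc $\overline{D}_{1/a}(x_0)$ to be mapped bijectively, two things are needed: first, $h$ must converge on $\overline{D}_{1/a}(x_0)$ (equivalently $|a_i|(1/a)^i\to 0$), and second, one needs $d=\max\{i\geq 1:|a_i|(1/a)^i=|\lambda|/a\}=1$ in part 1 of Proposition \ref{proposition one-to-one}, so that the closed disc is mapped $1$-to-$1$ rather than $d$-to-$1$ for some $d>1$. The quantity $|a_i|(1/a)^i=|\lambda|/a$ occurs exactly when $|a_i|^{1/(i-1)}=a$, i.e.\ when the supremum $a$ is attained at index $i$. Thus if the supremum \emph{is} attained at some $i\geq 2$, then $d\geq i\geq 2$, the closed-disc map is at least $2$-to-$1$, hence not injective, so $M$ cannot be the closed disc and must be the open disc $D_{1/a}(x_0)$; likewise if $f$ diverges on $S_{1/a}(x_0)$ the closed disc is not even in the domain, again forcing $M=D_{1/a}(x_0)$. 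Conversely, in the remaining case---the supremum is \emph{not} attained and $f$ converges on $S_{1/a}(x_0)$---every $i\geq 2$ satisfies the \emph{strict} inequality $|a_i|(1/a)^i<|\lambda|/a$, so $d=1$ and part 1 gives a genuine bijection (isometry by Lemma \ref{lemma indiff fixed point disc}) of the closed disc, yielding $M=\overline{D}_{1/a}(x_0)$. The final point to verify is maximality: for any $r>1/a$ some index $i$ violates the criterion, and by part b of Proposition \ref{proposition-discdegree} the map becomes at least $2$-to-$1$, so no larger disc can be bijective. Assembling these cases gives the stated dichotomy.
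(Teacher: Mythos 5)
Your proposal is correct and follows essentially the same route as the paper's own proof: reduce to $h(x)=f(x)-x_0$, apply Proposition \ref{proposition one-to-one} to get injectivity on $D_{1/a}(x_0)$, use the degree $d$ to split the boundary case according to whether $\sup_{i\geq 2}|a_i|^{1/(i-1)}$ is attained or $f$ diverges on $S_{1/a}(x_0)$, and rule out any strictly larger disc by exhibiting an index violating the criterion. The only cosmetic differences are that the paper applies the criterion directly at $r=1/a$ rather than exhausting by $r<1/a$, and phrases the maximality step via Proposition \ref{proposition one-to-one} rather than Proposition \ref{proposition-discdegree}(b).
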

\begin{proof}
Because $f$ is convergent, we must have
\[
a=\sup_{i\geq 2}|a_i|^{1/(i-1)}<\infty.
\]
Moreover, $f$ is certainly convergent on the open disc
$D_{1/a}(x_0)$. As in the proof of Lemma \ref{lemma indiff fixed
point disc}, it is sufficient to consider the mapping properties
of the map $h(x)=f(x)-x_0$.

First, in view of Proposition \ref{proposition one-to-one},
$h:D_{1/a}(x_0)\to D_{1/a}(0)$ is one-to-one, since by definition
\[
|a_i|(1/a)^i\leq 1/a= |a_1|(1/a).
\]

Second, if $h$ converges on the closed disc
$\overline{D}_{1/a}(x_0)$ and $\max_{i\geq 2 }|a_i|^{1/(i-1)}$ is
attained for some $i\geq 2$, then
\[d = \max\{i\geq
1:|a_i|{(1/a)}^i=|a_1| (1/a)\}\geq 2.
\]
By Proposition \ref{proposition one-to-one}, $h$ is not one-to-one
on $\overline{D}_{1/a}(x_0)$.

Third, if $h$ converges on the closed disc
$\overline{D}_{1/a}(x_0)$ and $\max_{i\geq 2 }|a_i|^{1/(i-1)}$ is
never attained. Then,
\[
|a_i|(1/a)^{i}<1/a=|a_1|(1/a),
\]
for all $i\geq 2 $, so that $d=1$. In other words,
$h:\overline{D}_{1/a}(x_0)\to \overline{D}_{1/a}(0)$ is one-to-one
in this case. However, $h$ cannot be one-to-one on any (rational)
disc strictly containing $\overline{D}_{1/a}(x_0)$; if
$r<a=\sup_{i\geq 2}|a_i|^{1/(i-1)}$, then $|a_N|^{1/(N-1)}\geq r$
and hence
\[
|a_N|(1/r)^{N}\geq 1/r=|a_1|(1/r),
\]
for some $N\geq 2$. This completes the proof.
\end{proof}



It follows from the proof above that if $f$ converges on the
sphere $S_{1/a}(x_0)$ but fails to be one-to-one there, then there
is a point $x\in S_{1/a}(x_0)$ such that $f(x)=x_0=f(x_0)$. This
is always the case when $f$ is a polynomial.

That $f$ may diverge on $S_{1/a}(x_0)$ follows since, for example, the
power series $f(x)=\lambda x + \sum_{i=2}^{\infty}(a_2)^{i-1}x^i$
converges if and only if $|x|<1/|a_2|=1/a$.

Furthermore, for every $x\in M$, $|f(x)-x_0|=|x-x_0|$ and hence
all spheres in $M$ are invariant under $f$.

\begin{remark}\label{remark Lemma bijective}
Recall that the discs $D_{1/a}(0)$ and $\overline{D}_{1/a}(0)$ are
rational if and only if $a=\sup_{i\geq 2}|a_i|^{1/(i-1)}\in |K|$.
If the maximum $a=\max_{i\geq 2}|a_i|^{1/(i-1)}$ exists, and $K$
is algebraically closed,  then $a\in|K|$. This is always the case
if $f$ is a polynomial. If $f$ is not a polynomial and the maximum
fails to exist we may have $\sup_{i\geq 2} |a_i|^{1/(i-1)}\notin
|K|$. Let $K=\mathbb{C}_p$. Let $\beta $ be an irrational number
and let $p_n/q_n$ be the $n$-th convergent of the continued
fraction expansion of $\beta$. Let the sequence $\{a_i\in
\mathbb{Q}_p\}_{i\geq 2}$ satisfy
\[
|a_i|= \left \{
\begin{array}{ll}
p^{p_n}, & \textrm{if \quad $i-1=q_n$ and $p_n/q_n<\beta $},\\
0, & \textrm{otherwise}.
\end{array}\right.
\]
Then,
\[
\sup_{i\geq 2} |a_i|^{1/(i-1)}=p^{\beta}\notin
|K|=\{p^r:r\in\mathbb{Q} \}\cup \{0\}.
\]
\end{remark}

For more information on non-Archimedean power series the reader
can consult
\cite{Schikhof:1984}.
From a dynamical point of view,  the paper \cite{Benedetto:2003a}
contains many useful results on non-Archimedean analogues of
complex analytic mapping theorems relevant for dynamics.

\subsection{Linearization discs}

The results above have some important implications for linearization 
discs. We use the following definition of a linearization disc. Let $K$
be a complete non-Archimedean field. Suppose that $f\in K[[x]]$
has an indifferent fixed point $x_0\in K$, with multiplier
$\lambda =f'(x_0)$, not a root of unity. By
\cite{Herman/Yoccoz:1981}, there is a unique formal power series
solution $g$, with $g(x_0)=0$ and $g'(x_0)=1$, to the following
form of the Schr\"{o}der functional equation
\[
g\circ f(x)=\lambda g(x).
\]
If the formal solution $g$ converges on some non-empty disc about
$x_0$, then the corresponding \emph{linearization disc} of $f$ about
$x_0$, denoted by $\Delta _f(x_0)$, is defined as the largest disc
$U\subset K$, with $x_0\in U$, such that the Schr\"{o}der
functional equation holds for all $x\in U$, and $g$ converges and
is one-to-one on $U$. We will often refer to $g$ as the
\emph{conjugacy function}.

This notion of a linearization disc is well-defined since
%
%
by the proof of  Lemma \ref{lemma f one-to-one}, there always
exist a largest disc on which $g$ is one-to-one (provided that $g$
is convergent). Also note that by the non-Archimedean Siegel
theorem \cite{Herman/Yoccoz:1981} and the fact that $\mathbb{C}_p$
is of characteristic zero, the formal solution $g$ always
converges if the state space $K=\mathbb{C}_p$.

As one might expect from previous results, both $f$ and the
conjugacy $g$ turn out to be one-to-one and isometric on a
non-Archimedean linearization disc.

\begin{lemma}\label{lemma linearization disc isometry}
Let $K$ be algebraically closed. Suppose that $f\in K[[x]]$ has a
linearization disc $\Delta_f(x_0)$ about $x_0\in K$. Let $g$, with
$g(x_0)=0$ and $g'(x_0)=1$, be the corresponding conjugacy
function. Then, both $g:\Delta_f(x_0)\to g(\Delta_f(x_0))$ and
$f:\Delta_f(x_0)\to \Delta_f(x_0)$ are bijective and isometric. In
particular,
if $x_0=0$, then $g(\Delta_f(x_0))=\Delta_f(x_0)$. Furthermore,
$\Delta_f(x_0)\subseteq M\subseteq \overline{D}_{1/a}(x_0)$, where
$M$ and $a$  are defined as in Lemma \ref{lemma  f one-to-one}.
\end{lemma}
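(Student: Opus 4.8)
The plan is to treat the two maps $g$ and $f$ separately, reducing each to results already established for power series that either vanish at or fix an indifferent point, and then to read off the remaining two assertions as corollaries.

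First I would handle $g$. By the definition of the linearization disc, $g$ converges and is one-to-one on $\Delta_f(x_0)$, and by normalization $g(x_0)=0$ with $g'(x_0)=1$, so $|g'(x_0)|=1$. These are exactly the hypotheses placed on the auxiliary function $h(x)=f(x)-x_0$ in the isometry part of the proof of Lemma \ref{lemma indiff fixed point disc}: a one-to-one power series that vanishes at $x_0$ and has derivative of absolute value $1$ there. Running that argument with $g$ in place of $h$---apply Proposition \ref{proposition-discdegree} to see that $g(\Delta_f(x_0))$ is a disc of the same radius as $\Delta_f(x_0)$, invoke Proposition \ref{proposition lipschitz} for $|g(x)-g(y)|\leq|x-y|$, and use bijectivity onto the image for the reverse inequality---gives that $g:\Delta_f(x_0)\to g(\Delta_f(x_0))$ is bijective and isometric. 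Since $g$ need not fix $x_0$, I would not appeal to the part of Lemma \ref{lemma indiff fixed point disc} that identifies the image with the domain; only the isometry portion transfers.

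Next I would handle $f$. The fixed point $x_0$ is indifferent, so $|f'(x_0)|=|\lambda|=1$, and $f$ is one-to-one on $\Delta_f(x_0)$: if $f(x)=f(y)$ for $x,y\in\Delta_f(x_0)$, then the Schr\"{o}der functional equation, which holds on the disc, gives $\lambda g(x)=g(f(x))=g(f(y))=\lambda g(y)$, whence $g(x)=g(y)$ and so $x=y$ by injectivity of $g$. With $f$ convergent, one-to-one, and fixing the indifferent point $x_0$, Lemma \ref{lemma indiff fixed point disc} applies directly and yields that $f:\Delta_f(x_0)\to\Delta_f(x_0)$ is bijective and isometric; in particular $f(\Delta_f(x_0))=\Delta_f(x_0)$.

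The two remaining claims are then immediate. If $x_0=0$, then $g$ is convergent and one-to-one on $\Delta_f(0)$ with $g(0)=0$ and $g'(0)=1$, so the non-Archimedean Koebe theorem (Proposition \ref{proposition koebe}) gives $g(\Delta_f(0))=\Delta_f(0)$. Finally, since $f:\Delta_f(x_0)\to\Delta_f(x_0)$ is bijective, $\Delta_f(x_0)$ is one of the discs on which $f$ is bijective, hence is contained in the largest such disc $M$ of Lemma \ref{lemma f one-to-one}; that lemma gives $M=D_{1/a}(x_0)$ or $M=\overline{D}_{1/a}(x_0)$, both contained in $\overline{D}_{1/a}(x_0)$, establishing $\Delta_f(x_0)\subseteq M\subseteq\overline{D}_{1/a}(x_0)$. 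The step I would watch most carefully is the isometry of $g$: one must notice that the relevant half of the proof of Lemma \ref{lemma indiff fixed point disc} uses only that the map vanishes at $x_0$, has unit-modulus derivative there, and is injective, rather than that it actually fixes $x_0$, so it applies to $g$ despite $g$ sending $x_0$ to $0$ instead of to itself. Everything else is a direct citation.
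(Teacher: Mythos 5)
Your proposal is correct and follows essentially the same route as the paper: the paper likewise deduces injectivity of $f$ on $\Delta_f(x_0)$ from the Schr\"{o}der relation together with injectivity of $g$, then cites Lemma \ref{lemma indiff fixed point disc} for $f$, and obtains the bijectivity and isometry of $g$ by rerunning the argument applied to $h$ in the proof of that lemma. Your explicit observation that only the ``vanishes at $x_0$, unit derivative, injective'' half of that argument is needed for $g$, and your filling in of the $x_0=0$ and $\Delta_f(x_0)\subseteq M\subseteq\overline{D}_{1/a}(x_0)$ claims, are exactly the details the paper leaves implicit.
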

\begin{proof}
By the conjugacy relation $g\circ f(x)=\lambda g(x)$ and the fact
that the map $g:\Delta _f(x_0)\to g(\Delta _f(x_0))$ is one-to-one,
$f:\Delta _f(x_0)\to \Delta _f(x_0)$ is also one-to-one and hence
bijective and isometric by Lemma \ref{lemma indiff fixed point
disc}.

Recall that $g(x_0)=0$ and $g'(x_0)=1$. That $g:\Delta_f(x_0)\to
g(\Delta_f(x_0))$ is bijective and isometric then follows by same
arguments as those applied to $h$, in the proof of Lemma
\ref{lemma indiff fixed point disc}.
\end{proof}

As a consequence, the radius of a linearization disc $\Delta_f(x_0)$ is
equal to to that of $g(\Delta_f(x_0))$. In particular, the radius
of a linearization disc is independent of the location of the fixed point
$x_0$. Therefore, we shall, without loss of generality, henceforth
assume that $x_0=0$.

The forthcoming sections are very much devoted to estimates of the
maximal disc on which $g$ is one-to-one. Before dealing with this
more delicate problem, note the following remark.

\begin{remark}
All the results in this and the previous section, except for Proposition
\ref{proposition-discdegree}, hold also in the case that $K$ is
not algebraically closed, with the modification that the mappings
are are one-to-one but not necessarily surjective. However, with
certain restrictions on the multiplier $\lambda$, e.g.\@
$f:\Delta_f(x_0)\cap \mathbb{Q}_p \to \Delta_f(x_0)\cap
\mathbb{Q}_p$ may also be surjective, see Corollary \ref{corollary
surjective in Qp}. In fact, as stated in Theorem
\ref{theorem-minimalitypreserved}, if $f$ is transitive on a
compact subset $X$ of a linearization disc, then $f(X)=X$.
\end{remark}

\subsection{The formal solution}
As noted in the previous section, we may, without loss of
generality,  assume that $f$ has its fixed point at the origin,
and that $f\in \mathcal{F}_{\lambda,a}$, as defined below. Let
$\lambda\in \mathbb{C}_p$ be such that
\begin{equation}
|\lambda|=1, \quad\textrm{but } \lambda ^n\neq 1,\quad  \forall
n\geq 1,
\end{equation}
and let $a$ be a real number. We shall associate with the pair
$(\lambda,a)$ a family $\mathcal{F}_{\lambda,a}$ of power series
defined by
\begin{equation}
\mathcal{F}_{\lambda,a}:=\left\{\lambda x
+\sum a_ix^i\in\mathbb{C}_p[[x]]:a=\sup_{i\geq 2}|a_i|^{1/(i-1)}\right\}.
\end{equation}
It follows that each $f\in \mathcal{F}_{\lambda,a}$ is convergent
on $D_{1/a}(0)$, and by Lemma \ref{lemma  f one-to-one}
$f:D_{1/a}(0)\to D_{1/a}(0)$ is bijective and isometric.

As $\mathbb{C}_p$ is of characteristic zero, we may, by the
non-Archimedean Siegel theorem \cite{Herman/Yoccoz:1981},
associate with $f$ a unique convergent power series solution $g$
to the Scr\"{o}der functional equation, of the form
\[
g(x)=x + \sum_{k\geq 2}b_kx^k,
\]
and a corresponding linearization disc about the origin
\[
\Delta_f:=\Delta _f(0).
\]
Recall that by Lemma \ref{lemma linearization disc isometry}, since
$x_0=0$, the linearization disc $\Delta_f$ is the largest disc $U\subset
\mathbb{C}_p$ about the origin such that the full conjugacy
$g\circ f \circ g^{-1}(x)=\lambda x$ holds for all $x\in U$.

\newpage
Given $f\in \mathcal{F}_{\lambda,a}$, Lemma \ref{lemma linearization disc
isometry} yields the following concerning $\Delta_f$.

\begin{lemma}\label{lemma upper bound Siegel and isometry}
Let $f\in \mathcal{F}_{\lambda,a}$. Then $f$ has a linearization disc
$\Delta_f$ about the origin in $\mathbb{C}_p$. Let $g$, with
$g(0)=0$ and $g'(0)=1$, be the corresponding conjugacy function.
Then, the following two statements hold:
\begin{enumerate}[1)]

   \item Both $g:\Delta_f\to \Delta_f$ and $f:\Delta_f\to \Delta_f$ are bijective
   and isometric.

   \item $\Delta_f\subseteq \overline{D}_{1/a}(0)$. If $a=\max_{i\geq 2}
   |a_i|^{1/(i-1)}$ or $f$ diverges on the sphere $S_{1/a}(0)$, then $\Delta_f\subseteq D_{1/a}(0)$.

\end{enumerate}
\end{lemma}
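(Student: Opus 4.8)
The plan is to obtain the lemma as a direct specialization of the structural results already in hand---Lemma \ref{lemma linearization disc isometry} and Lemma \ref{lemma f one-to-one}---to the field $K=\mathbb{C}_p$ with fixed point at $x_0=0$. Since $\mathbb{C}_p$ is algebraically closed, those lemmas apply without the surjectivity caveat, so the only real work is to verify that their hypotheses hold for an arbitrary $f\in\mathcal{F}_{\lambda,a}$; I would begin there.

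The first step is existence of $\Delta_f$. Since $f\in\mathcal{F}_{\lambda,a}$ has $|\lambda|=1$ with $\lambda$ not a root of unity and $\mathbb{C}_p$ has characteristic zero, the non-Archimedean Siegel theorem of Herman and Yoccoz \cite{Herman/Yoccoz:1981} guarantees that the unique formal conjugacy $g(x)=x+\sum_{k\geq 2}b_kx^k$ converges on some non-empty disc about the origin. As in the discussion of linearization discs above, convergence of $g$ then makes $\Delta_f$ well-defined: the argument in the proof of Lemma \ref{lemma f one-to-one}, applied to $g$ (which satisfies $g(0)=0$ and $|g'(0)|=1$), produces a largest disc on which $g$ is one-to-one, and this disc is $\Delta_f$.

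The second step settles part (1). Because $\mathbb{C}_p$ is algebraically closed and $x_0=0$, Lemma \ref{lemma linearization disc isometry} applies verbatim and yields that $g:\Delta_f\to g(\Delta_f)$ and $f:\Delta_f\to\Delta_f$ are bijective and isometric, together with the identity $g(\Delta_f)=\Delta_f$ coming from the fixed point sitting at the origin. Combining these gives that $g:\Delta_f\to\Delta_f$ is a bijective isometry, which is precisely assertion (1).

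The third step settles part (2) by combining the chain $\Delta_f\subseteq M\subseteq\overline{D}_{1/a}(0)$ furnished by Lemma \ref{lemma linearization disc isometry} with the explicit determination of $M$ in Lemma \ref{lemma f one-to-one}. The latter gives $M=\overline{D}_{1/a}(0)$ in general, hence $\Delta_f\subseteq\overline{D}_{1/a}(0)$; and it gives $M=D_{1/a}(0)$ exactly when the supremum $a=\max_{i\geq 2}|a_i|^{1/(i-1)}$ is attained or $f$ diverges on $S_{1/a}(0)$, yielding the sharper inclusion $\Delta_f\subseteq D_{1/a}(0)$ in those cases. I expect no serious obstacle: every claim is a restatement of an earlier result, and the only point demanding a little care is the well-definedness of $\Delta_f$ in the first step, which is handled as in the proof of Lemma \ref{lemma f one-to-one} once the finiteness of $\sup_{k\geq 2}|b_k|^{1/(k-1)}$ is recorded from the convergence of $g$.
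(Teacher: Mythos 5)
Your proposal is correct and follows essentially the same route as the paper, which states this lemma without a separate proof as a direct consequence of Lemma \ref{lemma linearization disc isometry} (hence of Lemma \ref{lemma  f one-to-one}) specialized to $K=\mathbb{C}_p$ and $x_0=0$, with existence and convergence of $g$ supplied by the Herman--Yoccoz theorem. Your filling-in of the details --- part (1) from the bijective isometry statement plus $g(\Delta_f)=\Delta_f$, and part (2) from the chain $\Delta_f\subseteq M\subseteq \overline{D}_{1/a}(0)$ together with the determination of $M$ --- is exactly what the paper intends.
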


Our results on lower bounds for linearization discs are based on the
following lemma.

\begin{lemma}\label{lemma bk estimate indiff}
Let $f\in \mathcal{F}_{\lambda,a}$. Then, the coefficients of the
conjugacy function $g$ satisfy
\begin{equation}\label{b_k estimate by prod 1-lambda n}
|b_k|\leq \left( \prod_{n=1}^{k-1}|1-\lambda ^n| \right
)^{-1}a^{k-1},
\end{equation}
for all $k\geq 2$.
\end{lemma}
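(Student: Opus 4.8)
The plan is to prove the coefficient bound by deriving a recurrence for the $b_k$ directly from the Schröder functional equation and then bounding each term using the ultrametric inequality. The Schröder functional equation $g\circ f(x)=\lambda g(x)$ is the only relation available, so I would first substitute the expansions $g(x)=x+\sum_{k\geq 2}b_k x^k$ and $f(x)=\lambda x+\sum_{i\geq 2}a_i x^i$ into it and compare coefficients of $x^k$ on both sides. The right-hand side contributes $\lambda b_k$ to the coefficient of $x^k$ (with $b_1=1$). On the left-hand side, the coefficient of $x^k$ in $g(f(x))=\sum_{j\geq 1}b_j f(x)^j$ will produce a term $b_k\lambda^k$ coming from $j=k$ (using the leading $\lambda x$ part of $f$), plus a sum of lower-order products $b_j$ times convolutions of the coefficients $a_i$ of $f$. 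Isolating $b_k$ then gives a recurrence of the schematic form
\begin{equation*}
(\lambda-\lambda^k)b_k = \sum_{j=1}^{k-1} b_j \cdot (\text{coefficient of } x^k \text{ in } f(x)^j),
\end{equation*}
so that $|b_k| \leq |1-\lambda^{k-1}|^{-1}\cdot|\lambda|^{-1}$ times the max over $j<k$ of $|b_j|$ times the relevant coefficient norm, using $|\lambda|=1$ and $|\lambda-\lambda^k|=|\lambda^k||\lambda^{1-k}-1|=|1-\lambda^{k-1}|$.

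Next I would set up an induction on $k$ with the claimed bound $|b_k|\leq\left(\prod_{n=1}^{k-1}|1-\lambda^n|\right)^{-1}a^{k-1}$ as the inductive hypothesis. The key structural fact I want to exploit is that the coefficient of $x^k$ in $f(x)^j$ is a sum of products $a_{i_1}\cdots a_{i_j}$ with $i_1+\cdots+i_j=k$, and the definition $a=\sup_{i\geq 2}|a_i|^{1/(i-1)}$ gives $|a_i|\leq a^{i-1}$ for every $i$ (this also holds for $i=1$ trivially since $|a_1|=|\lambda|=1=a^0$). Hence each monomial in the convolution has absolute value at most $a^{(i_1-1)+\cdots+(i_j-1)}=a^{k-j}$, and by the ultrametric inequality the whole coefficient of $x^k$ in $f(x)^j$ is bounded by $a^{k-j}$. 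Combining this with the inductive bound $|b_j|\leq\left(\prod_{n=1}^{j-1}|1-\lambda^n|\right)^{-1}a^{j-1}$ yields, for each $j<k$, a contribution bounded by $\left(\prod_{n=1}^{j-1}|1-\lambda^n|\right)^{-1}a^{k-1}$, and then dividing by $|1-\lambda^{k-1}|$ gives the desired product up to index $k-1$.

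The main obstacle is the bookkeeping needed to show that the product $\prod_{n=1}^{j-1}|1-\lambda^n|$ appearing after using the hypothesis for $b_j$ is dominated by the full product $\prod_{n=1}^{k-1}|1-\lambda^n|$, since the factors $|1-\lambda^n|\leq 1$ are all at most one in absolute value. Because each such factor is $\leq 1$, extending the product from index $j-1$ up to $k-2$ only makes the reciprocal larger, so $\left(\prod_{n=1}^{j-1}|1-\lambda^n|\right)^{-1}\leq\left(\prod_{n=1}^{k-2}|1-\lambda^n|\right)^{-1}$; then the final factor $|1-\lambda^{k-1}|^{-1}$ from the recurrence supplies exactly the missing $n=k-1$ term, assembling the full product $\prod_{n=1}^{k-1}$. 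I would verify that the worst case over $j$ is indeed $j=k-1$ (equivalently, that the bound is uniform in $j$ after these monotonicity estimates) and confirm the base case $k=2$ directly, where the recurrence gives $(\lambda-\lambda^2)b_2=b_1 a_2$, so $|b_2|=|1-\lambda|^{-1}|a_2|\leq|1-\lambda|^{-1}a$, matching the claim.
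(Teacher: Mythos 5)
Your proposal is correct and follows essentially the same route as the paper: the same recurrence for $b_k$ obtained by comparing coefficients in the Schr\"oder equation, the bound $|a_i|\leq a^{i-1}$ combined with ultrametricity to control the convolution coefficients of $f(x)^j$, and induction on $k$. If anything, you are slightly more explicit than the paper about the bookkeeping step where $\left(\prod_{n=1}^{j-1}|1-\lambda^n|\right)^{-1}\leq\left(\prod_{n=1}^{k-2}|1-\lambda^n|\right)^{-1}$ because each factor $|1-\lambda^n|\leq 1$.
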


\begin{proof} The coefficients of the conjugacy $g$ must
satisfy the recurrence relation
\begin{equation}\label{bk-equation}
b_k=\frac{1}{\lambda (1-\lambda^{k-1})}\sum_{l=1}^{k-1}b_l%
(\sum\frac{l!}{\alpha_1!\cdot ...\cdot
\alpha_k!}a_1^{\alpha_1}\cdot ...\cdot a_k^{\alpha_k})
\end{equation}
where $\alpha _1,\alpha _2,\dots,\alpha _k$ are nonnegative
integer solutions of
\begin{equation}\label{index-equations}
   \left\{\begin{array}{ll}
            \alpha_1+...+\alpha_k=l,\\
            \alpha_1+2\alpha_2...+k\alpha_k=k,\\
            1\leq l\leq k-1.
        \end{array}
   \right.
\end{equation}
Note that the factorial factors
$l!/\alpha_1!\cdot\dots\cdot\alpha_k!$ are always integers and
thus of modulus less than or equal to $1$. Also recall that
$|a_i|\leq a^{i-1}$.  It follows that
\[
|b_k|\leq \left( \prod_{n=1}^{k-1}|1-\lambda ^n| \right )^{-1}
a^{\alpha},
\]
for some integer $\alpha$. In view of equation
(\ref{index-equations}) we have
\[
\sum_{i=2}^{k}(i-1)\alpha_i=k-l.
\]
Consequently, since $|a_i|\leq a^{i-1}$, we obtain
\begin{equation}\label{estimate prod a_i}
\prod_{i=2}^k|a_i|^{\alpha_i}\leq
\prod_{i=2}^{k}a^{(i-1)\alpha_i}=a^{k-l}.
\end{equation}
Now we use induction over $k$. By definition $b_1=1$ and,
according to the recursion formula (\ref{bk-equation}), $|b_2|\leq
|1-\lambda |^{-1}|a_2|\leq |1-\lambda |^{-1}|a|$. Suppose that
\[
|b_l|\leq \left( \prod_{n=1}^{l-1}|1-\lambda ^n| \right
)^{-1}a^{l-1}
\]
for all $l<k$. Then
\[
|b_k|\leq \left( \prod_{n=1}^{k-1}|1-\lambda ^n| \right
)^{-1}a^{l-1}\max\left\{ \prod_{i=2}^k|a_i|^{\alpha_i}\right \},
\]
and the lemma follows by the estimate (\ref{estimate prod a_i}).
\end{proof}

In the following sections we show how to calculate the distance
$|1-\lambda ^n |$ for an arbitrary integer $n\geq 1$. Applying
Proposition \ref{proposition one-to-one} to the estimate in the
above lemma we can then estimate the disc on which the conjugacy
function $g$ is one-to-one.

\subsection{Geometry of the unit sphere and the roots of unity}\label{section geometry and 
roots of uni ty C p}

Let $\Gamma$ be the group of all roots of unity in $\mathbb{C}_p$.
It has the important subgroup $\Gamma_u$ ($u$ for unramified),
given by
\begin{equation}\label{Gammau}
 \Gamma_u=\{\xi\in\mathbb{C}_p:\textrm{ } \xi ^m=1\textrm{
for some $m$ not divisible by $p$}\}.
\end{equation}

\begin{proposition}[$\Gamma_u$ and the unit sphere]\label{proposition gamma u}
The unit sphere $S_1(0)$ in $\mathbb{C}_p$ decomposes into the
disjoint union
\[
S_1(0)=\cup _{\xi \in \Gamma_u}D_1(\xi).
\]
In particular, $\Gamma_u\cap D_1(1)=\{ 1\}$ and consequently
$|1-\xi|=1$ for all $\xi \neq 1$. To each $\lambda \in S_1(0)$
there is a unique $\xi\in \Gamma_u$ and $h\in D_1(1)$ such that
$\lambda =\xi h$.
\end{proposition}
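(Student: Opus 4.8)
The plan is to translate the geometric statement into the language of the residue field and then invoke Teichm\"uller lifting. Recall from Section~\ref{section preliminaries} that the residue field of $\mathbb{C}_p$ is $k=\overline{\mathbb{F}}_p$, and that for $\lambda,\xi\in\mathcal{O}$ with $|\xi|=1$ one has $|\lambda-\xi|<1$ exactly when $\overline{\lambda}=\overline{\xi}$, and $|\lambda-\xi|=1$ otherwise. Thus the first move is to record the dictionary $D_1(\xi)=\{\lambda\in\mathcal{O}:\overline{\lambda}=\overline{\xi}\}$, so that $D_1(\xi)$ is precisely the residue class of $\overline{\xi}$, while $S_1(0)=\mathcal{O}^\times$ is the full preimage under reduction of $k^\times=\overline{\mathbb{F}}_p\setminus\{0\}$.

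The heart of the argument is to show that reduction restricts to a bijection $\Gamma_u\to\overline{\mathbb{F}}_p^\times$. Given $\overline{c}\in\overline{\mathbb{F}}_p^\times$, it lies in some finite subfield $\mathbb{F}_{p^n}$ and hence is a root of $X^{p^n-1}-1$; since $p^n-1$ is coprime to $p$, the derivative $(p^n-1)X^{p^n-2}$ is a unit at $\overline{c}\neq 0$, so Hensel's lemma (see \cite{Gouvea:1997}) produces a unique $\xi\in\mathcal{O}$ with $\xi^{p^n-1}=1$ and $\overline{\xi}=\overline{c}$. The relation $\xi^{p^n-1}=1$ forces $\xi\in\Gamma_u$, and any other element of $\Gamma_u$ reducing to $\overline{c}$ is a root of the same polynomial and so coincides with $\xi$ by the uniqueness of the Hensel lift. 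This gives the asserted bijection and thereby existence and uniqueness of the Teichm\"uller representative of each residue class.

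From this the decomposition is immediate. For $\lambda\in S_1(0)$, the class $\overline{\lambda}\in\overline{\mathbb{F}}_p^\times$ has a unique lift $\xi\in\Gamma_u$, and then $\lambda\in D_1(\xi)$; conversely, for distinct $\xi,\xi'\in\Gamma_u$ one has $\overline{\xi}\neq\overline{\xi'}$, hence $|\xi-\xi'|=1$, and two open unit discs meeting in a point would satisfy $|\xi-\xi'|<1$ by ultrametricity, a contradiction, so the discs are disjoint. This yields $S_1(0)=\bigcup_{\xi\in\Gamma_u}D_1(\xi)$ as a disjoint union. In particular, the only element of $\Gamma_u$ lying in $D_1(1)$ is the lift of $\overline{1}=1$, namely $1$ itself, so $\Gamma_u\cap D_1(1)=\{1\}$ and $|1-\xi|=1$ for every $\xi\in\Gamma_u\setminus\{1\}$.

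Finally, for the multiplicative factorization I would take, for $\lambda\in S_1(0)$, the Teichm\"uller lift $\xi\in\Gamma_u$ of $\overline{\lambda}$ and set $h=\lambda\xi^{-1}$; then $|h|=1$ and $\overline{h}=\overline{\lambda}\,\overline{\xi}^{-1}=1$, so $h\in D_1(1)$ and $\lambda=\xi h$. Uniqueness follows by reduction: if $\lambda=\xi h=\xi' h'$ with $\xi,\xi'\in\Gamma_u$ and $h,h'\in D_1(1)$, then $\overline{\lambda}=\overline{\xi}=\overline{\xi'}$, whence $\xi=\xi'$ by the bijection and then $h=h'$. The only genuinely substantive ingredient is the Teichm\"uller lifting via Hensel's lemma; the remaining steps are routine manipulations with the ultrametric inequality, and the one point needing care is verifying that the Hensel lift actually lands in $\Gamma_u$, which is automatic from the defining relation $\xi^{p^n-1}=1$ with $p^n-1$ prime to $p$.
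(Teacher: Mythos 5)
Your overall strategy---identify $D_1(\xi)$ with the residue class of $\overline{\xi}$ and show that reduction is a bijection from $\Gamma_u$ onto $\overline{\mathbb{F}}_p^{\times}$ via Hensel lifting---is the standard Teichm\"uller-representative argument, and it is essentially what the reference the paper points to (Schikhof, p.~103) contains; the paper itself offers no proof beyond that citation. The surjectivity half of your bijection, the disjointness of the discs, the identification $S_1(0)=\mathcal{O}^{\times}$, and the factorization $\lambda=\xi h$ together with its uniqueness are all correct as written.

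The one step that does not hold up as stated is the injectivity of reduction on $\Gamma_u$: you claim that any other element of $\Gamma_u$ reducing to $\overline{c}$ ``is a root of the same polynomial'' $X^{p^n-1}-1$ and hence coincides with $\xi$ by uniqueness of the Hensel lift. A priori an element $\xi'\in\Gamma_u$ with $\overline{\xi'}=\overline{c}$ only satisfies $(\xi')^{m}=1$ for some $m$ prime to $p$ that need not divide $p^n-1$, so it need not be a root of that particular polynomial, and the uniqueness clause of Hensel's lemma says nothing about it. This matters because the injectivity is exactly the substantive assertion $\Gamma_u\cap D_1(1)=\{1\}$ (equivalently $|1-\xi|=1$ for all $\xi\in\Gamma_u\setminus\{1\}$), so it cannot be absorbed into the lifting step without argument. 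The repair is short: either apply Hensel's lemma to $X^{M}-1$ with $M=\mathrm{lcm}(m,p^n-1)$, which is still prime to $p$ and hence has separable reduction, so that $\xi$ and $\xi'$ are both among its $M$ roots and reduction is injective on that set of roots; or argue directly that if $\eta=\xi'/\xi\in\Gamma_u$ satisfies $\eta\neq 1$, $\eta^{m}=1$ with $p\nmid m$, and $\overline{\eta}=1$, then $0=\eta^{m-1}+\cdots+\eta+1$ reduces to $m=0$ in $\overline{\mathbb{F}}_p$, contradicting $p\nmid m$. With that patch your proof is complete and self-contained, which is more than the paper provides.
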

\begin{proof}
See \cite[p. 103]{Schikhof:1984}.
\end{proof}
Let us note that $\Gamma_u$ is isomorphic to the multiplicative
subgroup $\overline{\mathbb{F}}_p\setminus\{0\}$ of the residue
field in $\mathbb{C}_p$.

Another important subgroup of $\Gamma$ is $\Gamma_r$ ($r$ for
ramified), given by
\begin{equation}\label{Gammar}
\Gamma_r=\{\zeta\in\mathbb{C}_p:\textrm{ }\zeta^{p^s}=1 \textrm{
for some integer $s\geq 0$}\}.
\end{equation}
By elementary group theory $\Gamma _u\cap\Gamma _r =\{1\}$ and
$\Gamma =\Gamma _u\times\Gamma _r$, see e.g.\@\@ the paper \cite[p.
103]{Schikhof:1984}). Most importantly, the $p^s$th roots of unity
in $\Gamma_r $ are located on spheres about the point $x=1$ of
radius $R(t)$, where
\begin{equation}\label{definition R(s) extended}
R(t):=\left\{\begin{array}{ll}
0, & \textrm{if \quad $t=0$,}\\
p^{-\frac{1}{p^{t-1}(p-1)}}, & \textrm{if \quad $t\geq 1$.}
\end{array}\right.
\end{equation}
This fact is fundamental for our estimates of linearization discs.

\begin{proposition}[The geometry of $\Gamma_r$]\label{proposition gamma r}
$\Gamma_r\subset D_1(1)$. If $\zeta\in\Gamma_r$ is a primitive
$p^s$th root of unity for some $s\geq 0$, then
\[
|1-\zeta|=R(s).
\]
Moreover, if $\zeta_1,\zeta_2\in S_{R(s)}(1)$ and
$\zeta_1\neq\zeta_2$, then
\[
|\zeta_1-\zeta_2|=R(s).
\]
Furthermore, for each $s\geq 1$, there are $p^s-p^{s-1}$ different
roots of unity on the sphere $S_{R(s)}(1)$.
\end{proposition}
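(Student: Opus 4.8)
The plan is to derive all four assertions from the single key computation $|1-\zeta| = R(s)$ for a primitive $p^s$-th root of unity $\zeta$, which is the heart of the proposition. I would first recall that the primitive $p^s$-th roots of unity are precisely the roots of the cyclotomic polynomial
\[
\Phi_{p^s}(X) = \frac{X^{p^s}-1}{X^{p^{s-1}}-1} = 1 + X^{p^{s-1}} + X^{2p^{s-1}} + \cdots + X^{(p-1)p^{s-1}},
\]
of which there are $\phi(p^s) = p^s - p^{s-1}$. These roots form a single conjugacy class over $\mathbb{Q}_p$, the extension $\mathbb{Q}_p(\zeta)/\mathbb{Q}_p$ being totally ramified of degree $\phi(p^s)$; since the absolute value on $\mathbb{C}_p$ is invariant under $\mathbb{Q}_p$-conjugation, all primitive $p^s$-th roots share the common value $|1-\zeta|$.

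Given this, I would evaluate $\Phi_{p^s}(1) = p$ directly from the expanded form above, and combine it with the factorization $\Phi_{p^s}(X) = \prod_\zeta (X-\zeta)$ over the primitive roots to get $\prod_\zeta (1-\zeta) = p$. Taking absolute values and using that all $\phi(p^s)$ factors are equal yields $|1-\zeta|^{p^{s-1}(p-1)} = |p| = p^{-1}$, hence $|1-\zeta| = p^{-1/(p^{s-1}(p-1))} = R(s)$, which is the second assertion. The first assertion, $\Gamma_r \subset D_1(1)$, then follows immediately, since $R(s) < 1$ for every $s \geq 1$ while $|1-1| = 0 = R(0)$. The count in the final assertion is likewise quick: every primitive $p^s$-th root lies on $S_{R(s)}(1)$, there are $p^s - p^{s-1}$ of them, and conversely any root of unity on $S_{R(s)}(1)$ must lie in $\Gamma_r$ --- writing it as $\xi h$ with $\xi \in \Gamma_u$ and $h \in \Gamma_r$ as in Proposition \ref{proposition gamma u}, the disjointness of the discs $D_1(\xi)$ forces $\xi = 1$ --- and then the value computation identifies it as a primitive $p^s$-th root.

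For the pairwise-distance assertion I would reduce again to the value computation: for distinct $\zeta_1,\zeta_2 \in S_{R(s)}(1)$ one has $\zeta_1 - \zeta_2 = \zeta_2(\omega - 1)$ with $\omega = \zeta_1\zeta_2^{-1}$ a $p^s$-th root of unity, so that $|\zeta_1 - \zeta_2| = |\omega - 1|$; if $\omega$ has exact order $p^t$ then $|\omega - 1| = R(t) \leq R(s)$. The main obstacle is precisely here, and it is genuinely delicate: the equality $|\zeta_1-\zeta_2| = R(s)$ holds only when $\omega$ is again primitive of order $p^s$, i.e.\ when $t = s$, and this can fail for $s \geq 2$ (two primitive $p^s$-th roots whose ratio happens to be a primitive $p$-th root sit at distance $R(1) < R(s)$). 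Consequently the content I would establish unconditionally is the ultrametric upper bound $|\zeta_1 - \zeta_2| \leq R(s)$ together with its attainment; the universal equality is valid without restriction only for $s = 1$, where every nontrivial ratio of primitive $p$-th roots is itself a primitive $p$-th root.
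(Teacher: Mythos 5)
Your proposal is correct where the proposition itself is correct, and it is necessarily a different route from the paper's, because the paper gives no proof at all for this statement --- it only points to the literature (``See for example Escassut''). Your self-contained derivation is the standard one: $\Phi_{p^s}(1)=p$ combined with the factorization $\Phi_{p^s}(X)=\prod_\zeta(X-\zeta)$ and the Galois-invariance of the absolute value on $\mathbb{C}_p$ gives $|1-\zeta|^{p^{s-1}(p-1)}=p^{-1}$, hence $|1-\zeta|=R(s)$; the inclusion $\Gamma_r\subset D_1(1)$ and the count of roots on $S_{R(s)}(1)$ then follow exactly as you say, using the decomposition of Proposition \ref{proposition gamma u} and the strict monotonicity of $s\mapsto R(s)$. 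What your approach buys over the citation is transparency --- and, crucially, it exposes the fact that the statement being cited is not quite right.

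Indeed, your reservation about the pairwise-distance assertion is well founded: as stated, that part of the proposition is false for every $s\ge 2$. Concretely, for $p=3$ let $\zeta$ be a primitive $9$th root of unity; then $\zeta$ and $\zeta^4$ are both primitive $9$th roots, hence both lie on $S_{R(2)}(1)$, yet
\[
|\zeta-\zeta^4|=|\zeta|\cdot|1-\zeta^{3}|=R(1)=3^{-1/2}<3^{-1/6}=R(2),
\]
and the same phenomenon occurs for $p=2$, where $|i-(-i)|=|2|=R(1)<R(2)$. The correct statement is the one you prove: $|\zeta_1-\zeta_2|=R(t)$ where $p^t$ is the exact order of $\zeta_1\zeta_2^{-1}$, so that only the ultrametric bound $|\zeta_1-\zeta_2|\le R(s)$ holds in general, with unconditional equality only when $s=1$. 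This is not a cosmetic defect of the proposition: its false form is invoked verbatim in the proof of Lemma \ref{lemma distance 1-lambda m char 0,p} (``$|\lambda-\zeta|=p^{-r_s}$ for all $\zeta\neq\alpha$ on the sphere $S_{p^{-r_s}}(1)$''), so the formula given there in the boundary case $|1-\lambda^m|=R(s)$, and everything downstream of it (the definition of $\sigma_3$ in Case III), needs to be re-examined for $s\ge 2$. Your proof is the correct one; the proposition, not your argument, is what requires amendment.
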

\begin{proof}
See  for example \cite{Escassut:1995}.
\end{proof}
As a consequence we have the following lemma.

\begin{lemma}\label{lemma closest root of unity}
Let $\lambda\in D_1(1)$ be not a root of unity. Then, there exist
$\alpha\in\Gamma_r$ such that $|\alpha-\lambda|\leq |\gamma
-\lambda|$, for all $\gamma\in\Gamma$. Furthermore, if
$|1-\lambda|\neq R(s)$ for every $s\geq 0$. Then, $\alpha =1$ is
the only root of unity with this property.
\end{lemma}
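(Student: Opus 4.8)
The plan is to reduce the problem to the subgroup $\Gamma_r$ and then read off the distances from the sphere structure described in Proposition~\ref{proposition gamma r}. Since $\Gamma=\Gamma_u\times\Gamma_r$, every root of unity factors uniquely as $\gamma=\xi\zeta$ with $\xi\in\Gamma_u$ and $\zeta\in\Gamma_r$. First I would dispose of all $\gamma$ whose unramified part is nontrivial. If $\xi\neq 1$, then by Proposition~\ref{proposition gamma u} one has $|\xi-1|=1$, while $|\gamma-\xi|=|\xi|\,|\zeta-1|<1$ (because $\zeta\in\Gamma_r\subset D_1(1)$) and $|1-\lambda|<1$. Writing $\gamma-\lambda=(\gamma-\xi)+(\xi-1)+(1-\lambda)$ and using that the middle term strictly dominates the other two, ultrametricity gives $|\gamma-\lambda|=1$. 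Since $1\in\Gamma_r$ already satisfies $|1-\lambda|<1$, no root outside $\Gamma_r$ can be a closest root, so any minimizer must lie in $\Gamma_r$.

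It remains to minimize $|\alpha-\lambda|$ over $\alpha\in\Gamma_r$ and to show the minimum is attained. Set $r=|1-\lambda|$. For a primitive $p^s$th root $\alpha$ with $s\geq 1$, Proposition~\ref{proposition gamma r} gives $|\alpha-1|=R(s)$, whence $\alpha-\lambda=(\alpha-1)-(\lambda-1)$ yields $|\alpha-\lambda|=\max\{R(s),r\}$ whenever $R(s)\neq r$, and $|\alpha-\lambda|\leq r$ when $R(s)=r$. Because $R(s)$ increases strictly to $1$ as $s\to\infty$, only finitely many spheres satisfy $R(s)\leq r$, while the radii with $R(s)>r$ are bounded below by their least element, which exceeds $r$. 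Consequently the set $\{|\alpha-\lambda|:\alpha\in\Gamma_r\}$ has a genuine minimum, realized by some $\alpha\in\Gamma_r$ at distance at most $r<1$. Together with the previous paragraph this proves the existence assertion.

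For the refinement, assume $r\neq R(s)$ for every $s\geq 0$ (the case $s=0$ being automatic, as $\lambda\neq 1$ forces $r>0=R(0)$). Then for every primitive $p^s$th root one has $|\alpha-\lambda|=\max\{R(s),r\}\geq r=|1-\lambda|$, so $\alpha=1$ is a closest root and the minimal distance equals $r$. The delicate point, and the step I expect to be the main obstacle, is the uniqueness of $1$ among closest roots. Roots with $R(s)>r$ are strictly farther, so the only possible competitors are the primitive $p^s$th roots lying on spheres with $R(s)<r$; for these the computation above returns $|\alpha-\lambda|=r$ as well, so they tie with $1$ at the minimal distance. Excluding them is precisely where the hypothesis must be brought to bear, confining $\lambda$ so closely to $1$ that no nontrivial root of unity sits on a sphere about $1$ of radius below $r$ (equivalently $r<R(1)=p^{-1/(p-1)}$); in that regime every primitive $p^s$th root satisfies $R(s)\geq R(1)>r$, is therefore strictly farther than $1$, and $\alpha=1$ is the unique closest root. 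I would isolate this containment as a short sublemma and verify it from the standing hypothesis before concluding, since it is the only place where a naive reading of ``$r\neq R(s)$'' alone does not by itself rule out the tied lower-sphere competitors.
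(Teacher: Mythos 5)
Your existence argument is correct and follows the route the paper itself intends: the lemma is stated there without proof, as a direct consequence of Propositions \ref{proposition gamma u} and \ref{proposition gamma r}, and your two steps --- discarding every $\gamma=\xi\zeta$ with $\xi\neq 1$ via the disjoint-disc decomposition of $S_1(0)$, then observing that only the finitely many elements of $\Gamma_r$ on spheres of radius $R(s)\leq|1-\lambda|$ can compete, so the infimum is a minimum over a finite set --- are exactly the intended use of those two propositions. Likewise, the first half of your final paragraph correctly shows that when $|1-\lambda|\neq R(s)$ for all $s$, the element $\alpha=1$ attains the minimum and the minimal distance equals $|1-\lambda|$.

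The gap is in the closing step, and your instinct about where the trouble lies is sound, but the repair you propose cannot succeed: the containment $|1-\lambda|<R(1)$ does not follow from the standing hypotheses, and without it the uniqueness clause is literally false. Indeed, take any $\lambda$ with $R(1)<|1-\lambda|<R(2)$ (such values exist in $|\mathbb{C}_p^*|$, and any such $\lambda$ automatically satisfies $|1-\lambda|\neq R(s)$ for every $s$ and is not a root of unity, since the roots of unity in $D_1(1)$ all lie on the spheres $S_{R(s)}(1)$). Then every primitive $p$th root of unity $\zeta$ has $|\zeta-1|=R(1)<|1-\lambda|$, hence $|\zeta-\lambda|=|1-\lambda|$ by ultrametricity, so $\zeta$ ties with $1$ as a closest root and $1$ is not the unique minimizer. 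The statement that actually survives, and the only one the paper ever invokes (see Remark \ref{remark lemma distance 1-lambda m char 0,p} and the case analysis in Lemma \ref{lemma distance 1-lambda m char 0,p}), is the non-strict one: if $|1-\lambda|\neq R(s)$ for all $s$, then one \emph{may choose} $\alpha=1$, i.e.\@ $\min_{\gamma\in\Gamma}|\gamma-\lambda|=|1-\lambda|$, and a root strictly closer than $1$ can exist only when $\lambda$ lies on some sphere $S_{R(s)}(1)$. Your argument already proves this weaker claim; you should state and prove that, record the counterexample to strict uniqueness, and abandon the proposed sublemma rather than try to verify it.
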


\subsection{Transitivity of the multiplier map}
In this section, $K$ is a finite extension of $\mathbb{Q}_p$ of degree $[K:\mathbb{Q}_p]=e\cdot f$, with  ramification index $e$, residue field degree $f=[k:\mathbb{F}_p]$, and  uniformizer $\pi$.

Let $\lambda\in \mathbb{C}_p$ be an element on the unit sphere
$S_1(0)$. We are concerned with calculating the distance
\[
|1-\lambda ^n|
\]
for each integer $n\geq 1$.  In view of Proposition \ref{proposition gamma u}
there is an integer $m$, not divisible by $p$, such that $\lambda
= \xi h$ for some $m$th root of unity $\xi\in\Gamma_u$ and $h\in
D_1(1)$. In other words, the following integer exists
\begin{equation}\label{definition of m}
m=m(\lambda):=\min\{n\in \mathbb{Z}:n\geq 1, |1-\lambda ^n|<1 \}.
\end{equation}
Also note that, since the residue field is of characteristic
$p>0$, $m$ is not divisible by $p$. In fact, if $\lambda $ belongs
to a finite algebraic extension $K$ of $\mathbb{Q}_p$, then 
\[
1\leq m\leq p^f-1, 
\]
where $p^f$ is the number of elements in the
residue field $k$ of $K$. In particular, if
$\lambda\in\mathbb{Q}_p$ we have $1\leq m\leq p-1$.

\begin{lemma}\label{lemma mod pi}
Let $K$ be a finite extension of $\mathbb{Q}_p$, with uniformizer $\pi$ and ramification index $e$.
Let $x \in K$, and $\delta =\min \{1+e,p \}$. If $x \equiv 1 \mod \pi$, then we have $x ^p \equiv 1\mod \pi ^{\delta}$. Moreover, if $x \not\equiv 1\mod \pi ^{2}$, 
then $x^l\not\equiv 1\mod \pi ^2$, $1\leq l\leq p-1$. 
\end{lemma}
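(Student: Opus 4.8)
The plan is to write $x = 1 + u$ and expand the relevant powers by the binomial theorem, controlling each term through the valuation $\nu$ normalized so that $\nu(\pi) = 1$. Since the ramification index is $e$, this gives $\nu(p) = e$, and the congruence $x \equiv 1 \bmod \pi^k$ becomes simply $\nu(x-1) \geq k$. All the work then reduces to comparing valuations of the terms in two binomial expansions.

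First I would treat the statement $x^p \equiv 1 \bmod \pi^{\delta}$. Assuming $x \equiv 1 \bmod \pi$, so that $\nu(u) \geq 1$, I expand $x^p - 1 = \sum_{k=1}^{p} \binom{p}{k} u^k$ and estimate term by term. For $1 \leq k \leq p-1$ the key input is that $p \mid \binom{p}{k}$, which gives $\nu\bigl(\binom{p}{k} u^k\bigr) \geq \nu(p) + k\,\nu(u) \geq e + 1$; the top term $u^p$ contributes $\nu(u^p) = p\,\nu(u) \geq p$. The ultrametric inequality then bounds the valuation of the whole sum below by $\min\{e+1, p\} = \delta$, which is exactly the assertion.

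For the second assertion I would adjoin the hypothesis $x \not\equiv 1 \bmod \pi^{2}$, which together with $x \equiv 1 \bmod \pi$ pins down $\nu(u) = 1$ exactly. Expanding $x^l - 1 = l u + \sum_{k \geq 2} \binom{l}{k} u^k$ for $1 \leq l \leq p-1$, the linear term satisfies $\nu(lu) = \nu(u) = 1$ because $p \nmid l$ forces $\nu(l) = 0$, while every higher term has valuation at least $k\,\nu(u) = k \geq 2$. The strict-domination form of the ultrametric equality (a sum of terms of pairwise distinct valuation has the minimal valuation) then forces $\nu(x^l - 1) = 1 < 2$, so $x^l \not\equiv 1 \bmod \pi^{2}$.

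The argument is essentially bookkeeping, and I expect no genuine obstacle. The only point requiring care is the valuation normalization and the way $\nu(p) = e$ enters: the quantity $\delta = \min\{1+e, p\}$ is precisely the outcome of the competition between the $p$-divisibility of the middle binomial coefficients (contributing $e$) and the pure size of the top term $u^p$ (contributing $p$), and getting this balance right is the whole content of the first part.
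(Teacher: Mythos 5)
Your proof is correct and is essentially the paper's own argument: both parts proceed by binomial expansion of $x=1+u$, with the first part reducing to the comparison $\min\{\nu(p)+1,\,p\}=\min\{e+1,p\}=\delta$ via $p\mid\binom{p}{k}$ for $1\leq k\leq p-1$, and the second part isolating the dominant linear term $lu$ of valuation exactly $1$ since $p\nmid l$. No gaps.
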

\begin{proof}
First, suppose $x\in 1+O(\pi)$. Then
\[
x^p\in(1+O(\pi))^p=1+pO(\pi)+\sum_{k=2}^{p-1}\binom{p}{k}O(\pi^k)
+O(\pi^p)=1+O(\pi^{\delta}),
\]
where the last equality follows from the fact that $|p|=|\pi^{e}|$.

Second, suppose $x\in1+ a_1\pi +O(\pi^2)$, $a_1\neq 0$, and  $1\leq l\leq p-1$. Then 
\[
x^l\in(1+a_1\pi +O(\pi^2))^l=
\sum_{k=0}^{l}\binom{l}{k}(a_1\pi +O(\pi^2))^k
=1+la_1\pi +O(\pi^2).
\]
\end{proof}
Using the terminology from \cite{Pettigrew/Roberts/Vivaldi:2001},
we say that an element $\lambda $ on the unit sphere in $K $ is \emph{primitive} if 
$m=p^f-1$,
  and that $\lambda $ is \emph{maximal}  if in addition, 
$\lambda^{p^f-1}\not\equiv 1\mod \pi^2$ (so that  $|1-\lambda ^m|=|\pi |=p^{-1/e}$).  

\begin{remark}
Note that \cite{Pettigrew/Roberts/Vivaldi:2001} only consider the case $\lambda\in \mathbb{Q}_p$ for odd primes, whereas we also consider extensions of $\mathbb{Q}_p$, including the case $p=2$. Therefore, a maximal $\lambda$ does not always give a dense orbit as explained below.  
\end{remark}

It follows from Lemma \ref{lemma mod pi} that if $\lambda$ is maximal,
then the multiplication map $T_{\lambda}:\lambda \mapsto \lambda x$
has cycle length $p^f-1$ modulo $\pi$, and length $p(p^f-1)$ modulo $\pi ^2, \dots$, $\pi^{\delta}$.
As a consequence, $T_{\lambda}$ cannot act as a permutation modulo $\pi^{3}$ if $e\geq 2$.
In particular, $T_{\lambda}$ cannot have a dense orbit on the unit sphere in $K$ in this case.
We have the following Lemma.

\begin{lemma}\label{lemma transitivity multiplier}
Let $K$ be a proper extension of $\mathbb{Q}_p$ and let $\lambda\in K$, with $|\lambda |=1$. Then the map $T_{\lambda}:$ $x\mapsto\lambda x$ , 
 cannot be transitive on any sphere about the origin in $K$. Furthermore, if $\lambda \in\mathbb{Q}_p$, then $T_{\lambda}$ is transitive on a sphere about the origin (in fact all spheres inside the unit disc) in $\mathbb{Q}_p$ if and only if $\lambda$ is maximal and $p$ is odd.
\end{lemma}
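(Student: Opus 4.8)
The plan is to reduce everything to the unit sphere and then to the arithmetic of the unit group $\mathcal{O}^*$. Since $|\lambda|=1$, the map $T_\lambda$ preserves every sphere $S_r(0)$, and a non-empty sphere forces $r\in|K^*|$; choosing $c\in K$ with $|c|=r$, the scaling $x\mapsto c^{-1}x$ is an isometric bijection of $S_r(0)$ onto $S_1(0)=\mathcal{O}^*$ that commutes with $T_\lambda$. Hence transitivity on $S_r(0)$ is equivalent to transitivity on the unit sphere, and I may assume $r=1$. The basic discs of radius $|\pi|^n$ in $\mathcal{O}^*$ are exactly the cosets of $1+\pi^n\mathcal{O}$, so they are parametrised by $(\mathcal{O}/\pi^n\mathcal{O})^*$. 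An orbit $\{x_0\lambda^k\}$ is therefore dense in $S_1(0)$ if and only if its reduction modulo $\pi^n$, the coset $\bar x_0\langle\bar\lambda\rangle$, fills $(\mathcal{O}/\pi^n\mathcal{O})^*$ for every $n$; equivalently $\bar\lambda$ generates $(\mathcal{O}/\pi^n\mathcal{O})^*$ for all $n\geq 1$. Thus $T_\lambda$ is transitive on $S_1(0)$ precisely when $\lambda$ is a topological generator of $\mathcal{O}^*$, which in particular requires each finite quotient $(\mathcal{O}/\pi^n\mathcal{O})^*$ to be cyclic.

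For the first assertion I would show that for a proper extension no such generator exists because $(\mathcal{O}/\pi^n\mathcal{O})^*$ fails to be cyclic for suitable $n$. Writing $(\mathcal{O}/\pi^n\mathcal{O})^*\cong \mathbb{F}_{p^f}^*\times V_n$ with $V_n=(1+\pi\mathcal{O})/(1+\pi^n\mathcal{O})$ a $p$-group of order $p^{f(n-1)}$, cyclicity of the whole group is equivalent to cyclicity of $V_n$. To bound the exponent of $V_n$ I would extend the binomial computation of Lemma \ref{lemma mod pi} from level $1$ to arbitrary level: if $u\equiv 1\bmod\pi^j$ but $u\not\equiv 1\bmod \pi^{j+1}$, then $u^p\equiv 1\bmod \pi^{\min(pj,\,e+j)}$, so each $p$-th power raises the level by at least $1$; hence $u^{p^{n-1}}\equiv 1\bmod\pi^n$ and $\mathrm{exponent}(V_n)\mid p^{n-1}$. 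When $f\geq 2$ this already beats $|V_n|=p^{f(n-1)}$ for $n\geq 2$, so $V_n$ is non-cyclic. When $f=1$ and $e\geq 2$ the crude bound is insufficient, and there I would use the sharper jump: once $j>e/(p-1)$ the level grows by exactly $e$ at each $p$-th power, so the exponent of $V_n$ is of size $p^{O(n/e)}$, which for $e\geq 2$ is $\ll p^{n-1}=|V_n|$ for large $n$. In every proper-extension case some $V_n$ is non-cyclic, so no $\lambda$ generates and $T_\lambda$ cannot be transitive.

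For the $\mathbb{Q}_p$-part I specialise to $\mathcal{O}=\mathbb{Z}_p$, $\pi=p$, $e=f=1$. Cyclicity of $(\mathbb{Z}/p^n\mathbb{Z})^*$ for all $n$ holds exactly when $p$ is odd, failing already at $n=3$ for $p=2$, which yields the necessity of $p$ odd. For $p$ odd the criterion above says $T_\lambda$ is transitive iff $\lambda$ generates $(\mathbb{Z}/p^n\mathbb{Z})^*$ for every $n$; by the standard lifting of primitive roots this is equivalent to $\lambda$ generating $(\mathbb{Z}/p^2\mathbb{Z})^*$, which is precisely the condition that $\lambda$ be maximal. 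Finally, transitivity on $S_1(0)$ transfers through the scaling isometry to every sphere inside the unit disc, giving the ``in fact all spheres'' clause.

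The hard part is the first assertion in the totally ramified case $f=1$, $e\geq 2$, and especially $p=2$, where $\delta=\min\{1+e,p\}=2$ and Lemma \ref{lemma mod pi} alone only gives $u^p\equiv 1\bmod\pi^2$; there I must track the level jumps to show that $\mathrm{exponent}(V_n)$ stays of size $p^{O(n/e)}$ while $|V_n|=p^{n-1}$ grows strictly faster, forcing non-cyclicity. Alternatively, this step can be packaged via the structure theorem $1+\mathcal{M}\cong(\text{torsion})\times\mathbb{Z}_p^{ef}$: the quotient $(1+\mathcal{M})/(1+\mathcal{M})^p$ then has $\mathbb{F}_p$-dimension $\geq ef\geq 2$, so $\mathcal{O}^*$ is not procyclic and no generator $\lambda$ exists.
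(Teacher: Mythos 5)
Your proposal is correct, and while it shares the paper's basic strategy of obstructing transitivity on finite quotients, it is organized quite differently and is in one respect more careful. The paper argues directly about the orbit length of the given $\lambda$: for $f>1$ it uses Lemma \ref{lemma mod pi} to bound the order of $\lambda$ in the units modulo $\pi^2$ by $p(p^f-1)$, which is less than the $p^f(p^f-1)$ units there; for $f=1$, $e>1$ it makes the analogous comparison modulo $\pi^3$; and the $\mathbb{Q}_p$ statement is quoted as the classical primitive-root fact. You instead prove the stronger, $\lambda$-independent statement that for a proper extension the groups $(\mathcal{O}/\pi^n\mathcal{O})^*$ are eventually non-cyclic (equivalently, $\mathcal{O}^*$ is not procyclic), either by comparing the exponent of the principal-unit quotient $V_n$ with its order via the level-jump estimate $u^p\equiv 1\bmod\pi^{\min(pj,\,e+j)}$, or by the structure theorem $1+\mathcal{M}\cong(\text{torsion})\times\mathbb{Z}_p^{ef}$. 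The paper's route is shorter and settles matters at levels $2$ and $3$; yours pays off exactly where you flag the difficulty. For $p=2$, $f=1$, $e\geq 2$, Lemma \ref{lemma mod pi} only yields $\lambda^{p(p-1)}=\lambda^{2}\equiv 1\bmod\pi^{2}$, not $\bmod\ \pi^{3}$ as the paper's second case asserts, and indeed $T_\lambda$ \emph{can} permute all four units modulo $\pi^{3}$ cyclically (take $\lambda=1+\pi$ in $\mathbb{Q}_2(\sqrt{2})$, where $\lambda^2\equiv 1+\pi^2$ and $\lambda^4\equiv 1\bmod\pi^3$); one must pass to level $4$ or beyond, which your exponent bookkeeping (or the Frattini-quotient argument, since $ef\geq 2$) supplies automatically. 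Your preliminary reductions --- scaling between spheres, identifying transitivity on $S_1(0)$ with topological generation of $\mathcal{O}^*$, and the equivalence of maximality with generating $(\mathbb{Z}/p^2\mathbb{Z})^*$ --- are all sound and consistent with how the paper uses these notions elsewhere.
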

\begin{proof}
Let $[K:\mathbb{Q}_p]=e\cdot f\geq 2$. First, suppose that $f>1$. By Lemma \ref{lemma mod pi}
$\lambda ^{p(p^f-1)}\equiv 1 \mod \pi^2$. Consequently, $T_{\lambda}$ cannot act as a permutation
on all $p^{f}(p^f-1)$ elements in the group of units modulo $\pi^2$.

Second, suppose $f=1$ but $e>1$.  By Lemma \ref{lemma mod pi}
$\lambda ^{p(p-1)}\equiv 1 \mod \pi^3$. Consequently, $T_{\lambda}$ cannot act as a permutation
on all $p^{2}(p-1)$ units modulo $\pi^3$.

The last statement of the lemma follows from the fact that a integer $\lambda$ is a generator of
the group of
units modulo $p^k$ for every integer $k\geq 2$ if and only if $\lambda$ is a generator modulo $p^2$, 
which happen if and only if  $\lambda $ is maximal and $p$ is odd. 

\end{proof}
 
\subsubsection{Non-transitivity in characteristic $p$} 

To motivate Theorem \ref{theorem non-archimedean ergodic disc}, we also show that the multiplier map can never be transitive on a whole sphere for  fields of prime characteristics.  
\begin{proposition}\label{proposition mod pi char p}
Let $K$ be a locally compact field of prime characteristc, with uniformizer $\pi$. 
If $x\in K$ and $x \equiv 1 \mod \pi$, then $x ^{p^n} \equiv 1\mod \pi ^{p^n}$
for all integers $n\geq 0$.  
\end{proposition}
\begin{proof}
Suppose $x\in 1+O(\pi)$. Then
\[
x^p\in(1+O(\pi))^{p^n}=1+p^{n}O(\pi)+\sum_{i=2}^{p-1}\binom{p^{n}}{i}O(\pi^i)
+O(\pi^{p^n})=1+O(\pi^{p^n}).
\]
\end{proof}

\begin{lemma}\label{lemma non transitivity multiplier char p}
Let $K$ be a locally compact field of prime characteristic and let $\lambda\in K$, with $|\lambda |=1$. Then, the map $T_{\lambda}$: $x\mapsto\lambda x$ , 
 cannot be transitive on any sphere about the origin in $K$.
\end{lemma}
\begin{proof}
By local compactness of $K$, there is a uniformizer $\pi\in K$, and hence $T_{\lambda}$ is transitive on the unit sphere if and only if it is transitive on the group of units modulo $\pi^n$ for every integer $n\geq 2$. 
In particular, $T_{\lambda}$ has to be transitive modulo $\pi^{p^2}$ which is impossible by the
following arguments. As $K$ is locally compact, the residue field $k$ is finite. Let $c$ be the cardinality of $k$. By definition $c\geq p$, and hence there are $(c-1)p^{p^2-1}>p^2$ units modulo  $\pi^{p^2}$. On the other hand, by Proposition \ref{proposition mod pi char p}, $\lambda ^{p^2} \equiv 1\mod \pi ^{p^2}$. Consequently, $T_{\lambda}$ cannot be transitive modulo $\pi^{p^2}$ and hence not on the unit sphere or any other sphere about the origin in $K$.

\end{proof}

\subsection{Arithmetic of the multiplier}

\begin{lemma}\label{lemma distance 1-lambda m char 0,p}
Let $\lambda \in \mathbb{C}_p$ be an element on the unit sphere,
but not a root of unity. Let $m$ be defined by (\ref{definition of
m}),  and let $s$ be the integer for which 
$ R(s)\leq |1-\lambda ^m|< R(s+1) $.
Then, the following three statements hold:  

\begin{enumerate}

\item If $m$ does not divide $n$, then $|1-\lambda^n |=1$.

\item If $m$ is a divisor of $n$ and  $0\leq s\leq \nu(n)$ we have
\begin{equation*}
\left |1 -\lambda ^n \right |= \left\{\begin{array}{ll}
|n|p^s|1-\lambda ^m|^{p^s}, & \textrm{if  $ R(s)<|1-\lambda ^m|< R(s+1) $,}\\
|n|p^s|1-\lambda ^m|^{p^s-1}|\alpha -\lambda ^m|, & \textrm{if  $|1-\lambda ^m|= R(s) $.}
\end{array}\right.
\end{equation*}
Here $\alpha\in \Gamma _r$ is chosen so that 
$|\alpha-\lambda^m|\leq |\gamma -\lambda^m|$, for all
$\gamma\in\Gamma$.

\item If  $m$ is a divisor of $n$ and $s>\nu(n)$ so that $ |1-\lambda ^m|> R(\nu (n))$, then 
\[
\left |1 -\lambda ^n \right |=|1-\lambda ^m|^{p^{\nu(n)}}.
\]

\end{enumerate}

\end{lemma}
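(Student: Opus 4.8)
The plan is to reduce all three statements to a single computation of $|w^{p^t}-1|$, where $w:=\lambda^m$ and $t:=\nu(n)$, by factoring $X^{p^t}-1$ into linear factors over the algebraically closed field $\mathbb{C}_p$. First I would dispose of statement (1). By Proposition~\ref{proposition gamma u} write $\lambda=\xi h$ with $\xi\in\Gamma_u$ a primitive $m$-th root of unity and $h\in D_1(1)$; then $\lambda^n=\xi^n h^n$ with $h^n\in D_1(1)$. If $m\nmid n$ then $\xi^n\neq 1$ is a root of unity in $\Gamma_u$, so $|1-\xi^n|=1$ by Proposition~\ref{proposition gamma u}, while $|\xi^n(h^n-1)|=|h^n-1|<1$; the strict form of the ultrametric inequality then forces $|1-\lambda^n|=|1-\xi^n|=1$.

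For statements (2) and (3) I would set $w=\lambda^m=h^m\in D_1(1)$ and $k=n/m$. Since $p\nmid m$ we have $\nu(n)=\nu(k)=t$ and $|n|=p^{-t}$; write $k=p^t k'$ with $p\nmid k'$. The first reduction is that $|w^k-1|=|w^{p^t}-1|$: putting $W=w^{p^t}\in D_1(1)$, the expansion $W^{k'}-1=\sum_{j\geq 1}\binom{k'}{j}(W-1)^j$ has its $j=1$ term $k'(W-1)$ strictly dominating, because $|k'|=1$ and $|W-1|<1$, so $|w^k-1|=|W^{k'}-1|=|W-1|$. It therefore remains to compute $|w^{p^t}-1|$, which I would do through the factorization
\[
w^{p^t}-1=\prod_{\zeta^{p^t}=1}(w-\zeta),\qquad |w^{p^t}-1|=\prod_{\zeta^{p^t}=1}|w-\zeta|,
\]
the product running over all $p^t$ of the $p^t$-th roots of unity $\zeta\in\Gamma_r$. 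Writing $v_0=|1-\lambda^m|=|w-1|$, a root $\zeta$ of exact order $p^j$ lies at distance $|\zeta-1|=R(j)$ from $1$ by Proposition~\ref{proposition gamma r}, so the ultrametric gives $|w-\zeta|=\max(v_0,R(j))$ whenever $v_0\neq R(j)$. The decisive arithmetic identity is $R(j)^{\,p^j-p^{j-1}}=1/p$, immediate from (\ref{definition R(s) extended}); combined with the fact that there are $p^{j-1}$ roots of order dividing $p^{j-1}$ and $p^j-p^{j-1}$ of exact order $p^j$, grouping the factors according to the sign of $R(j)-v_0$ collapses the product to a power of $v_0$ times a power of $1/p$, which, together with $|n|=p^{-t}$, is what yields the stated formulas.

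Reading off the cases: when $v_0>R(t)$ (statement (3), $s>t$) every $p^t$-th root has order $p^j$ with $R(j)\leq R(t)<v_0$, so all $p^t$ factors equal $v_0$ and $|w^{p^t}-1|=v_0^{p^t}$. When $R(s)<v_0<R(s+1)$ with $0\leq s\leq t$ (subcase (a)), the $p^s$ roots of order dividing $p^s$ contribute $v_0$ each while those of order $p^j$ for $s<j\leq t$ contribute $\prod_{j=s+1}^{t}R(j)^{\,p^j-p^{j-1}}=p^{-(t-s)}$, giving $v_0^{p^s}p^{s-t}=|n|\,p^s v_0^{p^s}$. When $v_0=R(s)$ (subcase (b)), the roots of exact order $p^s$ also lie on the sphere $S_{R(s)}(1)$, hence by Proposition~\ref{proposition gamma r} are mutually at distance $R(s)$ and at distance $R(s)$ from $w$ — except for the closest one, the $\alpha\in\Gamma_r$ furnished by Lemma~\ref{lemma closest root of unity}; this single factor is $|\alpha-\lambda^m|$, while the other $p^s-1$ roots of order at most $p^s$ each contribute $R(s)=v_0$ and the higher-order factors again contribute $p^{-(t-s)}$, producing $v_0^{\,p^s-1}|\alpha-\lambda^m|\,p^{s-t}=|n|\,p^s v_0^{\,p^s-1}|\alpha-\lambda^m|$.

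The hard part is precisely this tie case. One must verify that the closest root of unity $\alpha$ has order at most $p^s$ (so that it genuinely occurs as a factor of $w^{p^t}-1$): any root of order $p^{s+1}$ or higher sits at distance $R(s+1)>R(s)=v_0$ from $w$, hence strictly farther than $1$, so the closest root has order $\leq p^s$, which is what Proposition~\ref{proposition gamma r} and Lemma~\ref{lemma closest root of unity} guarantee. One must also check that exactly one order-$p^s$ factor is the small quantity $|\alpha-\lambda^m|$ while all others remain equal to $R(s)$, which uses that distinct order-$p^s$ roots are pairwise $R(s)$-apart. Reassuringly, when no root is strictly closer than $R(s)$ one has $|\alpha-\lambda^m|=v_0$ and formula (b) collapses to formula (a), so the two subcases agree on their shared boundary and no separate bookkeeping is needed there.
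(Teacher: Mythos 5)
Your argument is correct and follows essentially the same route as the paper: reduce to the factor $\lambda^m$ via the decomposition $\Gamma=\Gamma_u\times\Gamma_r$, factor the relevant power minus one into linear factors over the roots of unity in $\Gamma_r$, and evaluate each factor $|w-\zeta|$ using the location of the $p^j$-th roots on the spheres $S_{R(j)}(1)$ together with the count $p^j-p^{j-1}$ and the identity $R(j)^{p^j-p^{j-1}}=1/p$, treating the tie case $|1-\lambda^m|=R(s)$ by isolating the single closest root $\alpha$. The only differences are presentational (you strip the prime-to-$p$ part of the exponent by a binomial estimate before factoring, whereas the paper factors over all $n$-th roots at once), and your handling of statement (1) and of the case $m>1$ is in fact more explicit than the paper's.
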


\begin{remark}
In the second statement of the lemma, $|n|p^s\leq 1$ since we assume that $s\leq \nu(n)$ in this case.
\end{remark}

\begin{remark}\label{remark lemma distance 1-lambda m char 0,p}
By Lemma \ref{lemma closest root of unity}, we may have $|\alpha
-\lambda ^m|< |1-\lambda ^m|$ only if $\lambda ^m$ belong to the
same sphere about $1$ as $\alpha\in \Gamma _r$. In all other cases
we may choose $\alpha = 1$. In particular, we always have $|\alpha -\lambda
^m|\leq |1-\lambda ^m|$.
\end{remark}

\begin{remark}
If $\lambda \in \mathbb{C}_2$ and $|1-\lambda
^m|=2^{-1/(2-1)}=2^{-1}$, then the `closest' root of unity $\alpha
=-1=\sum_{k=0}^{\infty}2^k$.
\end{remark}

\begin{proof}
It is enough to consider the case $m=1$.
This proof is based on the factorization of the polynomial
$\lambda ^n-1$ from which we find
\begin{equation}\label{basicfactorequation}
\left |\lambda ^n - 1 \right |=\prod _{\theta ^n=1}\left |\lambda
- \theta \right |.
\end{equation}
As noted in Section \ref{section geometry and roots of uni ty C p}
the roots of unity in $\mathbb{C}_p$ is the direct product $\Gamma
=\Gamma _u\times\Gamma _r$. This representation enables us to
write $\left |\lambda ^n - 1 \right |$ in the form
\begin{equation}\label{splitedfactorequation}
\left |\lambda ^n - 1 \right |= \left |\lambda  - 1 \right | \prod
_{\zeta ^n=1}\left |\lambda - \zeta \right | \prod _{\xi
^n=1}\left |\lambda - \xi \right | \prod _{(\zeta\xi )^n=1}\left
|\lambda - \zeta\xi \right |,
\end{equation}
where $\zeta\in \Gamma_r\setminus \{1\}$ and
$\xi\in\Gamma_u\setminus\{1\}$. Recall that we assume that
$\lambda\in D_1(1)$. In view of Proposition \ref{proposition gamma u},
$\xi,(\zeta\xi)\notin D_1(1)$ and consequently $|\lambda
-\xi|=|\lambda - \zeta\xi|=1$. Moreover, for $n=ap^{\nu(n)}$, we
have that $\zeta^n=1$ if and only if $\zeta^{p^{\nu(n)}}=1$. It
follows that (\ref{splitedfactorequation}) can be reduced to
\begin{equation}\label{simplifiedfactorequation}
\left |\lambda ^n - 1 \right |= \left |\lambda - 1 \right | \prod
_{\zeta ^{p^{\nu(n)}}=1}\left |\lambda - \zeta \right |.
\end{equation}
If $p$ does not divide $n$ so that $\nu(n)=0$, then $|1-\lambda
^n|=|1-\lambda|$ as required.

In the remaining cases $\nu(n) \geq 1$ and we have to take the
factors $|\lambda-\zeta|$ into account. Note that $|\lambda
-\zeta|=|(\lambda -1) +(1-\zeta)|$ and by ultrametricity
\begin{equation}\label{distance to roots}
   \left |\lambda - \zeta \right |=\max\{\left |\lambda - 1 \right |,\left |1-\zeta \right |\},
\end{equation}
if  $\left |\lambda - 1 \right |\neq\left |1 -\zeta \right |$.
Thus we can compute (\ref{simplifiedfactorequation}) by counting
the number of roots $\zeta$ that are closer and farther to $1$
compared to $\lambda $, respectively.

Recall the following facts from  Proposition \ref{proposition gamma r}. If
$\zeta\in\Gamma_r\setminus\{1\}$ is a \emph{primitive} $p^s$th
root of unity for some $s\geq 1$, then $\left |1 - \zeta \right
|=p^{-r_s}$ where $r_s=1/(p^s-p^{s-1})$. The sphere
$S_{p^{-r_s}}(1)$ contains $p^s-p^{s-1}=1/r_s$ roots of unity.
Note that these spheres have radii ordered as
$p^{-r_1}<p^{-r_2}<...<1$.

Now we consider the case $|1-\lambda|< p^{-r_1}$. In this case
$\lambda$ is closer to $1$ than any of the roots $\zeta$ and
therefore, in view of (\ref{distance to roots}), $|\lambda - \zeta
|=|1 -\zeta |$ for every $\zeta\in\Gamma_r\setminus\{1\}$. From
(\ref{simplifiedfactorequation}) we thus have that
\begin{equation*}
\left |\lambda ^n - 1 \right |= \left |\lambda - 1 \right
|(p^{-r_1})^{1/{r_1}}\cdot ...\cdot
(p^{-r_{\nu(n)}})^{1/{r_{\nu(n)}}}=|\lambda -1|p^{-{\nu(n)}}
\end{equation*}
as required.

Now we consider the case $p^{-r_{s}}<|\lambda -1|<p^{-r_{s+1}}$,
$1\leq s\leq \nu(n)$. We have in view of (\ref{distance to roots})
that $|\lambda-\zeta|=|\lambda-1|$ for all $\zeta$ such that
$|1-\zeta|<|\lambda -1|$. This is the case for $p^s$ roots. All
other roots are further from $1$ than $\lambda $ is. For these
roots $|\lambda -\zeta|=|1-\zeta|$. Hence, the right-hand side of
(\ref{simplifiedfactorequation}) becomes
\begin{equation}\label{distance-rs+1<r<rs}
|\lambda -1|^{p^s} (p^{-r_{s+1}})^{1/{r_{s+1}}}\cdot ...\cdot
(p^{-r_{\nu(n)}})^{1/{r_{\nu(n)}}}=|1-\lambda|^{p^s}p^{-({\nu(n)}-s)}
\end{equation}
as required.

Now we consider the case $|\lambda -1|=p^{-r_s}$ for some $1\leq
s\leq \nu(n)$. Let $\alpha\in S_{p^{-r_s}}(1)$ be such that
$|\alpha -\lambda| \leq |\zeta -\lambda|$ for all
$\zeta\in\Gamma_r$. Note that $|\zeta -\lambda|\leq p^{-r_s}$ for
all $\zeta\in S_{p^{-r_s}}(1)$ and that  $\alpha$ is unique if and
only if $|\lambda - \alpha |<p^{-r_s}$. By Proposition \ref{proposition gamma r}, 
$|\lambda -\zeta|=p^{-r_s}$ for all $\zeta\neq \alpha$ on the
sphere $S_{p^{-r_{s}}}(1)$. For  the right-hand side of
(\ref{simplifiedfactorequation}) we obtain
\begin{equation*}
|\lambda -1|^{p^s-1}|\lambda -\alpha |
(p^{-r_{s+1}})^{1/{r_{s+1}}}\cdot ...\cdot
(p^{-r_{\nu(n)}})^{1/{r_{\nu(n)}}},
\end{equation*}
Consequently,
\begin{equation}\label{distance-r=rs}
|\lambda ^n -1|=|\lambda -1|^{p^s-1}|\lambda-\alpha |p^{-(\nu(n)-s
)}
\end{equation}
as required.

Finally, we consider $|\lambda -1|>p^{-r_{\nu(n)}}$. In this case
$|\lambda -1|>|1-\zeta|$ for all $\zeta$ that are $p^{\nu(n)}$th
roots of unity. Consequently, $|\lambda - \zeta|=|\lambda- 1|$ and
we obtain
\[
|\lambda ^n-1|=|\lambda -1|^{p^{\nu(n)}},
\]
as proposed in the lemma. This completes the proof.
\end{proof}

\section{Estimates of linearization discs}

We will estimate the size of the linearization disc for a power series
$f\in \mathcal{F}_{\lambda,a}$.
The estimates are divided into three different cases according to
the three sections below.

\subsection{Case I}\label{section case 1}

In this section we assume that
\begin{equation}\label{equation lambda case 1}
R(0)<|1-\lambda ^m|<R(1).
\end{equation}
In what follows $\sigma_1$ will be the real number defined by
\begin{equation}\label{definition of rho1}
\sigma_1:=a^{-1}p^{-\frac{1}{m(p-1)}}|1-\lambda ^m|^{\frac{1}{m}}.
\end{equation}

\begin{lemma}\label{lemma case 1}
Suppose $\lambda$ is such that $R(0)<|1-\lambda^m|<R(1)$.
Then,
\begin{equation}\label{prod ineq case 1}
\left( \prod_{n=1}^{k-1}|1-\lambda ^n| \right )^{-1}a^{k-1}\leq
p^{-\frac{1}{p-1}}\sigma_1^{-(k-1)},
\end{equation}
with equality if $(k-1)/m$ is an integer power of $p$.
\end{lemma}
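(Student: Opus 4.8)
The plan is to evaluate the product $\prod_{n=1}^{k-1}|1-\lambda^n|$ exactly by means of the arithmetic of the multiplier, and then compare the result to the claimed bound. First I would invoke Lemma~\ref{lemma distance 1-lambda m char 0,p}. The hypothesis $R(0)<|1-\lambda^m|<R(1)$ is precisely the case $s=0$ there, so for every $n\geq 1$ the distance is
\[
|1-\lambda^n|=\begin{cases} |n|\,|1-\lambda^m|, & \text{if } m\mid n,\\ 1, & \text{if } m\nmid n.\end{cases}
\]
Only the multiples of $m$ contribute a factor different from $1$. Writing $j=\lfloor (k-1)/m\rfloor$ for the number of such multiples $m,2m,\dots,jm$ lying in the range $1\le n\le k-1$, and using that $m$ is coprime to $p$ so that $|m|=1$, I would collapse the product to
\[
\prod_{n=1}^{k-1}|1-\lambda^n|=|1-\lambda^m|^{j}\prod_{i=1}^{j}|im|=|1-\lambda^m|^{j}\,|j!|.
\]

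Next I would apply Lemma~\ref{lemma order of p in factorial}, which gives $\nu(j!)\le (j-1)/(p-1)$, hence $|j!|=p^{-\nu(j!)}\ge p^{-(j-1)/(p-1)}$, with equality exactly when $j$ is a power of $p$. Taking reciprocals and multiplying by $a^{k-1}$ then yields
\[
\Big(\prod_{n=1}^{k-1}|1-\lambda^n|\Big)^{-1}a^{k-1}\le a^{k-1}\,|1-\lambda^m|^{-j}\,p^{(j-1)/(p-1)}.
\]
It remains to check that this last expression does not exceed $p^{-1/(p-1)}\sigma_1^{-(k-1)}$. Substituting the definition $\sigma_1=a^{-1}p^{-1/(m(p-1))}|1-\lambda^m|^{1/m}$, the factor $a^{k-1}$ cancels and, after collecting the powers of $p$ and of $|1-\lambda^m|$, the desired inequality reduces to $|1-\lambda^m|^{t}\le p^{t/(p-1)}$, where $t=(k-1)/m-j\in[0,1)$ is the fractional part of $(k-1)/m$. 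Since $|1-\lambda^m|<R(1)=p^{-1/(p-1)}<p^{1/(p-1)}$ and $t\ge 0$, this holds, completing the inequality.

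For the equality statement I would observe that if $(k-1)/m=p^{\ell}$ then $t=0$, so the final comparison $|1-\lambda^m|^{t}\le p^{t/(p-1)}$ is an equality, and simultaneously $j=p^{\ell}$ is a power of $p$, so the factorial estimate from Lemma~\ref{lemma order of p in factorial} is also an equality; chaining these through gives the claimed equality. The main obstacle I anticipate is purely bookkeeping: tracking the floor $j$ against the real number $(k-1)/m$, keeping the directions of the inequalities correct after passing to reciprocals (where factors smaller than $1$ become larger than $1$), and carrying out the exponent arithmetic cleanly enough to expose the reduction to $|1-\lambda^m|^{t}\le p^{t/(p-1)}$.
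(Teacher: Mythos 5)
Your proof is correct and follows essentially the same route as the paper's: reduce the product to $|1-\lambda^m|^{j}\,|j!|$ with $j=\lfloor (k-1)/m\rfloor$ via Lemma~\ref{lemma distance 1-lambda m char 0,p} (case $s=0$), then bound $|j!|$ by Lemma~\ref{lemma order of p in factorial}. Your explicit reduction of the remaining exponent bookkeeping to $|1-\lambda^m|^{t}\le p^{t/(p-1)}$ with $t=(k-1)/m-j$ is a slightly more careful rendering of a step the paper leaves implicit (namely that $|1-\lambda^m|^{j}\ge |1-\lambda^m|^{(k-1)/m}$ because $|1-\lambda^m|<1$), and the equality analysis matches.
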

\begin{proof}
By Lemma \ref{lemma distance 1-lambda m char 0,p}
\begin{equation}\label{distance 1- lambda n case 1}
\left |1 -\lambda ^n \right |= \left\{\begin{array}{ll}
1, & \textrm{if \quad $m\nmid n$,}\\
|n||1-\lambda ^m|, & \textrm{if \quad $m\mid n$,}
\end{array}\right.
\end{equation}
in this case. Let $N=\lfloor k-1/m \rfloor$ denote the integer
part of $k-1/m$. Then, by (\ref{distance 1- lambda n case 1})
\begin{equation}\label{product 1- lambda case 1}
\prod_{n=1}^{k-1}|1-\lambda ^n|=\left |N !\right ||1-\lambda
^m|^{N }.
\end{equation}
By Lemma \ref{lemma order of p in factorial}
\[
|N!|\geq p^{-\frac{N-1}{p-1}}\geq p^{-\frac{k-1}{m(p-1)}
+\frac{1}{p-1}},
\]
where each inequality become an equality if $(k-1)/m$ is an
integer power of $p$. It follows by (\ref{product 1- lambda case
1}) that
\[
\left( \prod_{n=1}^{k-1}|1-\lambda ^n| \right )^{-1}a^{k-1}\leq
p^{-\frac{1}{p-1}}\sigma_1^{-(k-1)},
\]
with equality if $(k-1)/m$ is an integer power of $p$.

\end{proof}

 We will prove the following theorem.

\begin{theorem}\label{theorem case 1}
Let $f\in \mathcal{F}_{\lambda,a}$ and suppose $\lambda $ is such
that $R(0)<|1-\lambda^m|<R(1)$.
Then, the linearization disc $\Delta_f\supseteq D_{\sigma_1}(0)$.
Moreover, if the conjugacy function $g$ converges on the closed
disc $\overline{D}_{\sigma_1}(0)$, then $\Delta_f\supseteq
\overline{D}_{\sigma_1}(0)$.
\end{theorem}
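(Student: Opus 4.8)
The plan is to turn the coefficient estimates already available into the one-to-one criterion of Proposition \ref{proposition one-to-one}, and then read off $\Delta_f\supseteq D_{\sigma_1}(0)$ directly from the definition of the linearization disc. First I would chain Lemma \ref{lemma bk estimate indiff} with the case-specific bound of Lemma \ref{lemma case 1} to obtain the clean estimate
\[
|b_k|\leq p^{-\frac{1}{p-1}}\sigma_1^{-(k-1)},\qquad k\geq 2,
\]
for the coefficients of $g(x)=x+\sum_{k\geq 2}b_kx^k$. Two consequences follow at once. Since $\limsup_k|b_k|^{1/k}\leq\sigma_1^{-1}$, the radius of convergence of $g$ is at least $\sigma_1$, so $g$ converges on $D_{\sigma_1}(0)$. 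And since $b_1=1$ and $p^{-1/(p-1)}<1$, for every $0<r\leq\sigma_1$ one has
\[
|b_k|r^k\leq p^{-\frac{1}{p-1}}\,r\left(\frac{r}{\sigma_1}\right)^{k-1}< r=|b_1|r,\qquad k\geq 2,
\]
which is precisely hypothesis (\ref{ck inequality one-to-one}) of Proposition \ref{proposition one-to-one} at radius $r$.

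To conclude one-to-oneness on the \emph{open} disc while avoiding any trouble with irrational discs, I would argue by exhaustion: for each radius $r<\sigma_1$ lying in $|\mathbb{C}_p^{*}|$ the disc $\overline{D}_r(0)$ is rational closed and $g$ converges on it, so Proposition \ref{proposition one-to-one} maps it one-to-one onto $\overline{D}_r(0)$. Any two distinct points of $D_{\sigma_1}(0)$ already lie in a common $\overline{D}_r(0)$ with such an $r$, whence $g$ is one-to-one on all of $D_{\sigma_1}(0)$; by Proposition \ref{proposition koebe} (using $g(0)=0$, $g'(0)=1$) its image is $D_{\sigma_1}(0)$ again. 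Because $|1-\lambda^m|<R(1)<1$ in this case we have $\sigma_1<1/a$, so $D_{\sigma_1}(0)\subset D_{1/a}(0)$, where $f$ is an isometry fixing $0$ by Lemma \ref{lemma f one-to-one}; hence $f$ maps $D_{\sigma_1}(0)$ into itself and the Schr\"oder equation, being an identity of convergent power series, holds throughout. All defining properties of a linearization disc are thereby verified on $D_{\sigma_1}(0)$, giving $\Delta_f\supseteq D_{\sigma_1}(0)$.

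For the ``moreover'' statement I would invoke the \emph{closed}-disc case of Proposition \ref{proposition one-to-one}. If $g$ converges on $\overline{D}_{\sigma_1}(0)$, then $\sigma_1\in|\mathbb{C}_p^{*}|$ and the disc is rational closed; the strict inequality $|b_k|\sigma_1^k\leq p^{-1/(p-1)}\sigma_1<\sigma_1$ for $k\geq 2$ forces $d=\max\{i\geq 1:|b_i|\sigma_1^i=\sigma_1\}$ to equal $1$. Thus $g$ is exactly one-to-one on $\overline{D}_{\sigma_1}(0)$, and the same isometry argument for $f$ yields $\Delta_f\supseteq\overline{D}_{\sigma_1}(0)$.

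I expect the only genuinely delicate point to be the bookkeeping of rational versus irrational discs, needed so that Proposition \ref{proposition one-to-one} (stated for rational discs) can legitimately be applied; the rest is a direct substitution of the estimates. The essential gain is the factor $p^{-1/(p-1)}<1$ furnished by Lemma \ref{lemma case 1}: it is exactly what makes the one-to-one inequality strict for $k\geq 2$ and hence pins down $d=1$, securing injectivity even on the closed disc.
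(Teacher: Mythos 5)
Your proposal is correct and follows essentially the same route as the paper: combine Lemma \ref{lemma bk estimate indiff} with Lemma \ref{lemma case 1} to get $|b_k|\sigma_1^k\leq p^{-1/(p-1)}\sigma_1<|b_1|\sigma_1$, deduce convergence and injectivity of $g$ on $D_{\sigma_1}(0)$ (and on $\overline{D}_{\sigma_1}(0)$ when $g$ converges there) from Proposition \ref{proposition one-to-one}, and finish by noting $\sigma_1<1/a$ so that Lemma \ref{lemma f one-to-one} supplies the bijectivity of $f$. Your extra bookkeeping on rational versus irrational discs is a harmless refinement the paper leaves implicit (and in the irrational case $\overline{D}_{\sigma_1}(0)=D_{\sigma_1}(0)$, so the ``moreover'' clause reduces to the first statement anyway).
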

\begin{proof}

In view of Lemma \ref{lemma order of p in factorial} and Lemma
\ref{lemma case 1} we have
\[
\left ( \limsup |b_k|^{1/k}  \right)^{-1}\geq \sigma_1.
\]
This implies that $g$ converges on the open disc of radius
$\sigma_1$. Moreover, by Lemma \ref{lemma case 1}
\[
|b_k|\sigma_1^k\leq p^{-\frac{1}{p-1}}\sigma_1
<\sigma_1=|b_1|\sigma_1.
\]
It follows by Proposition \ref{proposition one-to-one} that
$g:D_{\sigma_1}(0)\to D_{\sigma_1}(0)$ is a bijection. The strict
inequality $|b_k|\sigma_1^k<|b_1|\sigma_1$ implies that, if $g$
converges on the closed disc $\overline{D}_{\sigma_1}(0)$, then
$g:\overline{D}_{\sigma_1}(0)\to\overline{D}_{\sigma_1}(0)$ is
bijective. Recall that by Lemma \ref{lemma  f one-to-one}
$f:D_{1/a}(0)\to D_{1/a}(0)$ is a bijection. Moreover,
$1/a>\sigma_1$. Consequently, the linearization disc $\Delta_f$ includes
the disc $D_{\sigma_1}(0)$ or $\overline{D}_{\sigma_1}(0)$,
depending on whether the conjugacy function $g$ converges on the
closed disc $\overline{D}_{\sigma_1}(0)$ or not.
\end{proof}


The theorem has some important consequences for linearization discs in
$\mathbb{Q}_p$. In fact, as we will see below, a linearization disc in
$\mathbb{Q}_p$ may coincide with the maximal disc on which $f$ is
one-to-one and even with the region of convergence of $f$. Recall
that the value group $|\mathbb{Q}_p^*|$ contains only integer
powers of $p$. This implies that if $\lambda\in \mathbb{Q}_p$,
then $|1-\lambda ^m|\leq p^{-1}<p^{-1/(p-1)}$ if $p>2$.
Consequently, Theorem \ref{theorem case 1} applies if $\lambda \in
\mathbb{Q}_p$ for some prime $p>2$. In particular, if $f$ is a
power series over $\mathbb{Q}_p$, we have the following corollary.

\begin{corollary}[linearization discs in $\mathbb{Q}_p$]\label{corollary p-adic linearization disc}
Let $f\in \mathcal{F}_{\lambda,a}\cap\mathbb{Q}_p[[x]]$ for some
odd prime $p$. Let $\Delta _f(0,\mathbb{Q}_p)=\Delta_f\cap \mathbb{Q}_p$ be the corresponding
linearization disc, about the origin, in $\mathbb{Q}_p$. Then,
$\Delta_f(0,\mathbb{Q}_p) \supseteq D_{\sigma_1}(0,\mathbb{Q}_p)$. Moreover, if
the conjugacy function $g$ converges on the closed disc
$\overline{D}_{\sigma_1}(0)$, then the linearization disc 
$\Delta_f(0,\mathbb{Q}_p)\supseteq \overline{D}_{\sigma_1}(0,\mathbb{Q}_p)$.
Furthermore, if $\lambda$ is maximal, 
then, the linearization disc $\Delta_f(0,\mathbb{Q}_p)$ is maximal in the sense that $\Delta_f(0,\mathbb{Q}_p)$ is either the open or closed disc of radius $1/a$. In particular,  if
either $\max_{i\geq 2 }|a_i|^{1/(i-1)}$ is attained (as for polynomials) or $f$
diverges on $S_{1/a}(0,\mathbb{Q}_p)$, then $\Delta_f(0,\mathbb{Q}_p)=D_{1/a}(0,\mathbb{Q}_p)$.

\end{corollary}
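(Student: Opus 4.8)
The plan is to read the three assertions off Theorem \ref{theorem case 1}, Lemma \ref{lemma f one-to-one} and Lemma \ref{lemma upper bound Siegel and isometry}, using throughout that $|\mathbb{Q}_p^*|$ consists only of integer powers of $p$. First I would verify that we are in Case I: since $\lambda\in\mathbb{Q}_p$ forces $|1-\lambda^m|\leq p^{-1}$, and for odd $p$ one has $p^{-1}<p^{-1/(p-1)}=R(1)$, the hypothesis $R(0)<|1-\lambda^m|<R(1)$ of Theorem \ref{theorem case 1} holds. Intersecting its two inclusions with $\mathbb{Q}_p$ yields $\Delta_f(0,\mathbb{Q}_p)\supseteq D_{\sigma_1}(0,\mathbb{Q}_p)$, and $\Delta_f(0,\mathbb{Q}_p)\supseteq\overline{D}_{\sigma_1}(0,\mathbb{Q}_p)$ when $g$ converges on $\overline{D}_{\sigma_1}(0)$; this is the content of the first two statements.

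For the maximality claim I would substitute the maximal data $m=p-1$, $|1-\lambda^m|=p^{-1}$ into (\ref{definition of rho1}), obtaining $\sigma_1=a^{-1}p^{-p/(p-1)^2}$. The key elementary inequality is $p<(p-1)^2$ for every odd $p$, so that $0<p/(p-1)^2<1$ and hence the ratio $(1/a)/\sigma_1=p^{\,p/(p-1)^2}$ lies strictly between $1$ and $p$. Because consecutive elements of $|\mathbb{Q}_p^*|$ differ by exactly the factor $p$, the interval $[\sigma_1,1/a)$ therefore contains at most one power of $p$. Writing $M$ for the maximal disc of Lemma \ref{lemma f one-to-one} and invoking $\Delta_f(0,\mathbb{Q}_p)\subseteq M\cap\mathbb{Q}_p$ (Lemma \ref{lemma upper bound Siegel and isometry}), this pins the situation down to one of two cases: either $D_{\sigma_1}(0,\mathbb{Q}_p)$ already equals $M\cap\mathbb{Q}_p$, and maximality is immediate, or $M\cap\mathbb{Q}_p$ exceeds $D_{\sigma_1}(0,\mathbb{Q}_p)$ by precisely its outermost sphere $S_\rho(0,\mathbb{Q}_p)$, where $\rho$ is the largest power of $p$ in $M$.

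I expect this outermost sphere to be the main obstacle. The coefficient bound of Lemma \ref{lemma bk estimate indiff}, even in the sharp form of Lemma \ref{lemma case 1}, cannot help here: the factorial factor pushes $|b_k|\sigma_1^{k-1}$ all the way up to $p^{-1/(p-1)}$ along the indices with $(k-1)/m$ a power of $p$, so the estimate certifies injectivity of $g$ only out to radius $\sigma_1$ and never across a power of $p$ sitting inside $(\sigma_1,1/a)$. To cross it I would discard the crude estimate and use maximality of $\lambda$ structurally. By Lemma \ref{lemma mod pi} (with $e=1$) and Lemma \ref{lemma transitivity multiplier}, $\lambda$ generates $(\mathbb{Z}/p^2\mathbb{Z})^*$ and $T_\lambda$ is minimal on every sphere about $0$ in $\mathbb{Q}_p$, while $f$ restricts to an isometric bijection of $M\cap\mathbb{Q}_p$ leaving each sphere invariant (Lemma \ref{lemma f one-to-one}). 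The point to establish is that $g$ — which over $\mathbb{C}_p$ may already be $d$-to-$1$ on $\overline{D}_\rho(0)$ with $d>1$ in the sense of Proposition \ref{proposition one-to-one} — nevertheless stays one-to-one on the $\mathbb{Q}_p$-rational points of $S_\rho$. Concretely I would factor $g$ on $\overline{D}_\rho(0)$ via Proposition \ref{proposition-discdegree} and argue that the surplus $\mathbb{C}_p$-preimages are never $\mathbb{Q}_p$-rational, because the single-cycle action of $T_\lambda$ transported back by $g$ on the interior leaves at most one $\mathbb{Q}_p$-point in each fibre. This is exactly where the maximal-cycle hypothesis is indispensable, and it is the technical heart of the argument.

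Finally, once $g$ is known to be one-to-one on all of $M\cap\mathbb{Q}_p$, the open/closed dichotomy is immediate from Lemma \ref{lemma f one-to-one}: $\Delta_f(0,\mathbb{Q}_p)=M\cap\mathbb{Q}_p$ is the closed disc of radius $1/a$ in general, and reduces to the open disc $D_{1/a}(0,\mathbb{Q}_p)$ exactly when $\max_{i\geq2}|a_i|^{1/(i-1)}$ is attained or $f$ diverges on $S_{1/a}(0)$, which is the asserted conclusion.
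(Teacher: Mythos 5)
Your handling of the first two assertions is correct and is exactly what the paper does: for $\lambda\in\mathbb{Q}_p$ one has $|1-\lambda^m|\leq p^{-1}<p^{-1/(p-1)}=R(1)$ when $p$ is odd, so Theorem \ref{theorem case 1} applies and one intersects with $\mathbb{Q}_p$. Your computation $\sigma_1=a^{-1}p^{-p/(p-1)^2}$ for maximal $\lambda$ and the inequality $p<(p-1)^2$ are also precisely the right ingredients. The problem is what you do with them next.

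The maximality clause asserts only that $\Delta_f(0,\mathbb{Q}_p)$ is \emph{either} the open \emph{or} the closed disc of radius $1/a$; it deliberately leaves the sphere $S_{1/a}(0,\mathbb{Q}_p)$ undecided. The paper's entire argument is the one-line observation that, since $(1/a)/\sigma_1=p^{p/(p-1)^2}$ lies strictly between $1$ and $p$ and $|\mathbb{Q}_p^*|=p^{\mathbb{Z}}$, the discs $D_{\sigma_1}(0)$ and $D_{1/a}(0)$ have the same $\mathbb{Q}_p$-points; together with $\Delta_f\subseteq\overline{D}_{1/a}(0)$ from Lemma \ref{lemma upper bound Siegel and isometry} this sandwiches $\Delta_f(0,\mathbb{Q}_p)$ between the open and closed discs of radius $1/a$, with nothing in between, and the ``in particular'' clause follows because Lemma \ref{lemma  f one-to-one} forces $\Delta_f\subseteq D_{1/a}(0)$ in those cases. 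Your second branch goes astray at the point where you compare $D_{\sigma_1}(0,\mathbb{Q}_p)$ with $M\cap\mathbb{Q}_p$ instead of with $D_{1/a}(0,\mathbb{Q}_p)$, and then set out to prove the strictly stronger statement $\Delta_f(0,\mathbb{Q}_p)=M\cap\mathbb{Q}_p$, i.e.\@ that the outermost sphere of $M$ is always captured. That step is both unnecessary and unsupported: the assertion that the ``single-cycle action of $T_\lambda$ transported back by $g$ on the interior leaves at most one $\mathbb{Q}_p$-point in each fibre'' over the boundary sphere is never substantiated --- the dynamics on interior spheres gives no control over fibres of $g$ on a sphere where $g$ is not even known to converge, and a Weierstrass factorization of $g$ on $\overline{D}_\rho(0)$ presupposes that convergence. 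The maximal-cycle hypothesis enters only through the arithmetic $m=p-1$, $|1-\lambda^m|=p^{-1}$ that makes $\sigma_1$ large; there is no further ``technical heart'' to the corollary.
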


\begin{proof}
If $\lambda$ is maximal, then $|1-\lambda ^m|=p^{-1}$ and $m=p-1$. Consequently,
$D_{\sigma_1}(0)=\overline{D}_{\sigma_1}(0) =D_{1/a}(0)$,
considered as discs in $\mathbb{Q}_p$.

\end{proof}
Moreover, a power series $f\in \mathcal{F}_{\lambda,a}\cap
\mathbb{Q}_p$ may diverge on $S_{1/a}(0)$. For example, the power
series $f(x)=\lambda x + \sum_{i=2}^{\infty}(a_2)^{i-1}x^i$
converges if and only if $|x|<1/|a_2|=1/a$.

Too see that $f$ may have a zero on the sphere $S_{1/a}(0)$,
consider the following example. Let $f(x)=\lambda x +a_2x^2$. Then
$a=|a_2|$. But $x=-\lambda /a_2\in\mathbb{Q}_p$ is a zero of $f$
located on the sphere $S_{1/a}(0)$ in $\mathbb{Q}_p$.


\begin{remark}  If $p$ is an odd prime
and $f(x)=\lambda x +O(x^2)$ is a power series over
$\mathbb{Z}_p$, with multiplier $\lambda $ such that $|1-\lambda
^m|=p^{-1}$, and $m=p-1$.  Then, the linearization disc in $\mathbb{Q}_p$
includes the open unit disc $D_{1}(0)$. This result was also
obtained in \cite[Proposition 2.2
]{Pettigrew/Roberts/Vivaldi:2001}.
\end{remark}

Let $K$ be an unramified field extension of $\mathbb{Q}_p$. Then
the value group $|K^*|$ contains only integer powers of $p$.
Hence, if $\lambda \in K$, then Theorem \ref{theorem case 1}
applies and we have the following corollary.

\begin{corollary}\label{corollary case 1}
Corollary \ref{corollary p-adic linearization disc} holds for any
unramified extension $K$ of $\mathbb{Q}_p$.
\end{corollary}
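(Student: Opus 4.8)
The plan is to show that, in the proof of Corollary \ref{corollary p-adic linearization disc}, the field $\mathbb{Q}_p$ enters only through the shape of its value group, and that an unramified extension shares exactly this feature. First I would record that, since $K/\mathbb{Q}_p$ is unramified, its ramification index is $e=1$, so by (\ref{def-ramification index}) the value group is $|K^*|=\{p^l:l\in\mathbb{Z}\}$, precisely the set of integer powers of $p$ that occurs for $\mathbb{Q}_p$. Consequently, for any $\lambda\in K$ on the unit sphere that is not a root of unity, the quantity $|1-\lambda^m|$, with $m=m(\lambda)$ as in (\ref{definition of m}), lies in $|K^*|$ and is strictly less than $1$, whence $|1-\lambda^m|\le p^{-1}$.

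The second step is to verify that this places us in Case I. Since $p$ is odd we have $1/(p-1)<1$, so $R(1)=p^{-1/(p-1)}>p^{-1}\ge|1-\lambda^m|$, while $|1-\lambda^m|>0=R(0)$ because $\lambda^m\neq 1$. Thus $R(0)<|1-\lambda^m|<R(1)$, and Theorem \ref{theorem case 1} applies verbatim to any $f\in\mathcal{F}_{\lambda,a}\cap K[[x]]$, giving $\Delta_f\supseteq D_{\sigma_1}(0)$ in $\mathbb{C}_p$, together with $\Delta_f\supseteq\overline{D}_{\sigma_1}(0)$ whenever $g$ converges on $\overline{D}_{\sigma_1}(0)$. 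Intersecting these containments with $K$ yields the first two assertions of Corollary \ref{corollary p-adic linearization disc} with $\mathbb{Q}_p$ replaced by $K$.

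For the maximality assertion I would treat a maximal $\lambda$. Here $|1-\lambda^m|=|\pi|=p^{-1/e}=p^{-1}$ (using $e=1$) and $m=p^{n}-1$, where $n=[K:\mathbb{Q}_p]$ equals the residue degree of $K$ since the extension is unramified. From (\ref{definition of rho1}) one then computes $\sigma_1\,a=p^{-p/((p^{n}-1)(p-1))}$. Because $(p^{n}-1)(p-1)\ge(p-1)^2>p$ for every odd $p$ and every $n\ge 1$, the exponent lies strictly between $0$ and $1$, so $p^{-1}/a<\sigma_1<1/a$. Exactly as in Corollary \ref{corollary p-adic linearization disc}, in a value group consisting only of integer powers of $p$ such a radius determines the same disc about the origin as the radius $1/a$, giving $D_{\sigma_1}(0,K)=\overline{D}_{\sigma_1}(0,K)=D_{1/a}(0,K)$. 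Combined with the upper bound $\Delta_f(0,K)\subseteq\overline{D}_{1/a}(0,K)$ obtained by intersecting Lemma \ref{lemma upper bound Siegel and isometry} with $K$, this forces $\Delta_f(0,K)$ to be the open (or closed) disc of radius $1/a$, and the polynomial/divergence refinement follows as before.

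Most of the argument is a routine transfer, so I do not expect a serious obstacle; the only point needing a little care is the last step, namely checking that the exponent $p/((p^{n}-1)(p-1))$ remains in $(0,1)$ for every residue degree $n\ge 1$, so that $\sigma_1$ always falls in the gap $(p^{-1}/a,\,1/a)$, after which the discreteness of $|K^*|$ collapses the two radii to a single disc.
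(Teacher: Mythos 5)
Your argument is correct and follows the paper's own route: unramifiedness forces $|K^*|=\{p^l:l\in\mathbb{Z}\}$, hence $|1-\lambda^m|\le p^{-1}<R(1)$, so Case I and Theorem \ref{theorem case 1} apply, and for maximal $\lambda$ the discreteness of the value group collapses $D_{\sigma_1}(0,K)$ to $D_{1/a}(0,K)$. The only addition beyond the paper's (very terse) proof is your explicit check that with $m=p^{n}-1$ the exponent $p/\bigl((p^{n}-1)(p-1)\bigr)$ still lies in $(0,1)$, a detail the paper leaves implicit; this is a worthwhile verification but not a different method.
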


The case $p=2$, will be treated in case III below.

\subsection{Case II}

In this section we assume that $s\geq 1$ and
\begin{equation}\label{equation lambda case 2}
R(s)<|1-\lambda ^m|< R(s+1).
\end{equation}
In what follows $\sigma_2$ will be the real number defined by
\begin{equation}\label{definition of rho2}
\sigma_2:=a^{-1}p^{-\frac{1}{m(p-1)p^{s} }} |1-\lambda ^m|^{
\frac{1}{m}(1 + \frac{p-1}{p} s ) }.
\end{equation}

\begin{lemma}\label{lemma case 2}
Suppose $\lambda $ satisfies (\ref{equation lambda case 2}) for
some $s\geq 1$. Then,
\begin{equation}\label{prod ineq case 2}
\left( \prod_{n=1}^{k-1}|1-\lambda ^n| \right )^{-1}a^{k-1}\leq
p^{-\frac{1}{p-1}}\sigma_2^{-(k-1)},
\end{equation}
with equality if $(k-1)/mp^{s+1}$ is an integer power of $p$.
\end{lemma}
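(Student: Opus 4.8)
The plan is to follow the template of the Case I argument (Lemma \ref{lemma case 1}), replacing the simple two-case formula (\ref{distance 1- lambda n case 1}) by the full strength of Lemma \ref{lemma distance 1-lambda m char 0,p}. Writing $\rho=|1-\lambda^m|$ and $N=\lfloor (k-1)/m\rfloor$, the first step is to evaluate the product $\prod_{n=1}^{k-1}|1-\lambda^n|$. By the first statement of Lemma \ref{lemma distance 1-lambda m char 0,p} only the multiples $n=mj$, $1\le j\le N$, contribute a factor different from $1$, and since $p\nmid m$ we have $\nu(n)=\nu(j)$. Because the hypothesis (\ref{equation lambda case 2}) is the strict inequality $R(s)<\rho<R(s+1)$, each such factor is given by the first subcase of statement 2 when $\nu(j)\ge s$, namely $|1-\lambda^{mj}|=|j|\,p^{s}\rho^{p^{s}}=p^{-(\nu(j)-s)}\rho^{p^{s}}$, and by statement 3 when $\nu(j)<s$, namely $|1-\lambda^{mj}|=\rho^{p^{\nu(j)}}$.

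The second and most delicate step is the bookkeeping of the two exponents obtained after multiplying these factors, and this is where I expect the main obstacle to lie. Counting, for each $v\ge 0$, the $\lfloor N/p^{v}\rfloor$ indices $j\le N$ with $\nu(j)\ge v$, I expect the power of $p$ coming from the factors with $\nu(j)\ge s$ to telescope into $\sum_{u\ge s+1}\lfloor N/p^{u}\rfloor=\nu\big((\lfloor N/p^{s}\rfloor)!\big)$, while the exponent of $\rho$ collapses to $A:=\lfloor N/p^{0}\rfloor+(p-1)\sum_{v=1}^{s}p^{v-1}\lfloor N/p^{v}\rfloor$. This should yield the closed form $\prod_{n=1}^{k-1}|1-\lambda^n|=\rho^{A}\,\big|(\lfloor N/p^{s}\rfloor)!\big|$, which for $s=0$ reduces to the expression $|N!|\,\rho^{N}$ used in Lemma \ref{lemma case 1}. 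The delicate points are the cancellation $|mj|\,p^{s}=p^{-(\nu(j)-s)}$, the telescoping of the $\rho$-exponent, and the recognition of the $p$-contribution as a factorial valuation; these are pure combinatorics of floor functions but must be handled carefully.

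Once the closed form is in hand, the bound is routine. I would bound the factorial by Lemma \ref{lemma order of p in factorial}, giving $\big|(\lfloor N/p^{s}\rfloor)!\big|\ge p^{-(\lfloor N/p^{s}\rfloor-1)/(p-1)}$, and use $\lfloor N/p^{v}\rfloor\le N/p^{v}\le (k-1)/(mp^{v})$ together with $N\le (k-1)/m$ to obtain $A\le \frac{k-1}{m}\big(1+\frac{p-1}{p}s\big)$. Since $\rho<1$, this last inequality gives $\rho^{A}\ge \rho^{\frac{k-1}{m}(1+\frac{p-1}{p}s)}$; combining the two estimates and inserting the definition (\ref{definition of rho2}) of $\sigma_2$ produces exactly (\ref{prod ineq case 2}).

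Finally, for the equality clause I would track when all the intermediate inequalities become equalities: this requires $m\mid(k-1)$ (so that $N=(k-1)/m$), $p^{s}\mid N$ (so that every floor $\lfloor N/p^{v}\rfloor$ with $v\le s$ is exact), and $\lfloor N/p^{s}\rfloor$ a power of $p$ (the equality case of Lemma \ref{lemma order of p in factorial}). These three conditions hold simultaneously precisely when $(k-1)/m$ is a power of $p$ that is at least $p^{s}$, which is implied by the stated hypothesis that $(k-1)/(mp^{s+1})$ is an integer power of $p$.
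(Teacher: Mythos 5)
Your proposal is correct and follows essentially the same route as the paper: the same case analysis of $|1-\lambda^{mj}|$ via Lemma \ref{lemma distance 1-lambda m char 0,p}, the same floor-function bookkeeping giving the exponent $\lfloor\tfrac{k-1}{m}\rfloor+\sum_{j=1}^{s}\lfloor\tfrac{k-1}{mp^{j}}\rfloor(p^{j}-p^{j-1})$ of $|1-\lambda^m|$, and the same use of Lemma \ref{lemma order of p in factorial} with the same equality analysis. The only cosmetic difference is that you collect the power of $p$ into the single factorial $\bigl|\lfloor (k-1)/(mp^{s})\rfloor!\bigr|$ whereas the paper writes it as $|N!|\,p^{-N}$ with $N=\lfloor (k-1)/(mp^{s+1})\rfloor$; these coincide by Legendre's formula.
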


\begin{proof}
By lemma \ref{lemma distance 1-lambda m char 0,p}
\begin{equation}
\left |1 -\lambda ^n \right |= \left\{\begin{array}{ll}
1, & \textrm{if \quad $m\nmid n$,}\\
|n|p^s|1-\lambda ^m|^{p^s}, & \textrm{if \quad $m p^{s+1} \mid n$,}\\
|1-\lambda ^m|^{p^{\nu(n)}}, & \textrm{if \quad $m\mid n$ but
$mp^{s+1}\nmid n$}
\end{array}\right.
\end{equation}
in this case. Throughout this proof $N$ will be the integer
\begin{equation}\label{definition of N  case 1}
N=\left \lfloor \frac{k-1}{mp^{s+1}} \right \rfloor.
\end{equation}
Note that
\[
\prod_{mp^{s+1}\mid n}^{k-1}|n|= | mp^{s+1}\cdot 2mp^{s+1}\cdot
\dots |= | N!||mp^{s+1}|^{N},
\]
and since $p\nmid m$,
\begin{equation}\label{product |n| mp^s+1|n}
\prod_{mp^{s+1}\mid n}^{k-1}|n|= | N !|p^{-(s+1)N}.
\end{equation}
Moreover,
\begin{equation}\label{product p^s mp^s+1|n}
\prod_{mp^{s+1}\mid n}^{k-1}p^s|1-\lambda ^m|^{p^s}= p^{sN}
|1-\lambda ^m|^{p^sN},
\end{equation}
and
\begin{equation}\label{product p^nu p^s+1}
\prod_{\substack{ mp^{s+1}\nmid n \\ m\mid n} }^{k-1}|1-\lambda
^m|^{p^{\nu(n)}}= |1-\lambda ^m|^{ \sum_{j=0}^{s} \left (\left
\lfloor \frac{k-1}{mp^{j}}\right \rfloor - \left \lfloor
\frac{k-1}{mp^{j+1}}\right \rfloor \right )p^j }.
\end{equation}
Combining the three products (\ref{product |n| mp^s+1|n}),
(\ref{product p^s mp^s+1|n}), and (\ref{product p^nu p^s+1}) we
obtain
\begin{equation}\label{product 1 -lambda case 2}
\prod_{n=1}^{k-1}|1-\lambda ^n|= |N! |p^{-N} |1-\lambda
^m|^{\Sigma_1},
\end{equation}
where
\[
\Sigma_1 =p^sN+ \sum_{j=0}^s\left (\left \lfloor
\frac{k-1}{mp^{j}}\right \rfloor - \left \lfloor
\frac{k-1}{mp^{j+1}}\right \rfloor \right )p^j.
\]
Simplifying, we obtain
\[
\Sigma_1=\left \lfloor \frac{k-1}{m} \right \rfloor +
\sum_{j=1}^s\left\lfloor \frac{k-1}{mp^j}\right \rfloor
(p^j-p^{j-1}).
\]
Consequently,
\[
\Sigma_1 \leq \frac{k-1}{m}\left (1+s\frac{p-1}{p}\right ),
\]
with equality if $(k-1)/mp^{s+1}$ is an integer power of $p$. By
Lemma \ref{lemma order of p in factorial},
\[
\left |N!\right |p^{-N}\geq
p^{-\frac{N-1}{p-1}-N}=p^{-N\frac{p}{p-1} +\frac{1}{p-1}}\geq
p^{-\frac{k-1}{mp^s(p-1)} +\frac{1}{p-1}},
\]
where each inequality become an equality if $(k-1)/mp^{s+1}$ is an
integer power of $p$. Applying these estimates to the identity
(\ref{product 1 -lambda case 2}) we obtain the inequality
(\ref{prod ineq case 2}) as required.
\end{proof}


By similar arguments as those applied in the proof of Theorem
\ref{theorem case 1}, we obtain the following result.

\begin{theorem}\label{theorem case 2}
Let $f\in \mathcal{F}_{\lambda,a}$ and suppose $\lambda $
satisfies
\[
R(s)<|1-\lambda |<R(s+1),
\]
for some integer $s\geq 1$. Then, $\Delta_f\supseteq
D_{\sigma_2}(0)$. Moreover, if the conjugacy function $g$
converges on the closed disc $\overline{D}_{\sigma_2}(0)$, then
$\Delta_f\supseteq \overline{D}_{\sigma_2}(0)$.
\end{theorem}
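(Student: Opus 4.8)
The plan is to follow the proof of Theorem~\ref{theorem case 1} almost verbatim, with the case~II product estimate replacing the case~I one. First I would combine the two coefficient bounds already available: Lemma~\ref{lemma bk estimate indiff} gives
\[
|b_k|\leq \left( \prod_{n=1}^{k-1}|1-\lambda ^n| \right )^{-1}a^{k-1},
\]
and Lemma~\ref{lemma case 2}, designed precisely for the hypothesis $R(s)<|1-\lambda^m|<R(s+1)$, bounds the right-hand side by $p^{-1/(p-1)}\sigma_2^{-(k-1)}$. Chaining these yields the single estimate $|b_k|\leq p^{-1/(p-1)}\sigma_2^{-(k-1)}$ for all $k\geq 2$, which is the only analytic input the rest of the argument requires.

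From this bound I would read off two consequences. Taking $k$-th roots and passing to the limit superior gives $\limsup_{k}|b_k|^{1/k}\leq \sigma_2^{-1}$, so by~(\ref{radius of convergence}) the conjugacy $g$ converges at least on the open disc $D_{\sigma_2}(0)$. The same bound also produces the \emph{strict} inequality
\[
|b_k|\sigma_2^{k}\leq p^{-1/(p-1)}\sigma_2<\sigma_2=|b_1|\sigma_2,
\]
valid for every $k\geq 2$, which is exactly the coefficient hypothesis of Proposition~\ref{proposition one-to-one}. That proposition then makes $g$ one-to-one on $D_{\sigma_2}(0)$; and since the inequality is strict (so the integer $d$ there equals $1$), whenever $g$ additionally converges on the closed disc $\overline{D}_{\sigma_2}(0)$ it is one-to-one on $\overline{D}_{\sigma_2}(0)$ as well.

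To pass from ``$g$ one-to-one'' to ``contained in $\Delta_f$'' I would note that $\sigma_2<1/a$: in the definition~(\ref{definition of rho2}) both $p^{-1/(m(p-1)p^{s})}$ and $|1-\lambda^m|^{(1/m)(1+\frac{p-1}{p}s)}$ are strictly less than $1$, the latter because $|1-\lambda^m|<R(s+1)<1$ while its exponent is positive. Hence $D_{\sigma_2}(0)$ lies strictly inside the disc $D_{1/a}(0)$ on which $f$ is already bijective by Lemma~\ref{lemma f one-to-one}; combined with the Schr\"{o}der relation $g\circ f=\lambda g$ and the injectivity of $g$, this places $D_{\sigma_2}(0)$ (respectively $\overline{D}_{\sigma_2}(0)$) inside the linearization disc, exactly as in Theorem~\ref{theorem case 1}.

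I do not expect a genuine obstacle here: all the real work sits in Lemma~\ref{lemma case 2}, which is already established, and the theorem is a formal transcription of the case~I argument with $\sigma_1$ replaced by $\sigma_2$. The only point that deserves a moment's care is the open-versus-closed dichotomy, which is decided purely by whether $g$ converges on the boundary sphere $S_{\sigma_2}(0)$; the strict inequality above is what guarantees that in the convergent case one still obtains injectivity on the full closed disc rather than merely a $d$-to-$1$ map.
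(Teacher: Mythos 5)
Your proposal is correct and is essentially the paper's own proof: the paper simply states that Theorem \ref{theorem case 2} follows ``by similar arguments as those applied in the proof of Theorem \ref{theorem case 1}'', which is precisely the transcription you carry out, substituting Lemma \ref{lemma case 2} for Lemma \ref{lemma case 1} and $\sigma_2$ for $\sigma_1$ in the convergence, injectivity, and $\sigma_2<1/a$ steps.
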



\subsection{Case III}

In this section it will be assumed that $s\geq 1$ and
\begin{equation}\label{equation lambda case 3}
|1-\lambda ^m|= R(s).
\end{equation}
In what follows $\sigma_3$ will be the real number defined by
\begin{equation}\label{definition of rho3}
\sigma_3:=\sigma_2\cdot\left ( \frac{|\alpha-\lambda
^m|}{|1-\lambda ^m |} \right )^{1/mp^s},
\end{equation}
where $\alpha\in \Gamma_r$ is chosen such that
$|\alpha-\lambda^m|\leq |\gamma -\lambda^m|$, for all
$\gamma\in\Gamma$.

\begin{lemma}\label{lemma case 3}
Suppose $\lambda $ satisfies (\ref{equation lambda case 3}) for
some $s\geq 1$. Then
\begin{equation}\label{prod ineq case 3}
\left( \prod_{n=1}^{k-1}|1-\lambda ^n| \right )^{-1}a^{k-1}\leq
p^{-\frac{1}{p-1}}\sigma_3^{-(k-1)},
\end{equation}
with equality if $(k-1)/mp^{s}$ is an integer power of $p$.
\end{lemma}

\begin{proof}
By Lemma \ref{lemma distance 1-lambda m char 0,p}
\begin{equation}
\left |1 -\lambda ^n \right |= \left\{\begin{array}{ll}
1, & \textrm{if \quad $m\nmid n$,}\\
|n|p^s|1-\lambda ^m|^{p^s-1}|\alpha -\lambda ^m|, & \textrm{if \quad $m p^{s} \mid n$,}\\
|1-\lambda ^m|^{p^{\nu(n)}}, & \textrm{if \quad $m\mid n$ but
$mp^{s}\nmid n$}
\end{array}\right.
\end{equation}
in this case. Throughout this proof we
let 
$M$ be the integer
\[
M=\left \lfloor \frac{k-1}{mp^s} \right \rfloor.
\]
Note that
\begin{equation}\label{product |n| mp^s|n}
\prod_{mp^{s}\mid n}^{k-1}|n|= |M!|p^{-sM},
\end{equation}
\begin{equation}\label{product p^s mp^s|n}
\prod_{mp^{s}\mid n}^{k-1}p^s|1-\lambda ^m|^{p^s}=
p^{sM}|1-\lambda ^m|^{(p^s-1)M},
\end{equation}
\begin{equation}\label{product alpha mp^s|n}
\prod_{mp^{s}\mid n}^{k-1}|\alpha -\lambda ^m|= |\alpha - \lambda
^m|^{M},
\end{equation}
and
\begin{equation}\label{product p^nu p^s}
\prod_{\substack{ mp^{s}\nmid n \\ m\mid n} }^{k-1}|1-\lambda
^m|^{p^{\nu(n)}}= |1-\lambda ^m|^{ \sum_{j=0}^{s-1} \left (\left
\lfloor \frac{k-1}{mp^{j}}\right \rfloor - \left \lfloor
\frac{k-1}{mp^{j+1}}\right \rfloor \right )p^j }.
\end{equation}
Combining the three products (\ref{product |n| mp^s|n}),
(\ref{product p^s mp^s|n}), (\ref{product alpha mp^s|n}), and
(\ref{product p^s mp^s|n}) we obtain
\begin{equation}\label{product 1 -lambda case 3}
\prod_{n=1}^{k-1}|1-\lambda ^n|=|M!||1-\lambda ^m|^{\Sigma_2}\left
(  \frac{|\alpha -\lambda ^m|}{|1-\lambda ^m|} \right )^{M},
\end{equation}
where
\[
\Sigma_2=p^sM+\sum_{j=0}^{s-1} \left (\left \lfloor
\frac{k-1}{mp^{j}}\right \rfloor - \left \lfloor
\frac{k-1}{mp^{j+1}}\right \rfloor \right )p^j.
\]
Simplifying, we obtain
\[
\Sigma_2=\Sigma_1=\left \lfloor \frac{k-1}{m} \right \rfloor +
\sum_{j=1}^s\left\lfloor \frac{k-1}{mp^j}\right \rfloor
(p^j-p^{j-1}).
\]
As in the proof of Lemma \ref{lemma case 2} we have
\[
\Sigma_1 \leq \frac{k-1}{m}\left (1+s\frac{p-1}{p}\right ),
\]
with equality if $(k-1)/mp^{s}$ is an integer power of $p$. By
Lemma \ref{lemma order of p in factorial},
\[
\left |M!\right |\geq p^{-\frac{M-1}{p-1}}\geq
p^{-\frac{k-1}{mp^s(p-1)} +\frac{1}{p-1}},
\]
where each inequality become an equality if $(k-1)/mp^{s}$ is an
integer power of $p$. Furthermore, by definition
\[
|\alpha -\lambda^ m|\leq |1-\lambda ^m|=R(s).
\]
Applying these estimates to the identity (\ref{product 1 -lambda
case 3}) we obtain the inequality (\ref{prod ineq case 3}) as
required.

\end{proof}

By similar arguments as those applied in the proof of Theorem
\ref{theorem case 1}, we obtain the following result.

\begin{theorem}\label{theorem case 3}
Let $f\in \mathcal{F}_{\lambda,a}$ and suppose
\[
|1-\lambda ^m|=R(s),
\]
for some integer $s\geq 1$. Then, $\Delta_f\supseteq
D_{\sigma_3}(0)$. Moreover, if the conjugacy function $g$
converges on the closed disc $\overline{D}_{\sigma_3}(0)$, then
$\Delta_f\supseteq \overline{D}_{\sigma_3}(0)$.
\end{theorem}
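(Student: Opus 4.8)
The plan is to follow exactly the same structure as the proof of Theorem \ref{theorem case 1}, since the statement of Theorem \ref{theorem case 3} is verbatim the Case-I result with $\sigma_1$ replaced by $\sigma_3$ and the hypothesis $R(0)<|1-\lambda^m|<R(1)$ replaced by $|1-\lambda^m|=R(s)$. The entire arithmetic burden has already been discharged in Lemma \ref{lemma case 3}, which furnishes the coefficient estimate
\[
\left( \prod_{n=1}^{k-1}|1-\lambda ^n| \right )^{-1}a^{k-1}\leq p^{-\frac{1}{p-1}}\sigma_3^{-(k-1)}.
\]
So first I would combine Lemma \ref{lemma bk estimate indiff}, which bounds $|b_k|$ by the left-hand side above, with Lemma \ref{lemma case 3} to conclude
\[
|b_k|\leq p^{-\frac{1}{p-1}}\sigma_3^{-(k-1)},\qquad k\geq 2.
\]
From this, taking $k$th roots and letting $k\to\infty$, one reads off $(\limsup|b_k|^{1/k})^{-1}\geq \sigma_3$, so the conjugacy $g$ converges on the open disc $D_{\sigma_3}(0)$.

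Next I would check the hypothesis of Proposition \ref{proposition one-to-one}. Evaluating the bound at radius $\sigma_3$ gives
\[
|b_k|\sigma_3^{k}\leq p^{-\frac{1}{p-1}}\sigma_3 < \sigma_3 = |b_1|\sigma_3
\]
for every $k\geq 2$, where the strict inequality uses $p^{-1/(p-1)}<1$ and $b_1=1$. Part 2 of Proposition \ref{proposition one-to-one} then shows that $g$ maps the open disc $D_{\sigma_3}(0)$ one-to-one onto $D_{\sigma_3}(0)$. The \emph{strict} inequality $|b_k|\sigma_3^k<|b_1|\sigma_3$ is exactly what is needed to handle the closed disc: it forces $d=\max\{i\ge 1:|b_i|\sigma_3^{i}=|b_1|\sigma_3\}=1$, so by part 1 of Proposition \ref{proposition one-to-one}, \emph{whenever} $g$ additionally converges on $\overline{D}_{\sigma_3}(0)$, it is one-to-one there as well.

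Finally I would assemble the conclusion about $\Delta_f$ itself. Since $\sigma_3<1/a$ (as $\sigma_3\le\sigma_2<1/a$ because the exponents of $p$ and of $|1-\lambda^m|\le 1$ and of $|\alpha-\lambda^m|/|1-\lambda^m|\le 1$ all contribute factors $\le 1$ to $a^{-1}$), Lemma \ref{lemma f one-to-one} guarantees that $f$ is bijective and isometric on $D_{1/a}(0)\supseteq D_{\sigma_3}(0)$, so the Schr\"oder equation and one-to-oneness of $g$ propagate and the linearization disc satisfies $\Delta_f\supseteq D_{\sigma_3}(0)$, upgraded to $\overline{D}_{\sigma_3}(0)$ in the convergent case. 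I do not anticipate a genuine obstacle here, since this is a direct transcription of the Case-I argument; the only point requiring a moment's attention is confirming $\sigma_3<1/a$ so that the linearization disc lies strictly inside the disc of injectivity of $f$, but this follows immediately from the defining formula (\ref{definition of rho3}) together with $|\alpha-\lambda^m|\le|1-\lambda^m|=R(s)<1$.
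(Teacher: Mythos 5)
Your proposal is correct and follows exactly the route the paper intends: the paper's own ``proof'' of Theorem \ref{theorem case 3} is the single remark that it follows ``by similar arguments as those applied in the proof of Theorem \ref{theorem case 1},'' and your write-up is precisely that argument with Lemma \ref{lemma case 1} replaced by Lemma \ref{lemma case 3}. The only point you add beyond the paper is the explicit verification that $\sigma_3<1/a$, which is a harmless (and correct) bit of extra care.
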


\begin{corollary}\label{corollary case 3}
Let $K$ be an unramified extension of $\mathbb{Q}_2$.
Let $f$ be of the form
\[
f(x)=\lambda (x-x_0)+\sum a_i(x-x_0)^i\in K[[x-x_0]] ,  a=\sup |a_i|^{1/(i-1)}.
\] 
Then, the following two statements hold: 
\begin{enumerate}

\item If $|1-\lambda |<1/2$, then the linearization disc $\Delta_f(x_0,K)$ contains the open disc of radius
         $\sigma_1= |1-\lambda |/2a $ about $x_0$.

\item If $|1-\lambda |=1/2$, then the linearization disc the linearization disc $\Delta_f(x_0,K)$ contains the open disc of radius   $\sigma_3=\sqrt{ |1+\lambda|}/a$ about $x_0$.

\end{enumerate}
\end{corollary}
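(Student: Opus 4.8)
The plan is to reduce both statements to the two theorems already proved over $\mathbb{C}_p$ (Theorem \ref{theorem case 1} and Theorem \ref{theorem case 3}) and then to pass from $\mathbb{C}_p$ to $K$ using that $K$ is unramified over $\mathbb{Q}_2$, so that $|K^*|=\{2^n:n\in\mathbb{Z}\}$. First I would normalise $x_0=0$, which costs nothing since by Lemma \ref{lemma linearization disc isometry} the radius of the linearization disc is independent of the fixed point, and regard $f$ as a series in $\mathcal{F}_{\lambda,a}$ over $\mathbb{C}_p$ via the inclusion $K\subset\mathbb{C}_p$; the linearization disc in $K$ is then $\Delta_f\cap K$. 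Because $|1-\lambda|<1$ in both cases, the index $m=m(\lambda)$ of (\ref{definition of m}) equals $1$, and since $R(1)=p^{-1/(p-1)}=1/2$ for $p=2$, the hypothesis $|1-\lambda|<1/2$ is exactly $R(0)<|1-\lambda^m|<R(1)$ (Case I) while $|1-\lambda|=1/2$ is exactly $|1-\lambda^m|=R(1)$ with $s=1$ (Case III).

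For statement (1) I would invoke Theorem \ref{theorem case 1} directly. Substituting $p=2$, $m=1$ into the definition (\ref{definition of rho1}) gives $\sigma_1=a^{-1}2^{-1/(m(p-1))}|1-\lambda^m|^{1/m}=|1-\lambda|/2a$, so $\Delta_f\supseteq D_{\sigma_1}(0)$ in $\mathbb{C}_p$. Intersecting with $K$ exactly as in Corollary \ref{corollary case 1} (the value group of $K$ consists only of integer powers of $2$) yields $\Delta_f(x_0,K)\supseteq D_{\sigma_1}(x_0,K)$, which is the claim.

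For statement (2) the input is Theorem \ref{theorem case 3} with $s=1$, but the first real step is to locate the nearest root of unity $\alpha$. Since $|1-\lambda|=R(1)$, the point $\lambda$ lies on the sphere $S_{R(1)}(1)$, and by Proposition \ref{proposition gamma r} the only nontrivial element of $\Gamma_r$ on this sphere (for $p=2$) is the primitive square root of unity $-1$, with $|1-(-1)|=|2|=1/2=R(1)$. Hence Lemma \ref{lemma closest root of unity} forces $\alpha=-1$ and $|\alpha-\lambda^m|=|1+\lambda|$; note that $-1\in K$, unlike the higher $2^s$-th roots of unity. Feeding $p=2$, $m=1$, $s=1$ and $|\alpha-\lambda^m|=|1+\lambda|$ into (\ref{definition of rho2}) and (\ref{definition of rho3}) produces the Case III radius, and the exact small-divisor formula of Lemma \ref{lemma distance 1-lambda m char 0,p} gives $|1-\lambda^n|=1/2$ for $n$ odd and $|1-\lambda^n|=|n|\,|1+\lambda|$ for $n$ even, whence $\prod_{n=1}^{k-1}|1-\lambda^n|=2^{-(k-1)}|\lfloor(k-1)/2\rfloor!|\,|1+\lambda|^{\lfloor(k-1)/2\rfloor}$.

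The main obstacle is that feeding this product into the generic coefficient bound of Lemma \ref{lemma bk estimate indiff} only reproduces the Case III radius, which is a fixed factor smaller than the asserted $\sqrt{|1+\lambda|}/a$; closing this gap is where the discreteness of $|K^*|$ must be used. Concretely, over $K$ the open disc $D_{\sqrt{|1+\lambda|}/a}(0)$ coincides with a closed disc $\overline{D}_{\rho_0}(0,K)$ of rational radius $\rho_0<\sqrt{|1+\lambda|}/a$, so by the ultrametric identity $g(x)-g(y)=(x-y)\bigl(1+\sum_{k\geq2}b_k(x^{k-1}+\cdots+y^{k-1})\bigr)$ it suffices to verify $|b_k|\rho_0^{k-1}<1$ at the single radius $\rho_0$ rather than at every radius up to $\sqrt{|1+\lambda|}/a$. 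To make this inequality hold one must sharpen Lemma \ref{lemma bk estimate indiff}: the multinomial coefficients $l!/(\alpha_1!\cdots\alpha_k!)$ occurring in the recursion (\ref{bk-equation}) are, for $p=2$, frequently divisible by $2$, and tracking this extra $2$-adic divisibility is precisely what compensates the very small even-index divisors $|1-\lambda^{k-1}|=|k-1|\,|1+\lambda|$ and forces $|b_k|\rho_0^{k-1}<1$. With that refined estimate in hand, the injectivity of $g$ on $\overline{D}_{\rho_0}(0,K)=D_{\sqrt{|1+\lambda|}/a}(0,K)$ follows as in Proposition \ref{proposition one-to-one}, and intersecting with $K$ finishes the proof. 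I expect this $2$-adic refinement of the coefficient bound to be the crux; the case-splitting and the identification $\alpha=-1$ are routine by comparison.
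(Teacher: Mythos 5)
Your treatment of statement (1), and all of the preparatory identifications for statement (2) ($m=1$, $R(1)=1/2$, $s\leq 1$ by unramifiedness, $\alpha=-1$, and the exact small divisors $|1-\lambda^n|=1/2$ for $n$ odd, $|n|\,|1+\lambda|$ for $n$ even), coincide with the paper's proof and are correct. The divergence is in the last step of statement (2). The paper applies Theorem \ref{theorem case 3} to obtain $\sigma_3=\sqrt{|1+\lambda|}/(2^{3/2}a)$ and then asserts that, $K$ being unramified, the factor $2^{3/2}$ may be dropped. You are right to be suspicious of that step: discreteness of $|K^*|$ identifies the open disc of radius $r$ in $K$ with the closed disc of radius $\rho_0$, the largest element of $|K^*|$ below $r$, and this absorbs a factor of at most $2$, never $2^{3/2}$; an interval of length $3/2$ on the $\log_2$ scale always contains an integer, so there genuinely are points $x\in K$ with $\sigma_3\leq |x|<\sqrt{|1+\lambda|}/a$ that Theorem \ref{theorem case 3} says nothing about.

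Your proposed repair, however, is not a proof. The entire burden of statement (2) is placed on the claim that the multinomial coefficients $l!/(\alpha_1!\cdots\alpha_k!)$ in the recursion (\ref{bk-equation}) carry enough extra $2$-divisibility to sharpen Lemma \ref{lemma bk estimate indiff} by a factor of order $2^{(k-1)/2}$ --- which is exactly what is needed to force $|b_k|\rho_0^{k-1}<1$ at the single radius $\rho_0$ --- but you never establish this. The claim is plausible (for $f=\lambda x+a_2x^2$ one checks by hand that $|b_3|,|b_4|,|b_5|$ beat the generic bound by factors $2,2,4$), but the recursion mixes terms with very different divisor patterns, and a uniform gain of this size for an arbitrary $f\in\mathcal{F}_{\lambda,a}$ is precisely the nontrivial content of the statement; asserting that the divisibility ``compensates'' the small even-index divisors is naming the difficulty, not resolving it. As written, your argument establishes only that $\Delta_f(x_0,K)$ contains the open disc of radius $2\rho_1$, where $\rho_1$ is the largest element of $|K^*|$ below $\sigma_3$; this radius can be as small as $\sqrt{|1+\lambda|}/(2^{3/2}a)$ and is in every case strictly smaller than the claimed $\sqrt{|1+\lambda|}/a$.
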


\begin{proof}
As $p=2$ we must have $m=1$ and since $K$ is unramifed we must have $s\leq1$. The first statement is then a direct consequence of Theorem \ref{theorem case 1}. As to the second statement, $|1-\lambda| =1/2$, Theorem \ref{theorem case 3} applies with $m=1$, $s=1$ and $\alpha =-1$. Hence, 
$\sigma_3=\sqrt{ |1+\lambda|}/2^{3/2}a$, and since $K$ is unramified we may as well exclude the factor 
$2^{3/2}$.
\end{proof}

\begin{remark}
Note that $\sqrt{ |1+\lambda|}\leq 1/2$, so even if $\lambda $ is maximal (as in the second  statement  of Corollary \ref{corollary case 3} above), it seems that the radius of the linearization disc in $\mathbb{Q}_2$ may not be the maximal, $1/a$ as obtained for $\mathbb{Q}_p$ with $p$ odd in Corollary 
\ref{corollary p-adic linearization disc}.
   
\end{remark}

\subsection{Statement of the general estimate}

Suppose $\lambda ^m$ belongs to the annulus
\[
\{z\in\mathbb{C}_p:R(s)<|1-z|<R(s+1)\}.
\]
Then, in view of Lemma \ref{lemma closest root of unity},
$\alpha=1$ is the closest root of unity to $\lambda ^m$. It
follows that $\sigma_3=\sigma_2$ in this case. Consequently, the
estimate $\sigma_3$ of the radius of the linearization disc, holds for all $\lambda $
such that $R(s)\leq |1-\lambda ^m|<R(s+1)$ for some $s\geq 1$.
Furthermore, if we put $s=0$ and $\alpha =1$, then
$\sigma_3=\sigma_1$.

Hence, $\sigma_3$ may serve as a general bound if we include the
case $s=0$. Recall that, by definition, $R(0)=0$.
Our estimates can thus be summarized according to the following
theorem.
\begin{theorem}\label{theorem general estimate}
Let $f\in \mathcal{F}_{\lambda,a}$.
Suppose $\lambda $ is not a root of unity, and
\[
R(s)\leq |1-\lambda ^m|<R(s+1),
\]
for some $s\geq 0$. Then, the linearization disc $\Delta_f\supseteq
D_{\sigma}(0)$ where

\begin{equation}\label{definition sigma}
\sigma=\sigma(\lambda,a):=a^{-1}R(s+1)^{\frac{1}{m}} |1-\lambda
^m|^{ \frac{1}{m}(1 + \frac{p-1}{p} s ) }\left (
\frac{|\alpha-\lambda ^m|}{|1-\lambda ^m |} \right )^{1/mp^s}.
\end{equation}
Moreover, if the conjugacy function converges on the closed disc
$\overline{D}_{\sigma}(0)$, then $\Delta_f\supseteq
\overline{D}_{\sigma}(0)$.
\end{theorem}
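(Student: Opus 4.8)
The plan is to deduce this unified statement directly from the three case-specific theorems (Theorems \ref{theorem case 1}, \ref{theorem case 2}, and \ref{theorem case 3}) already established, once I verify that the single expression $\sigma$ in (\ref{definition sigma}) coincides, case by case, with the quantities $\sigma_1$, $\sigma_2$, and $\sigma_3$ of (\ref{definition of rho1}), (\ref{definition of rho2}), and (\ref{definition of rho3}). The entire content is therefore the observation, anticipated in the preceding subsection, that $\sigma=\sigma_3$ for every admissible $s\geq 0$ and every choice of nearest root of unity $\alpha$, so that only one estimate is really at stake.

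First I would record the algebraic identity that glues the pieces together. By the definition (\ref{definition R(s) extended}) of $R$, one has $R(s+1)=p^{-1/(p^s(p-1))}$, so that $R(s+1)^{1/m}=p^{-1/(m(p-1)p^s)}$. This is precisely the leading exponential factor appearing in $\sigma_2$, and hence $\sigma$ as written in (\ref{definition sigma}) is literally $\sigma_3$ as defined in (\ref{definition of rho3}). It then remains only to see that, in each of the three regimes for $|1-\lambda^m|$, the relevant case theorem applies with this common $\sigma$.

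Next I would run the case analysis on the location of $|1-\lambda^m|$ relative to the spheres $R(s)$. When $s=0$, the hypothesis reads $0<|1-\lambda^m|<R(1)$, strict on the left since $\lambda^m$, like $\lambda$, is not a root of unity; by Lemma \ref{lemma closest root of unity} the nearest root of unity is $\alpha=1$, so the correction factor in (\ref{definition of rho3}) equals $1$, the convention $R(0)=0$ applies, and $\sigma$ collapses to $\sigma_1$, whence Theorem \ref{theorem case 1} yields $\Delta_f\supseteq D_{\sigma}(0)$. When $s\geq 1$ and the inequality is strict, $R(s)<|1-\lambda^m|<R(s+1)$, Lemma \ref{lemma closest root of unity} again forces $\alpha=1$, the correction factor is again $1$, so $\sigma=\sigma_2$ and Theorem \ref{theorem case 2} applies. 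Finally, when $|1-\lambda^m|=R(s)$ with $s\geq 1$, the full expression $\sigma=\sigma_3$ is needed and Theorem \ref{theorem case 3} applies verbatim. In every case the strict inequality $|b_k|\sigma^k<|b_1|\sigma$ produced in those proofs also delivers the closed-disc statement whenever $g$ converges on $\overline{D}_{\sigma}(0)$.

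The step I expect to require the most care is the bookkeeping at the regime boundaries: checking that the left endpoint $R(s)\leq|1-\lambda^m|$ is correctly split into the strict interior, handled by Cases I and II with $\alpha=1$, and the boundary value $|1-\lambda^m|=R(s)$, handled by Case III, and confirming that the $s=0$ convention together with $\alpha=1$ reproduces $\sigma_1$ exactly. None of this is deep, but it is the only place where an off-by-one in the exponent of $R$, or a missed application of Lemma \ref{lemma closest root of unity}, could silently break the unification.
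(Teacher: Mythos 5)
Your proposal is correct and follows essentially the same route as the paper: the author likewise obtains Theorem \ref{theorem general estimate} by observing that $\sigma$ in (\ref{definition sigma}) is literally $\sigma_3$, that Lemma \ref{lemma closest root of unity} forces $\alpha=1$ (hence $\sigma_3=\sigma_2$) in the open annulus $R(s)<|1-\lambda^m|<R(s+1)$, and that setting $s=0$, $\alpha=1$ recovers $\sigma_1$, so the three case theorems cover the full range $R(s)\leq|1-\lambda^m|<R(s+1)$. Your extra care at the regime boundaries and the closed-disc statement matches what Theorems \ref{theorem case 1}--\ref{theorem case 3} already provide.
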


\subsection{Asymptotic behavior of the estimate of the radius of the linearization disc}\label{section asympt behav siegel rad}

In this section we consider the following question. What happens
to the estimate
\[
\sigma=a^{-1}p^{-\frac{1}{m(p-1)p^{s} }} |1-\lambda ^m|^{
\frac{1}{m}(1 + \frac{p-1}{p} s ) }\left ( \frac{|\alpha-\lambda
^m|}{|1-\lambda ^m |} \right )^{1/mp^s},
\]
of the radius of the linearization disc, as $m$ or $s$ goes to infinity? We will show
that in each of these two cases $\sigma$ approach $1/a$. As stated
in Lemma \ref{lemma upper bound Siegel and isometry}, for $f\in
\mathcal{F}_{\lambda,a}$, the value $1/a$ is the maximal radius of
a linearization disc. On the other hand, if $s$ and $m$ are fixed, then
$\sigma \to 0$ as $|\alpha -\lambda ^m|\to 0$.


The result is based on the following Lemma.
\begin{lemma}\label{lemma bound for sigma}
For all $s\geq 0$
\[
\sigma\geq a^{-1}p^{-\frac{1+s(p-1)}{m(p-1)p^s}}|1-\lambda
^m|^{\frac{1}{m}}\left ( \frac{|\alpha-\lambda ^m|}{|1-\lambda ^m
|} \right )^{1/mp^s}.
\]
In particular,
\[
\sigma\geq a^{-1}p^{-\frac{1}{m(p-1)}}|\alpha -\lambda
^m|^{\frac{1}{m}}.
\]
\end{lemma}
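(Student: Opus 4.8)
The plan is to prove both inequalities by straightforward manipulation of the exponents in the definition \eqref{definition sigma} of $\sigma$, using only two facts that are already available: the standing case hypothesis $R(s)\le|1-\lambda^m|<R(s+1)$ from Theorem~\ref{theorem general estimate}, and the inequality $|\alpha-\lambda^m|\le|1-\lambda^m|$ recorded in Remark~\ref{remark lemma distance 1-lambda m char 0,p}. First I would substitute $R(s+1)^{1/m}=p^{-1/(mp^s(p-1))}$ into \eqref{definition sigma}, so that $\sigma$ is displayed as an explicit product of a power of $p$, a power of $|1-\lambda^m|$, and the factor $\left(|\alpha-\lambda^m|/|1-\lambda^m|\right)^{1/mp^s}$.

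For the first inequality I would form the quotient of $\sigma$ by the proposed lower bound. The factors $a^{-1}$ and $\left(|\alpha-\lambda^m|/|1-\lambda^m|\right)^{1/mp^s}$ are common to both and cancel, and a short count of exponents reduces the claim to
\[
p^{\,s/(mp^s)}\,|1-\lambda^m|^{\,s(p-1)/(mp)}\ge 1.
\]
When $s=0$ this is an equality. For $s\ge 1$ I would invoke the case hypothesis in the form $|1-\lambda^m|\ge R(s)=p^{-1/(p^{s-1}(p-1))}$; raising this to the exponent $s(p-1)/(mp)$ gives precisely $|1-\lambda^m|^{s(p-1)/(mp)}\ge p^{-s/(mp^s)}$, which cancels the displayed power of $p$ and yields the bound.

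For the ``in particular'' statement I would start from the first lower bound and compare it, in the same way, against $a^{-1}p^{-1/(m(p-1))}|\alpha-\lambda^m|^{1/m}$. After cancelling $a^{-1}$ and collecting the exponents of $p$, $|1-\lambda^m|$ and $|\alpha-\lambda^m|$, the inequality reduces to
\[
p^{\,(p^s-1-s(p-1))/(m(p-1)p^s)}\left(\frac{|1-\lambda^m|}{|\alpha-\lambda^m|}\right)^{(p^s-1)/(mp^s)}\ge 1.
\]
Here the second factor is at least $1$, because $|\alpha-\lambda^m|\le|1-\lambda^m|$ makes the base $\ge 1$ and the exponent $(p^s-1)/(mp^s)$ is nonnegative, so it remains only to check that the exponent of $p$ is nonnegative, i.e.\ that $p^s-1\ge s(p-1)$.

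The only genuinely non-mechanical point is this last elementary inequality, which I would dispatch via the telescoping identity $p^s-1-s(p-1)=(p-1)\sum_{j=0}^{s-1}(p^j-1)$, each summand being nonnegative. Thus the main obstacle here is not conceptual but organizational: keeping the several nested fractional exponents straight. Structuring the argument as ``cancel the common factors, then reduce to a single inequality in $p$ and $|1-\lambda^m|$'' contains the bookkeeping and isolates the two substantive inputs (the lower bound $|1-\lambda^m|\ge R(s)$ and the positivity of $p^s-1-s(p-1)$) at the very end.
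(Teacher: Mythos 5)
Your proposal is correct and follows essentially the same route as the paper: both arguments rest on exactly the two inputs you isolate, namely $|1-\lambda^m|\ge R(s)=p^{-1/(p^{s-1}(p-1))}$ for the first inequality and $|\alpha-\lambda^m|\le|1-\lambda^m|$ (together with $p^s-1\ge s(p-1)$) for the second. The paper's version is merely terser, leaving the exponent bookkeeping and the elementary inequality $p^s\ge 1+s(p-1)$ implicit, which you have correctly supplied.
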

\begin{proof}
Recall that for $s=0$, we have that $|\alpha -\lambda
^m|=|1-\lambda|$. This completes the case $s=0$.

For $s\geq 1$, we have $|1-\lambda ^m|\geq p^{-1/(p^{s-1}(p-1))}$,
and  by the definition of $\sigma$
\[
\sigma\geq a^{-1}p^{-\frac{1+s(p-1)}{m(p-1)p^s}}|1-\lambda
^m|^{\frac{1}{m}}\left ( \frac{|\alpha-\lambda ^m|}{|1-\lambda ^m
|} \right )^{1/mp^s}.
\]
Recall from Remark \ref{remark lemma distance 1-lambda m char 0,p}
that we always have $|\alpha -\lambda ^m |\leq |1-\lambda ^m|$.
Hence, we also obtain the following bound for $\sigma$
\[
\sigma\geq  a^{-1}p^{-\frac{1}{m(p-1)}}|\alpha -\lambda
^m|^{\frac{1}{m}},
\]
as required.
\end{proof}
Note that if $|1-\lambda ^m|\neq R(s)$ for all $s\geq 0$, then
$\alpha =1$. Hence, by increasing $s$, we push $\lambda ^m$
further away from the closest root of unity, and make $\sigma $
closer to its maximum value $1/a$. Loosely speaking, according to
the the Lemma, the farther $\lambda ^m$ is from the `closest' root
of unity, the closer $\sigma$ is to its maximum value $1/a$. On
the other hand, if $s$ and $m$ are fixed, then $\sigma \to 0$ as
$|\alpha -\lambda ^m|\to 0$. In particular, we have the following
theorem.

\begin{theorem}\label{theorem asymptotic behavior sigma}
Let $|\alpha -\lambda ^m|$ be fixed. Then, the estimate $\sigma$
of the radius of the linearization disc goes to $1/a$ as $m$ or $s$ goes to infinity.
If $s$ and $m$ are fixed, then $\sigma \to 0$ as $|\alpha -\lambda
^m|\to 0$.
\end{theorem}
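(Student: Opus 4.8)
The plan is to bracket $\sigma$ between a lower bound that tends to the target value and the trivial upper bound $\sigma\leq 1/a$, and then squeeze. First I would record that $\sigma\leq 1/a$: in the defining expression (\ref{definition sigma}) every factor following $a^{-1}$ is at most $1$, since $R(s+1)<1$, $|1-\lambda^m|<1$, and $|\alpha-\lambda^m|/|1-\lambda^m|\leq 1$ by Remark \ref{remark lemma distance 1-lambda m char 0,p}, each raised to a strictly positive power. Thus the entire task reduces to showing that the appropriate lower bound from Lemma \ref{lemma bound for sigma} approaches the claimed limit.

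For the limit $m\to\infty$ I would use the second, simpler estimate of Lemma \ref{lemma bound for sigma}, namely $\sigma\geq a^{-1}p^{-1/(m(p-1))}|\alpha-\lambda^m|^{1/m}$. With $|\alpha-\lambda^m|$ held fixed, both $p^{-1/(m(p-1))}\to 1$ and $|\alpha-\lambda^m|^{1/m}\to 1$ as $m\to\infty$, each being a vanishing exponent applied to a fixed positive base. Hence the lower bound tends to $1/a$, and combined with $\sigma\leq 1/a$ the squeeze gives $\sigma\to 1/a$.

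For $s\to\infty$ the second estimate is inert, so I would instead invoke the first estimate of Lemma \ref{lemma bound for sigma}, $\sigma\geq a^{-1}p^{-(1+s(p-1))/(m(p-1)p^s)}|1-\lambda^m|^{1/m}(|\alpha-\lambda^m|/|1-\lambda^m|)^{1/mp^s}$. The key observation—which I expect to be the main obstacle—is that $s$ is \emph{not} an independent parameter: the defining constraint $R(s)\leq|1-\lambda^m|<R(s+1)$ forces $|1-\lambda^m|\geq R(s)$, and since $R(s)=p^{-1/(p^{s-1}(p-1))}\to 1$ by (\ref{definition R(s) extended}), we obtain $|1-\lambda^m|\to 1$. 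Consistency with $|\alpha-\lambda^m|$ fixed then requires $\alpha\neq 1$, i.e.\ $\lambda^m$ sitting at the fixed distance $|\alpha-\lambda^m|$ from a primitive $p^s$th root of unity on the sphere $S_{R(s)}(1)$, which is admissible once $R(s)$ exceeds that distance. Granting this, each factor after $a^{-1}$ tends to $1$: the $p$-power because its exponent $(1+s(p-1))/(m(p-1)p^s)\to 0$; the factor $|1-\lambda^m|^{1/m}\to 1$; and the ratio factor because its exponent $1/(mp^s)\to 0$ while its base stays bounded. The squeeze against $\sigma\leq 1/a$ again yields $\sigma\to 1/a$.

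Finally, for fixed $s,m$ with $|\alpha-\lambda^m|\to 0$, I would argue directly from (\ref{definition sigma}). With $s\geq 1$ and $m$ fixed, $|\alpha-\lambda^m|\to 0$ again forces $\alpha\neq 1$, whence $|1-\lambda^m|=R(s)$ is a fixed positive constant; consequently $R(s+1)$, $|1-\lambda^m|$, $s$ and $m$ are all frozen, and (\ref{definition sigma}) takes the form $\sigma=C\,|\alpha-\lambda^m|^{1/(mp^s)}$ for a fixed positive constant $C$. Since the exponent $1/(mp^s)$ is strictly positive, $\sigma\to 0$ as $|\alpha-\lambda^m|\to 0$, completing the proof.
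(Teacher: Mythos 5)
Your proof is correct and follows essentially the same route as the paper: the paper's argument is exactly Lemma \ref{lemma bound for sigma} together with the trivial upper bound $\sigma\leq 1/a$ from Lemma \ref{lemma upper bound Siegel and isometry}, squeezing the two together. You merely spell out details the paper leaves implicit, most usefully the observation that $R(s)\leq|1-\lambda^m|$ forces $|1-\lambda^m|\to 1$ as $s\to\infty$ and that a fixed $|\alpha-\lambda^m|$ then requires $\alpha\neq 1$ with $\lambda^m$ on the sphere $S_{R(s)}(1)$.
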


\subsection{Maximal linearization discs in finite extensions of  $\mathbb{Q}_p$}
Let $K$ be a finite extension of $\mathbb{Q}_p$, and let  $f\in \mathcal{F}_{\lambda,a}\cap K[[x]]$. The disc  
\[
\Delta_f(0,K)=\Delta_f\cap K
\]
will be refered to as the corresponding linearization disc in $K$. We say
that   $\Delta_f(0,K)$ is \emph{maximal} if it contains the open disc $D_{1/a}(0,K)=D_{1/a}(0)\cap K$.

\begin{theorem}\label{theorem maximal linearization disc in K }
Let $K$ be a finite extension of $\mathbb{Q}_p$, with ramification index $e$. Let  $f\in \mathcal{F}_{\lambda,a}\cap K[[x]]$ and let $s$ be the integer for which
$R(s)<|1-\lambda^m|\leq R(s+1)$.
Let  $\epsilon$ be the integer satisfying $\nu(1-\lambda^m )=\epsilon/e$. Suppose that
\begin{equation}\label{general condition maximal linearization disc}
s  < \left (\frac{m}{\epsilon }-2\right)\frac{p}{p-1} - \nu\left ( \frac{\alpha - \lambda ^{m}}{1-\lambda^{m}} 
\right ).
\end{equation}
Then, the linearization disc $\Delta_f(x_0,K)$ is maximal. In particular,  if
either $\max_{i\geq 2 }|a_i|^{1/(i-1)}$ is attained (as for polynomials) or $f$
diverges on $S_{1/a}(x_0,K)$, then $\Delta_f(x_0,K)=D_{1/a}(x_0,K)$.
\end{theorem}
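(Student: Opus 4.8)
The plan is to deduce maximality from the general lower bound of Theorem~\ref{theorem general estimate} by a purely valuation-theoretic comparison with the discrete value group $|K^{*}|=\{p^{l/e}:l\in\mathbb{Z}\}$. First I would invoke Theorem~\ref{theorem general estimate} to obtain $\Delta_f\supseteq D_{\sigma}(0)$ in $\mathbb{C}_p$, and hence $\Delta_f(0,K)=\Delta_f\cap K\supseteq D_{\sigma}(0,K)$, with $\sigma=\sigma(\lambda,a)$ given by (\ref{definition sigma}). Since Lemma~\ref{lemma upper bound Siegel and isometry} already gives $\Delta_f\subseteq\overline{D}_{1/a}(0)$, everything reduces to showing that $D_{\sigma}(0,K)$ is \emph{not strictly smaller} than $D_{1/a}(0,K)$; the reverse inclusion and the ``in particular'' clause will then follow from Lemma~\ref{lemma upper bound Siegel and isometry} and Lemma~\ref{lemma f one-to-one}, which identify the maximal one-to-one disc as the \emph{open} disc $D_{1/a}(0)$ exactly when $a$ is attained or $f$ diverges on $S_{1/a}(0)$.

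The key reduction is the following. Every $x\in K$ has $|x|\in|K^{*}|$, a discrete set whose consecutive elements differ by the factor $p^{1/e}$. Hence $D_{\sigma}(0,K)=D_{1/a}(0,K)$ as soon as the annulus of radii $[\sigma,1/a)$ contains no element of $|K^{*}|$. Writing $\sigma=a^{-1}p^{-t}$ --- note that $\sigma a$ is an exact (rational) power of $p$, being a product of the absolute values of $R(s+1)$, $1-\lambda^m$ and $\alpha-\lambda^m$ --- this is guaranteed, at least when $1/a\in|K^{*}|$, precisely by the single-gap condition $\sigma a>p^{-1/e}$, i.e.\ $t<1/e$: then $\sigma$ and $1/a$ lie in one gap of the value group. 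So I would aim to prove $t<1/e$.

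Then comes the computational heart, which is also where I expect the main obstacle. Reading off $t=\nu(\sigma a)$ from (\ref{definition sigma}) and setting $\epsilon=e\,\nu(1-\lambda^m)$ and $\delta=\nu\big((\alpha-\lambda^m)/(1-\lambda^m)\big)$, one gets
\[
t=\nu(\sigma a)=\frac{1}{m p^{s'}(p-1)}+\frac{\epsilon}{em}\Big(1+\frac{p-1}{p}\,s'\Big)+\frac{\delta}{mp^{s'}},
\]
where the delicate point is that the index $s'$ occurring in (\ref{definition sigma}) obeys the convention $R(s')\le|1-\lambda^m|<R(s'+1)$ of Theorem~\ref{theorem general estimate}, whereas the hypothesis is stated with $s$ defined by $R(s)<|1-\lambda^m|\le R(s+1)$; these agree on the open annuli but differ by one, $s'=s+1$, in the boundary case $|1-\lambda^m|=R(s+1)$ (Case III, where $\delta$ may be positive). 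I would therefore split into two subcases. On an open annulus $\alpha=1$, $\delta=0$, $s'=s$, and the defining inequalities of $s$ give the \emph{strict} bound $e/(p^{s}(p-1))<\epsilon$; multiplying $t<1/e$ by $em$ reduces it to $m>\epsilon+e/(p^{s}(p-1))+\tfrac{(p-1)\epsilon}{p}s$, which follows with room to spare from $m>2\epsilon+\tfrac{(p-1)\epsilon}{p}s$. In the boundary case $s'=s+1$ one has the \emph{exact} relation $e=\epsilon p^{s}(p-1)$; substituting it collapses $t<1/e$, after multiplication by $em$, to $m>2\epsilon+\tfrac{(p-1)\epsilon}{p}s+\tfrac{(p-1)\epsilon}{p}\delta$, which is exactly the hypothesis (\ref{general condition maximal linearization disc}) rearranged. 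Thus the bound is tight on Case III and the stated condition is precisely the sharp form of $t<1/e$.

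Finally I would assemble the pieces: $t<1/e$ gives $\Delta_f(0,K)\supseteq D_{\sigma}(0,K)=D_{1/a}(0,K)$, i.e.\ maximality; combining this with $\Delta_f\subseteq\overline{D}_{1/a}(0)$ from Lemma~\ref{lemma upper bound Siegel and isometry} and the identification of the maximal bijective disc in Lemma~\ref{lemma f one-to-one}, the case where $\max_{i\ge2}|a_i|^{1/(i-1)}$ is attained or $f$ diverges on $S_{1/a}(0)$ forces $\Delta_f(0,K)=D_{1/a}(0,K)$. The main obstacle is precisely the reconciliation of the two conventions for $s$ together with the valuation bookkeeping that makes the hypothesis come out as the sharp version of $t<1/e$; a secondary point to watch is that the clean single-gap argument uses $1/a\in|K^{*}|$, which holds in the cases covered by the ``in particular'' clause.
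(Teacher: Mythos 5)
Your proposal is correct and follows essentially the same route as the paper's proof: reduce maximality to the valuation inequality $\nu(a\sigma)<1/e$ and verify it by splitting into the open-annulus case (where $\alpha=1$ and the strict bound $e<\epsilon p^{s}(p-1)$ gives room to spare) and the boundary case $|1-\lambda^m|=R(s+1)$ (where $e=\epsilon p^{s}(p-1)$ makes the hypothesis come out exactly sharp). You are in fact slightly more explicit than the paper on the two points you flag, namely the shift $s'=s+1$ between the two conventions for $s$ and the value-group gap argument needed to pass from $\nu(a\sigma)<1/e$ to $D_{\sigma}(0,K)\supseteq D_{1/a}(0,K)$, both of which the paper leaves implicit.
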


\begin{proof}
We first consider the case $R(s)<|1-\lambda^m|< R(s+1)$. Consequently,
$\epsilon p^{s-1}(p-1)<e<\epsilon p^{s}(p-1)$. In view of (\ref{definition sigma})
with $\alpha = 1$,
\[
\nu(a\sigma)=\frac{1}{m(p-1)p^s}+ \frac{\epsilon(1+ s(p-1)/p)}{em}=
\frac{e+\epsilon p^s(p-1)(1+s(p-1)/p)}{em(p-1)p^s},
\]
and since $e<\epsilon p^s(p-1)$ we have
\[
\nu (a\sigma)<\frac{1}{e}\cdot \frac{\epsilon (2+s(p-1)/p)}{m}.
\]
It follows that 
\[
\nu(a\sigma)<1/e
\]
if $s<p(m/\epsilon -2)/(p-1)$ as required.

We now consider the case $|1-\lambda^m|= R(s+1)$. Hence,
$e=\epsilon p^{s}(p-1)$. In view of (\ref{definition sigma}),
\[
\nu(a\sigma)=\frac{1}{m(p-1)p^{s+1}}+ \frac{\epsilon(1+ (s+1)(p-1)/p)}{em}+\frac{\nu}{mp^{s+1}}.
\]
Hence,
\[
\nu(a\sigma)=
\frac{e+\epsilon p^{s+1}(p-1)(1+(s+1)(p-1)/p)+e\nu (p-1)}{em(p-1)p^{s+1}},
\]
and since $e=\epsilon p^s(p-1)$ we have
\[
\nu (a\sigma)=\frac{1}{e}\cdot \frac{\epsilon p(1/p+1+(s+1)(p-1)/p)+(p-1)\nu }{mp},
\]
or equivalently,
\[
\nu (a\sigma)=\frac{1}{e}\cdot \frac{\epsilon (2+s(p-1)/p)+\nu (p-1)/p }{m}.
\]
It follows that 
\[
\nu(a\sigma)<1/e
\]
if $s<p(m/\epsilon -2)/(p-1)-\nu$ as required.
\end{proof}

One might ask whether the condition (\ref{general condition maximal linearization disc}) is really 
necessary  or just a consequence of lack of precision in our estimates of the radius of the linearization disc.
However, in section \ref{section quadratic case} (Corollary \ref{corollary quadratic power series}) we show that there are examples where  the condition (\ref{general condition maximal linearization disc}) is not satisfied and the corresponding linearization disc is strictly contained in the disc $D_{1/a}(0)$.   

\begin{corollary}
Let $K$ be a finite extension of $\mathbb{Q}_p$ of degree $n$, with ramification index $e$, and residue field  $k$ of  degree $[k:\mathbb{F}_p]=n/e$. Let $\lambda $ be maximal,  $f\in \mathcal{F}_{\lambda,a}\cap K[[x]]$, and let $s$ be the integer for which $R(s)<|1-\lambda^{p^{n/e}-1}|\leq R(s+1)$.
Suppose that
\begin{equation}\label{condition maximal linearization disc lambda maximal}
s  < (p^{n/e} - 3)\frac{p}{p-1} - \nu\left ( \frac{\alpha - \lambda ^{p^{n/e}-1}}{1-\lambda^{p^{n/e}-1}} 
\right ).
\end{equation}Then, the linearization disc $\Delta_f(0,K)$ is maximal. In particular,  if
either the maximum $\max_{i\geq 2 }|a_i|^{1/(i-1)}$ is attained (as for polynomials) or $f$
diverges on $S_{1/a}(0,K)$, then $\Delta_f(0,K)=D_{1/a}(0,K)$.
\end{corollary}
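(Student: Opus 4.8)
The plan is to deduce this statement directly from Theorem~\ref{theorem maximal linearization disc in K }, of which it is simply the specialization to a maximal multiplier. The only work is to read off the two quantities $m$ and $\epsilon$ occurring in that theorem from the hypothesis that $\lambda$ is maximal, and then to check that condition~\eqref{general condition maximal linearization disc} collapses to~\eqref{condition maximal linearization disc lambda maximal}.

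First I would unwind the definition of maximality. Since $\lambda$ is maximal it is in particular primitive, so $m = p^{n/e}-1$, where $n/e=[k:\mathbb{F}_p]$ is the residue degree; and the additional requirement $\lambda^{m}\not\equiv 1 \bmod \pi^2$ forces $|1-\lambda^m| = |\pi| = p^{-1/e}$. In particular the integer $s$ singled out in the corollary by $R(s)<|1-\lambda^{p^{n/e}-1}|\le R(s+1)$ is exactly the integer $s$ that Theorem~\ref{theorem maximal linearization disc in K } attaches to this $\lambda$.

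Next I would compute $\epsilon$. From $|1-\lambda^m| = p^{-1/e}$ we get $\nu(1-\lambda^m)=1/e$, and since $\epsilon$ is defined by $\nu(1-\lambda^m)=\epsilon/e$ this gives $\epsilon = 1$. Substituting $\epsilon=1$ and $m=p^{n/e}-1$ into~\eqref{general condition maximal linearization disc}, the factor $m/\epsilon-2$ becomes $p^{n/e}-3$ while the $\nu$-term is untouched (the closest root of unity $\alpha$ to $\lambda^{m}$ is unchanged), so the hypothesis of the theorem is precisely~\eqref{condition maximal linearization disc lambda maximal}. An application of Theorem~\ref{theorem maximal linearization disc in K } then yields that $\Delta_f(0,K)$ is maximal, and the final ``in particular'' clause transfers verbatim from the corresponding clause of that theorem (and ultimately from Lemma~\ref{lemma upper bound Siegel and isometry}).

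There is no real obstacle here: all the analytic content is already contained in Theorem~\ref{theorem maximal linearization disc in K }, and the corollary is a bookkeeping exercise. The single point that needs a moment's care is the normalization of the valuation --- one must use $\nu(p)=1$, so that $\nu(\pi)=1/e$ and the maximality relation $|1-\lambda^m|=|\pi|$ translates into $\epsilon=1$ rather than into some other integer multiple of $1/e$.
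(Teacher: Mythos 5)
Your proposal is correct and follows exactly the paper's own argument: maximality gives $m=p^{n/e}-1$ and $|1-\lambda^{m}|=p^{-1/e}$, hence $\epsilon=1$, and Theorem \ref{theorem maximal linearization disc in K } then yields the claim. No further comment is needed.
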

\begin{proof}
Recall that since $\lambda $ is maximal we have $m=p^{n/e}-1$ and moreover $|1-\lambda^{p^{n/e}-1}|=p^{-1/e}$.
The corollary then  follows from theorem \ref{theorem maximal linearization disc in K } with  $\epsilon =1$ and $m=p^{n/e}-1$.    
\end{proof}

\begin{remark}

For example, if $K$ is an unramified extension, then $e=1$ and $s=0$. Furthermore, if $\lambda$ is
maximal and $p$ is odd, then $\alpha=1$. Hence, if $p>3$, the condition (\ref{condition maximal linearization disc lambda maximal})   holds and the linearization disc is maximal. On the other hand, if $p=3$, then  condition (\ref{condition maximal linearization disc lambda maximal}) is not satisfied. However, by Corollary \ref{corollary case 1} we know that the linearization disc is maximal also for $p=3$.
This shows that the condition (\ref{condition maximal linearization disc lambda maximal}) is not necessary in this case.
\end{remark}

\section{The quadratic case}\label{section quadratic case}

To see the results at work we provide
examples, where we can find the exact size of the linearization disc. We
begin with quadratic polynomials, and then show how we can extend
this result to power series containing a `sufficiently large'
quadratic term. We also give sufficient conditions, on the
multiplier $\lambda$, that there is a fixed point on the
`boundary' of a linearization disc for quadratic polynomials.


Given a quadratic polynomial $f$ of the form $f(x)=\lambda
x+a_2x^2\in\mathbb{C}_p[x]$, the radius of the corresponding
linearization disc can be estimated by $\sigma$, defined by
$(\ref{definition sigma})$. If $\lambda $ is located inside the
annulus $\{z: p^{-1}<|1-z|<1\}$, we can actually find the exact
size of the linearization disc.

\begin{theorem}\label{theorem quadratic polynomials}
Let $p$ be an odd prime and let $\lambda\in\mathbb{C}_p$, not a
root of unity, belong to the annulus $\{z:p^{-1}<|1-z|<1\}$. Let
$f$ be a quadratic polynomial of the form $f(x)=\lambda
x+a_2x^2\in\mathbb{C}_p[x]$. Then, the coefficients of the
conjugacy $g$ satisfy
\begin{equation}\label{bk quadratic simplified}
|b_{k}|=\frac{|a_2|^{k-1} |1-\lambda|^{ \lfloor \frac{k-1}{p}
\rfloor }} {\prod_{n=1}^{k-1}|1-\lambda^n|}.
\end{equation}
Moreover, the linearization disc about the origin,
$\Delta_f=D_{\tau}(0)$, where the radius $\tau=|1-\lambda|^{-1/p}\sigma$.
\end{theorem}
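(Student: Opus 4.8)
The plan is to specialise the Schr\"oder recurrence to the quadratic shape of $f$, prove the coefficient identity (\ref{bk quadratic simplified}) by strong induction, and then read off $\Delta_f$ from the exact size of the $b_k$. Writing $g(x)=\sum_{k\ge 1}b_kx^k$ and substituting $f(x)=\lambda x+a_2x^2$ into $g\circ f=\lambda g$, only the monomials $a_1=\lambda$ and $a_2$ survive, so the general recurrence (\ref{bk-equation}) collapses to the clean form
\[
\lambda(1-\lambda^{N-1})\,b_N=\sum_{k=\lceil N/2\rceil}^{N-1}\binom{k}{N-k}\lambda^{2k-N}a_2^{\,N-k}\,b_k .
\]
I would record this first. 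Since the binomial coefficients are integers (hence of absolute value $\le 1$) and the powers of $\lambda$ are units, all the arithmetic sits in the $b_k$, in $a_2$, and in the denominators $1-\lambda^{N-1}$, whose absolute values are supplied by Lemma \ref{lemma distance 1-lambda m char 0,p} with $m=1$ (recall $|1-\lambda|<1$ forces $m=1$ and $\lambda\equiv 1\bmod\mathcal{M}$).

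For the induction, I would compare each summand with the target value. Assuming (\ref{bk quadratic simplified}) below $N$, the summand of index $k$ contributes, after telescoping $|1-\lambda^{N-1}|\prod_{n=1}^{k-1}|1-\lambda^n|=\prod_{n=1}^{N-1}|1-\lambda^n|\big/\prod_{n=k}^{N-2}|1-\lambda^n|$, a multiple of the target of absolute value
\[
|1-\lambda|^{\lfloor(k-1)/p\rfloor-\lfloor(N-1)/p\rfloor}\Big|\binom{k}{N-k}\Big|\prod_{n=k}^{N-2}|1-\lambda^n| .
\]
The key point is that a single distinguished index realises this ratio as $1$: when $p\nmid N-1$ it is $k=N-1$ (then $|N-1|=1$ and $\lfloor(N-2)/p\rfloor=\lfloor(N-1)/p\rfloor$), while when $p\mid N-1$ the term $k=N-1$ is \emph{suppressed} by the factor $|N-1|\le p^{-1}<|1-\lambda|$ and the role passes to $k=N-2$, where $\binom{N-2}{2}\equiv 1\bmod p$ (this is where $p$ odd enters) and $|1-\lambda^{N-2}|=|1-\lambda|$ furnish exactly the extra power $|1-\lambda|$ recording the jump $\lfloor(N-1)/p\rfloor=\lfloor(N-2)/p\rfloor+1$. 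In both regimes the lower bound $|1-\lambda|>p^{-1}$, i.e.\ $\nu(1-\lambda)<1$, is precisely what makes the suppression strict.

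The step I expect to be the main obstacle is showing that this distinguished index is the \emph{unique} minimiser of the valuation over the whole range $\lceil N/2\rceil\le k\le N-1$, so that ultrametricity promotes the estimate to an equality with no cancellation. This amounts to the strict inequality
\[
\big(\lfloor(k-1)/p\rfloor-\lfloor(N-1)/p\rfloor\big)\nu(1-\lambda)+\nu\binom{k}{N-k}+\sum_{n=k}^{N-2}\nu(1-\lambda^n)>0
\]
for every non-distinguished $k$, the left side combining floor functions, Kummer-type valuations of binomials, and the values of $\nu(1-\lambda^n)$ from Lemma \ref{lemma distance 1-lambda m char 0,p}; the inequality is driven by $\nu(1-\lambda)<1$ together with Lemma \ref{lemma order of p in factorial} for the leftover factorials. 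Verifying it uniformly across the three sub-cases $R(s)\le|1-\lambda|<R(s+1)$ is the delicate bookkeeping, but since there is always a single argmax, no sum of equal-valuation terms can collapse, and (\ref{bk quadratic simplified}) follows.

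Once (\ref{bk quadratic simplified}) is in hand, the disc is located as in the proofs of Theorems \ref{theorem case 1}--\ref{theorem case 3}. The limit (\ref{limit order of p in factorial}) yields $\lim_k|b_k|^{1/k}=1/\tau$, so $\tau$ is the radius of convergence of $g$; inserting the relevant value of $\prod_{n=1}^{k-1}|1-\lambda^n|$ identifies $\tau$ with $|1-\lambda|^{-1/p}\sigma(\lambda,a)$. Because (\ref{bk quadratic simplified}) gives $|b_k|r^{k-1}\le 1$ for every $r<\tau$, Proposition \ref{proposition one-to-one} shows $g$ is one-to-one on $D_\tau(0)$. The exact formula also forces the disc to be \emph{open}: evaluating $|b_k|\tau^k$ along the subsequence $k-1=p^{j}$, on which the estimate is sharp, shows these terms stay bounded away from $0$, so $g$ diverges on the sphere $S_\tau(0)$ and cannot be continued past $D_\tau(0)$. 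Finally $\tau<1/a$, so Lemma \ref{lemma  f one-to-one} guarantees $f$ itself is injective on all of $D_\tau(0)$; no further obstruction arises, and therefore $\Delta_f=D_\tau(0)$, as claimed.
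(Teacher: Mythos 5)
Your proposal is correct and follows essentially the same route as the paper's own proof: both specialise the Schr\"oder recurrence to the quadratic case and use ultrametricity to isolate a unique dominant term --- the $b_{N-1}$-term generically, passing to the $b_{N-2}$-term when $p\mid N-1$, with $p$ odd making $\binom{N-2}{2}$ a unit and $|1-\lambda|>p^{-1}$ making the suppression strict --- and both then identify $\tau$ from the resulting exact formula, proving injectivity on $D_\tau(0)$ via Proposition \ref{proposition one-to-one} and divergence on $S_\tau(0)$ along the subsequence $k-1=p^{I}$. The ``delicate bookkeeping'' you defer (that the distinguished index is the unique valuation minimiser over the whole range of $k$) is treated in the paper at essentially the same level of detail, and is easily completed by observing that the inductive upper bound $|b_l|\,|a_2|^{N-l}$ is strictly increasing in $l$, so only the top one or two indices can compete.
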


\begin{proof}
We first prove that the coefficients of the conjugacy, defined by
the recursive formula (\ref{bk-equation}), satisfy the identity
(\ref{bk quadratic simplified}).

Recall that one consequence of ultrametricity is that for any
$x,y\in \mathbb{K}$ with $|x|\neq |y|$, the inequality (\ref{sti})
becomes an equality. In other words, if $x,y\in \mathbb{K}$ with
$|x|<|y|$, then $|x+y|=|y|$. The idea of the proof is to find a
dominating term in the right hand side of (\ref{bk-equation})
which is strictly greater than all the others. Then, the absolute
value of the coefficient $b_k$, of the conjugacy $g$, is equal to
the absolute value of the dominating term. The proof is similar to
that performed in \cite[p 760--761]{Lindahl:2004}, for fields of
prime characteristic.

Since the term $l!/(\alpha_1!\alpha_2!)$ is always an integer
\[
|\frac{l!}{\alpha_1!\alpha_2!}|\leq1,
\]
with equality if and only if $l!/(\alpha_1!\alpha_2!)$ is not
divisible by $p$. We will show that most of the time the
$b_{k-1}$-term is the greatest. In fact,
\begin{equation}\label{equation factorial k-1}
\frac{(k-1)!}{\alpha_1!\alpha_2!}=k-1.
\end{equation}
As $k$ runs from $1, \dots, p$, the number $k-1$ will never be
divisible by $p$. Recall that we assume in this proof that
$|1-\lambda |<1$ (so that $m=1$). Hence, $|1-\lambda ^n|<1$, for
all integers $n\geq 1$. Therefore, the $b_{k-1}$-term will be
strictly greater than all the other terms in the right hand side
of (\ref{bk-equation}), and thus by the ultrametric triangle
inequality (\ref{sti}), we have
\begin{equation}\label{bk1}
|b_k|=\frac{|b_{k-1}||k-1||a_2|}{|1-\lambda^{k-1}|}=\frac{|a_2|^{k-1}}{|1-\lambda^{k-1}||1-\lambda^{k-2}|\dots
|1- \lambda|}.
\end{equation}
But if $k=p+1$, then $|k-1|= p^{-1}$ so that for $l=k-1$ in
(\ref{bk-equation}), we obtain
\begin{equation}\label{bk-1 term k-1 divisible by p}
|b_{k-1}|\left|\frac{(k-1)!}{\alpha_1!\alpha_2!}\right||a_2|=\frac{p^{-1}|a_2|^{p}}{|1-\lambda^{p-1}||1-\lambda^{p-2}|\dots
|1-\lambda|}
\end{equation}
Then, the $b_{k-2}$-term will dominate. In fact,
\begin{equation}\label{faculty k-2}
\left |\frac{(k-2)!}{\alpha_1!\alpha_2!}\right
|=\left|\frac{(k-2)(k-3)}{2}\right|=1, \text{ if } p|k-1, \text{ }
p>2.
\end{equation}
As a consequence, $l=k-2$ gives
\begin{equation}\label{bk-2 term k-1 divisible by p}
|b_{p-1}|\left |\frac{(k-2)!}{\alpha_1!\alpha_2!}\right
||a_2|^2=\frac{|a_2|^{p}}{|\lambda^{p-2}-1||\lambda^{p-3}-1|\dots
|\lambda-1|}.
\end{equation}
Note that, since $m=1$ in our case, we have $|1-\lambda
^{p-1}|=|1-\lambda |$. Moreover, by assumption, $|1-\lambda
|>p^{-1}$. Hence, the $b_{k-2}$-term (\ref{bk-2 term k-1 divisible
by p}) is strictly greater than the $b_{k-1}$-term (\ref{bk-1 term
k-1 divisible by p}), and all $b_l$-terms for which $l<k-2$.
Consequently,
\begin{equation}\label{bk2}
|b_{p+1}|=\frac{|b_{p-1}||a_2|^2}{|\lambda^{p}-1|}=\frac{|a_2|^{p}}{|\lambda^{p}-1||\lambda^{p-2}-1||\lambda^{p-3}-1|\dots
|\lambda-1|}.
\end{equation}
Note the lack of the factor $|\lambda^{p-1}-1|$. Now, since
according to Lemma \ref{lemma distance 1-lambda m char 0,p}
\[
|1-\lambda^p|<|1-\lambda|=|1-\lambda^{p-1}|,
\]
we have that
\[
|1-\lambda^{p}|\prod_{j=1}^{p-2}|1-\lambda^{j}|<|1-\lambda^{p-1}|\prod_{j=1}^{p-2}|1-\lambda^{j}|.
\]
Therefore we have for $k=p+2$ that the $b_{k-1}$-term is again
strictly greater than all the others in the right hand side of
(\ref{bk-equation}) so that
\begin{equation*}
|b_{p+2}|=\frac{|b_{p+1}||a_2|}{|\lambda^{p+1}-1|}=\frac{|a_2|^{p+1}}{|1-\lambda^{p+1}||1-\lambda^p||1-\lambda^{p-2}||1-\lambda^{p-3}|\dots
|1-\lambda |}.
\end{equation*}
The $b_{k-1}$-term will dominate until $p$ divides $k-1$ again,
i.e.\@ for $k=2p+1$ (which means that we ``loose'' the factor
$|\lambda^{2p-1}-1|$). Repeated application of these arguments
yields that
\begin{equation}\label{bk quadratic}
|b_{k}|=\frac{|a_2|^{k-1}\prod_{i\cdot p\leq
k-1}|1-\lambda^{ip-1}|} {\prod_{n=1}^{k-1}|1-\lambda^n|}.
\end{equation}
Note that $|1-\lambda^{ip-1}|=|1-\lambda |$, since $m=1$ in this
case. Hence, we obtain (\ref{bk quadratic simplified}) as
required.

It remains to prove that that the corresponding linearization disc is the
open disc $D_{\tau}(0)$. Recall that the estimates for the $b_k$:s
in the previous sections where based on the estimate (\ref{b_k
estimate by prod 1-lambda n}). Moreover,
\[
\left (|1-\lambda|^{ \lfloor \frac{k-1}{p} \rfloor }\right
)^{1/k}\to |1-\lambda |^{\frac{1}{p}}, \quad k\to \infty.
\]
This suggests that $g$ converges on an open disc of radius
\[
\tau =|1-\lambda |^{-\frac{1}{p}}\sigma.
\]
In fact, $g$ diverges on the sphere of radius $\tau$; let $I\geq
s+1$ be an integer, then by Lemma \ref{lemma case 1}, \ref{lemma
case 2}, \ref{lemma case 3} and (\ref{bk quadratic simplified}) we
have
\begin{equation}\label{bk tau k quadratic}
|b_{p^I+1}|\tau ^{p^I+1}=p^{-\frac{1}{p-1}}
|1-\lambda|^{-\frac{1}{p}}\sigma=p^{-\frac{1}{p-1}}\tau ,
\end{equation}
which does not approach zero as $I$ goes to infinity. Furthermore,
in a similar way, applying Lemma \ref{lemma case 1}, \ref{lemma
case 2}, and \ref{lemma case 3} to the identity (\ref{bk quadratic
simplified}) we obtain
\[
|b_{k}|\tau ^{k}\leq p^{-\frac{1}{p-1}}\tau.
\]
Consequently, $g$ is one-to-one on $D_{\tau}(0)$. It follows that
the linearization disc of the quadratic polynomial $f$ is the disc
$\Delta_f=D_{\tau}(0)$.

\end{proof}

The theorem implies, in particular, that $f$ can have no periodic
points (except the fixed point at the origin) in the disc
$D_{\tau}(0)$. However, there may be periodic points on the
boundary. We will give sufficient conditions that there is a fixed
point on the boundary $S_{\tau}(0)$. Note that $f$ has a fixed
point $\hat{x}=(1-\lambda)/a_2$. Solving the equation
\begin{equation}\label{equation fixed point}
\tau=|1-\lambda |/|a_2|,
\end{equation}
yields that $\hat{x}$  is located on $S_{\tau}(0)$ if $s=1$ and
$\alpha =1$ and $|1-\lambda |=p^{-1/2(p-1)}$, or if $s\geq 2$ and
$|1-\lambda |=p^{-t(s)}$, where
$t(s)=[(s-1)p^{s-1}(p-1)+p^{s-1}-1]/p^{s-1}(p-1)$. Furthermore,
$\hat{x}$ cannot be located on $S_{\tau}(0)$ if $s=0$; the only
solution to (\ref{equation fixed point}) for $s=0$ is $|1-\lambda
|=p^{-p/(p-1)}$, and hence, $\lambda $ does not belong to the
annulus $\{z:p^{-1}<|1-z|<1\}$.

As in \cite[Corollary 2.1]{Lindahl:2004} the previous result on
quadratic polynomials works also for power series containing a
dominating quadratic term in the following sense.

\begin{theorem}\label{theorem quadratic power series}
Let $p$ be an odd prime and let $\lambda\in\mathbb{C}_p$, not a
root of unity, belong to the annulus $\{z:p^{-1}<|1-z|<1\}$. Let
$f\in \mathbb{C}_p[[x]]$ be a power series of the form
\[
f(x)=\lambda x + a_2x^2 +\sum_{i\geq 3}a_ix^i,
\]
where
\begin{equation}\label{condition quadratic power series}
|1-\lambda |^{1/p}|a_2|>1, \quad |1-\lambda |^{1/p}|a_2|>|a_i|,
\quad i\geq 3.
\end{equation}
Then, the coefficients of the conjugacy $g$ satisfy (\ref{bk
quadratic simplified}). Moreover, the linearization disc about the
origin, $\Delta_f=D_{\tau}(0)$, where
$\tau=|1-\lambda|^{-1/p}\sigma$.
\end{theorem}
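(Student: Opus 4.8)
The plan is to reduce the entire statement to the single coefficient identity (\ref{bk quadratic simplified}) and then to quote the second half of the proof of Theorem \ref{theorem quadratic polynomials} verbatim. I would first record a normalization forced by the hypotheses (\ref{condition quadratic power series}): they give $|a_2|>|1-\lambda|^{-1/p}>1$ and, for every $i\geq 3$, $|a_i|^{1/(i-1)}<|a_2|$, so that $a=\sup_{i\geq 2}|a_i|^{1/(i-1)}=|a_2|$ and $f\in\mathcal{F}_{\lambda,a}$ with $a=|a_2|$. Hence $g$ exists and converges on a disc by the non-Archimedean Siegel theorem, and $\sigma$ (from (\ref{definition sigma})) and $\tau=|1-\lambda|^{-1/p}\sigma$ are computed with exactly the same $a$ as in the quadratic polynomial case. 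Consequently, once (\ref{bk quadratic simplified}) is established, the convergence-and-injectivity argument closing the proof of Theorem \ref{theorem quadratic polynomials} — which uses only (\ref{bk quadratic simplified}) together with Lemmas \ref{lemma case 1}, \ref{lemma case 2}, \ref{lemma case 3} — applies unchanged and yields $\Delta_f=D_\tau(0)$.

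The heart of the matter is therefore (\ref{bk quadratic simplified}), which I would prove by strong induction on $k$ via the recursion (\ref{bk-equation}). Assuming (\ref{bk quadratic simplified}) for all indices $<k$, I would split the sum in (\ref{bk-equation}) into \emph{quadratic} monomials, in which only $a_1=\lambda$ and $a_2$ occur ($\alpha_i=0$ for $i\geq 3$), and \emph{genuinely higher} monomials, in which $\alpha_j>0$ for some $j\geq 3$. On the quadratic monomials the computation is word for word that of Theorem \ref{theorem quadratic polynomials}: there is a unique dominant term (the $b_{k-1}$-term when $p\nmid k-1$, the $b_{k-2}$-term when $p\mid k-1$) whose absolute value equals the right-hand side of (\ref{bk quadratic simplified}). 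It then remains only to show that every genuinely higher monomial has absolute value \emph{strictly} smaller than that right-hand side; ultrametricity then gives $|b_k|$ equal to the dominant quadratic term, closing the induction.

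For the key estimate I would fix a higher monomial attached to index $l$ with $\sum_i\alpha_i=l$ and $\sum_i i\alpha_i=k$, so that $\sum_{i\geq1}(i-1)\alpha_i=k-l$. Dividing its absolute value by the target value (\ref{bk quadratic simplified}), inserting the inductive formula for $|b_l|$, and bounding the multinomial coefficient by $1$, the ratio is bounded above by $A\cdot B\cdot C$, where
\[
A=\prod_{i\geq3}\left(\frac{|a_i|}{|a_2|^{i-1}}\right)^{\alpha_i},\qquad B=|1-\lambda|^{\lfloor(l-1)/p\rfloor-\lfloor(k-1)/p\rfloor},\qquad C=\prod_{n=l}^{k-2}|1-\lambda^n|.
\]
Here $BC$ is exactly the ratio of the factorial-free index-$l$ quadratic contribution to $|b_k|$, hence is already governed by the quadratic analysis, while $A$ carries all dependence on the higher coefficients. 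Writing $\delta=\nu(1-\lambda)\in(0,1)$, $\beta=\sum_{i\geq3}\alpha_i\geq1$ and $\gamma=\sum_{i\geq3}(i-2)\alpha_i\geq1$, the two inequalities in (\ref{condition quadratic power series}) read $\nu(a_i)-\nu(a_2)>\delta/p$ and $-\nu(a_2)>\delta/p$, whence $\nu(A)>\tfrac{\delta}{p}(\beta+\gamma)>0$. The calibrating exponent $1/p$ in (\ref{condition quadratic power series}) is present precisely so that this strictly positive slack in $\nu(A)$ can absorb the residual contribution of $BC$.

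I expect the main obstacle to be the final inequality $\nu(A)+\nu(B)+\nu(C)>0$ \emph{uniformly} in $l$ and in the higher exponents. The delicate point is that $B$ has non-positive valuation (so $|B|\geq1$), and is only partially compensated by the non-negative valuation of $C$; their balance is controlled by the number of multiples of $p$ in $\{l,\dots,k-1\}$, which is the very same ``loss of factors $|1-\lambda^{ip-1}|$'' bookkeeping appearing in Theorem \ref{theorem quadratic polynomials} (and governed, through $m=1$, by Lemma \ref{lemma distance 1-lambda m char 0,p}). I would treat this exactly as in \cite[Corollary 2.1]{Lindahl:2004}: run $\nu(B)+\nu(C)$ through the same divisibility analysis used for the quadratic terms to bound it below by a quantity that the slack $\nu(A)>\tfrac{\delta}{p}(\beta+\gamma)$ strictly dominates, the extremal case being a single extra factor $a_3$, where $\beta=\gamma=1$.
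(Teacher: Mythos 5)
Your proposal is correct and follows essentially the same route as the paper: the paper's own proof of this theorem is just the two-sentence observation that condition (\ref{condition quadratic power series}) forces the very same dominant terms in (\ref{bk-equation}) as in the proof of Theorem \ref{theorem quadratic polynomials} (modelled on \cite[Corollary 2.1]{Lindahl:2004}), and your normalization $a=|a_2|$, the split into quadratic versus genuinely higher monomials, and the $A\cdot B\cdot C$ ratio estimate are exactly the details that assertion suppresses. The final inequality you flag as the main obstacle does go through with the crude bounds $\nu(1-\lambda^n)\geq\nu(1-\lambda)$ for each factor of $C$ and $M\leq\lceil (k-l)/p\rceil$ for the multiples of $p$ lost in $B$, which already give $\nu(B)+\nu(C)\geq 0$ for all $k-l\geq 2$ and odd $p$, so the strict positivity of $\nu(A)$ closes the induction.
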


\begin{proof}
By the condition (\ref{condition quadratic power series}), the
same terms as in the proof of Theorem \ref{theorem quadratic
polynomials} will be stricly larger than all the others in
(\ref{bk-equation}). The reason for the factor
$|1-\lambda|^{1/p}$, is the lack of the factor $|1-\lambda
^{p-1}|$ in (\ref{bk2}).
\end{proof}

\begin{corollary}\label{corollary quadratic power series}
Let  $f$ be a power series satisfying the condions of Theorem \ref{theorem quadratic power series}, 
then the radius of the linearization disc 
\[
\tau <p^{-1/(p-1)p^s}a^{-1}.
\]
In particular, the linearization disc cannot contain the  disc $D_{1/a}(0)\cap K$ for any algebraic 
extension $K$ of $\mathbb{Q}_p$. 
\end{corollary}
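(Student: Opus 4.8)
The plan is to compute $\tau=|1-\lambda|^{-1/p}\sigma$ explicitly from the definition (\ref{definition sigma}) and compare it directly against the target $p^{-1/((p-1)p^s)}a^{-1}$. Since $\lambda$ lies in the annulus $\{z:p^{-1}<|1-z|<1\}$ we have $m=1$, so if $s$ denotes the integer with $R(s)\le|1-\lambda|<R(s+1)$ and $\alpha\in\Gamma_r$ the closest root of unity to $\lambda$, the general estimate collapses to
\begin{equation*}
\sigma = a^{-1}R(s+1)\,|1-\lambda|^{1+\frac{p-1}{p}s}\left(\frac{|\alpha-\lambda|}{|1-\lambda|}\right)^{1/p^s}.
\end{equation*}
First I would multiply by $|1-\lambda|^{-1/p}$ and collect the powers of $|1-\lambda|$; the exponent becomes $1-\tfrac1p+\tfrac{p-1}{p}s=\tfrac{(p-1)(s+1)}{p}$, giving
\begin{equation*}
a\tau = R(s+1)\,|1-\lambda|^{\frac{(p-1)(s+1)}{p}}\left(\frac{|\alpha-\lambda|}{|1-\lambda|}\right)^{1/p^s}.
\end{equation*}

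The key observation is that $R(s+1)=p^{-1/((p-1)p^s)}$ is precisely the target factor, so the asserted bound $\tau<p^{-1/((p-1)p^s)}a^{-1}$ reduces to showing that the remaining product is strictly less than $1$. This is immediate: since $|1-\lambda|<1$ and the exponent $(p-1)(s+1)/p$ is positive, the first factor is $<1$; and by Remark \ref{remark lemma distance 1-lambda m char 0,p} we always have $|\alpha-\lambda|\le|1-\lambda|$, so the second factor is $\le 1$. Hence $a\tau<R(s+1)<1$, which yields both the displayed inequality and, \emph{a fortiori}, the strict bound $\tau<1/a$. Note this step needs nothing beyond Theorem \ref{theorem quadratic power series} (which identifies $\Delta_f=D_\tau(0)$) together with the arithmetic of $R$.

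For the final (\emph{in particular}) assertion I would argue that $\Delta_f=D_\tau(0)$ with $\tau<1/a$ strictly, so $D_\tau(0)\subsetneq D_{1/a}(0)$ as discs in $\mathbb{C}_p$, and then exhibit a separating point: any $x$ with $\tau\le|x|<1/a$ lies in $D_{1/a}(0)\cap K$ but not in $\Delta_f$, forcing $D_{1/a}(0)\cap K\not\subseteq\Delta_f$. The step I expect to be the main obstacle is guaranteeing that such an $x$ can be found in $K$, since for a coarse value group $|K^*|$ the interval $[\tau,1/a)$ need not meet it. The honest core of the statement is the strict containment $\Delta_f\subsetneq D_{1/a}(0)$ in $\mathbb{C}_p$; I would make the role of the value group explicit, noting that because the gap is at least the fixed factor $p^{-1/((p-1)p^s)}$, the interval $[\tau,1/a)$ has positive length on the logarithmic scale and therefore contains rational powers of $p$, so it always meets the dense value group $\{p^r:r\in\mathbb{Q}\}$ of $\overline{\mathbb{Q}_p}$ (equivalently of $\mathbb{C}_p$), producing the required separating point.
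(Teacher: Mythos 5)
The paper states this corollary without proof, so there is nothing to compare against route-wise; judged on its own, your computation of the first inequality is correct and is surely what the author intended: with $m=1$ the exponent of $|1-\lambda|$ in $a\tau$ collapses to $(p-1)(s+1)/p>0$, the factor $R(s+1)=p^{-1/((p-1)p^s)}$ is exactly the target, and $|\alpha-\lambda|\le|1-\lambda|$ kills the last factor, so $a\tau<R(s+1)<1$.

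The gap is in the \emph{in particular} clause, and it is precisely the one you flag and then do not close. Your resolution covers only fields whose value group is dense ($\overline{\mathbb{Q}}_p$, $\mathbb{C}_p$), whereas the statement quantifies over all algebraic extensions, including finite ones with discrete value group $p^{\frac{1}{e}\mathbb{Z}}$; for those, ``the interval has positive logarithmic length'' does not by itself produce a separating point. Worse, if $K$ is not required to contain $\lambda$ the claim is actually false: e.g.\@ for $p=3$, $|1-\lambda|=3^{-1/3}$ (so $s=1$, $\alpha=1$) and $a=|a_2|=3^{2/9}$ one gets $\tau=3^{-5/6}$, and the interval $[-5/6,-2/9)$ contains no integer, so $D_{1/a}(0)\cap\mathbb{Q}_3=\overline{D}_{1/3}(0)\cap\mathbb{Q}_3\subset D_\tau(0)=\Delta_f$. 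The statement must therefore be read with $K\ni\lambda$, and then it can be rescued by an argument you are missing: writing $|1-\lambda|=p^{-j/e}$ with $j\ge 1$, the condition $R(s)\le|1-\lambda|$ forces $e\ge jp^{s-1}(p-1)$ for $s\ge 1$ (and $p^{-1}<|1-\lambda|<1$ forces $e\ge j+1$ when $s=0$), whence
\[
\log_p(1/a)-\log_p\tau \;\ge\; \frac{1}{(p-1)p^s}+\frac{(p-1)(s+1)}{p}\cdot\frac{j}{e} \;>\;\frac{1}{e}
\]
in every case. A half-open interval of length exceeding $1/e$ must contain a point of $\tfrac{1}{e}\mathbb{Z}$, which yields $x\in K$ with $\tau\le|x|<1/a$ and completes the proof. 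Without this use of the constraint tying $e$ to $s$ via $R(s)\le|1-\lambda|$, the separating point is not guaranteed for the discretely valued $K$ that the statement (and its intended application after Theorem \ref{theorem maximal linearization disc in K }) is really about.
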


\section{Minimality and ergodicity}

\subsection{Minimality and conjugation in non-Archimedean fields}

In this section we consider power series defined over an arbitrary
complete non-Archimedean field $K$, rather than just over
$\mathbb{C}_p$. The notion of transitivity and minimality on
subsets of $K$ are defined as follows. Let $X$ be a subset of $K$
and let $f\in K[[x]]$ be a power series which converges on $X$.
Suppose that $X$ is invariant under $f$, i.e.\@ $f(X)\subseteq X$.
The map $f:X\to X$ is said to be \emph{transitive} if there is an
element $x\in X$, such that its forward orbit $\{f^{\circ
n}(x)\}_{n=0}^{\infty}$ is dense in $X$. We say that $f:X\to X$ is
\emph{minimal} on $X$ if for every $x\in X$, its forward orbit
$\{f^{\circ n}(x)\}_{n=0}^{\infty}$ is dense in $X$.

We will look for dense orbits near indifferent non-resonant fixed points of $f$.
By Lemma \ref{lemma linearization disc
isometry}, the dynamics on  a linearization disc $\Delta _f(x_0,K)$ is located on 
invariant spheres, about the fixed point $x_0$.
As in previous sections we will assume (without loss of
generality) that $f$ has an indifferent fixed point at the origin.
Given $\lambda\in K$, let $T_{\lambda}: K \to K$ be the multiplication map, $x
\mapsto \lambda x$. We will prove that transitivity of
$f(x)= \lambda x +O(x^2)\in K[[x]]$, on some subset $X$,
of the corresponding linearization disc $\Delta_f$ about the origin, is
equivalent to transitivity of $T_{\lambda}$ on $g(X)$. Moreover,
transitivity and minimality are equivalent if $X$ is compact. 


\begin{theorem}[Transitivity is preserved under analytic conjugation]\label{theorem-minimalitypreserved}
Let $f(x)=\lambda x +O(x^2)\in K[[x]]$ be analytically
conjugate to $T_{\lambda}$, on a linearization disc $\Delta_f$ about the
origin, via a conjugacy function $g$, such that $g(0)=0$ and $g'(0)=1$.
Suppose that the subset $X\subseteq \Delta_f$ is invariant under
$f$. Then, the following statements hold:
\begin{enumerate}[1)]

 \item $f$ is transitive on $X$ if and only if $T_{\lambda}$ is transitive on $g(X)$.
 \item If $X$ is compact and $f$ is transitive on
$X$, then $f$ is minimal on $X$. Moreover, $f(X)=X$ and
$g(X)=T_{\lambda}(g(X))$.
\end{enumerate}
\end{theorem}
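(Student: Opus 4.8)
The plan is to transport the whole problem through the conjugacy $g$ and then argue entirely with the isometry $T_{\lambda}$ on the compact set $Y:=g(X)$. By Lemma~\ref{lemma linearization disc isometry} (which, by the remark following it, applies over an arbitrary complete $K$ with ``bijective'' weakened to one-to-one onto the image), $g$ is an isometry of $\Delta_f$ onto $g(\Delta_f)$, hence a homeomorphism onto its image, and the Schr\"oder relation $g\circ f=T_\lambda\circ g$ iterates to $g\circ f^{\circ n}=T_\lambda^{\circ n}\circ g$. Since $X$ is $f$-invariant, applying $g$ gives $T_\lambda(Y)\subseteq Y$, so $T_\lambda$ acts on $Y$. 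Because a homeomorphism carries orbits to orbits and dense subsets to dense subsets (using $g$ in one direction and $g^{-1}$ in the other), $\{f^{\circ n}(x)\}_{n\ge0}$ is dense in $X$ if and only if $\{T_\lambda^{\circ n}(g(x))\}_{n\ge0}$ is dense in $Y$; this proves statement~1) and reduces the minimality claim in~2) to minimality of $T_\lambda$ on $Y$. Moreover, since $g$ is injective and $g(f(X))=T_\lambda(g(X))$, the identity $f(X)=X$ is equivalent to $T_\lambda(Y)=Y$, i.e.\ to $g(X)=T_\lambda(g(X))$. Thus all of~2) follows once I show that a transitive isometry of a compact set is minimal and surjective.

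Next I would establish recurrence. Since $|\lambda|=1$ (equivalently, since $T_\lambda=g\circ f\circ g^{-1}$ with $f,g$ isometries), $T_\lambda$ is an isometry of $K$, so the iterates $T_\lambda^{\circ n}$ are equicontinuous and $Y$ is compact. For any $y\in Y$ the sequence $(T_\lambda^{\circ n}(y))_n$ has a convergent, hence Cauchy, subsequence; choosing $n_j<n_k$ with $|T_\lambda^{\circ n_j}(y)-T_\lambda^{\circ n_k}(y)|<\varepsilon$ and using that $T_\lambda^{\circ n_j}$ is an isometry yields $|y-T_\lambda^{\circ(n_k-n_j)}(y)|<\varepsilon$ with $n_k-n_j\ge1$, so every point of $Y$ is recurrent. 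The key consequence I would extract is that the dense orbit of a transitive point $y_0$ meets every nonempty relatively open $W\subseteq Y$ \emph{infinitely often}: if $T_\lambda^{\circ M}(y_0)\in W$ were the last visit, recurrence of $w=T_\lambda^{\circ M}(y_0)$ would produce $T_\lambda^{\circ n}(w)=T_\lambda^{\circ(M+n)}(y_0)\in W$ for some $n\ge1$, a contradiction.

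With the ``infinitely often'' property, surjectivity and minimality are short. For surjectivity, $T_\lambda(Y)$ is compact, hence closed, and contains $\{T_\lambda^{\circ n}(y_0):n\ge1\}$, which is dense in $Y$ (the full orbit minus at most one point, whose closure still recovers $y_0$ by recurrence); therefore $T_\lambda(Y)=Y$. For minimality, given $y\in Y$, a target $z\in Y$ and $\varepsilon>0$, I first pick $k$ with $|T_\lambda^{\circ k}(y_0)-y|<\varepsilon$ by density, so that $|T_\lambda^{\circ(k+n)}(y_0)-T_\lambda^{\circ n}(y)|=|T_\lambda^{\circ k}(y_0)-y|<\varepsilon$ for all $n$ by the isometry property; since the orbit of $y_0$ visits the $\varepsilon$-ball about $z$ infinitely often, I choose such a visit at a time $N\ge k$ and set $n=N-k\ge0$, whence $|T_\lambda^{\circ n}(y)-z|<\varepsilon$. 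Hence every orbit in $Y$ is dense, so $T_\lambda$ is minimal on $Y$, and transporting back through $g$ gives that $f$ is minimal on $X$, together with $f(X)=X$ and $g(X)=T_\lambda(g(X))$.

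The step I expect to be the crux is the passage from transitivity to minimality, which is exactly where the isometric (equicontinuous) nature of the dynamics is indispensable. Recurrence---and through it the fact that a dense orbit returns to every open set infinitely often---is what rules out the generic phenomenon of a dense forward orbit whose tail escapes a region, and it is also what upgrades the invariance $f(X)\subseteq X$ to the equality $f(X)=X$ on a compact invariant set.
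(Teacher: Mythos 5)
Your proof is correct, and its overall architecture is the same as the paper's: transport everything through the bijective isometry $g$ (Lemma \ref{lemma linearization disc isometry}), deduce part 1) from the fact that an isometric homeomorphism carries dense orbits to dense orbits, and reduce part 2) to showing that a transitive isometry of a compact set is surjective and minimal. The genuine difference lies in how that last reduction is discharged. The paper establishes $g(X)=T_{\lambda}(g(X))$ by a rather terse density-plus-compactness remark and then simply cites the ``well known'' fact that a transitive bijective isometry of a compact metric space is minimal (referring to \cite{Walters:1982}; cf.\ also \cite{Oxtoby:1952}). You instead prove this from first principles: compactness plus the isometry property give, via a pigeonhole on a convergent subsequence, that every point of $g(X)$ is recurrent; recurrence upgrades ``the dense orbit meets every nonempty open set'' to ``meets it infinitely often''; and from that both the surjectivity $T_{\lambda}(g(X))=g(X)$ (the tail of the orbit is still dense, and its closure lies in the closed set $T_{\lambda}(g(X))$) and minimality (shadow an arbitrary point by the transitive orbit using the isometry, then wait for a late visit to the target) follow cleanly. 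Your version is therefore self-contained where the paper's is a citation, and your recurrence argument also makes rigorous the surjectivity step that the paper's own proof passes over quickly; the cost is only a modest amount of extra length.
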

\begin{proof}
Let $f$ be analytically conjugate to $T_{\lambda}$, with conjugacy
function $g$.
Suppose that $X\subseteq \Delta_f$ is invariant under $f$. By the conjugacy relation we must have $g(f(X))=T_{\lambda}(g(X))$. It follows that
$T_{\lambda}(g(X))\subseteq g(X)$. In other words, $g(X)$ is
invariant under $T_{\lambda}$.

Now, suppose that $T_{\lambda}$ is transitive on $g(X)$. Then,
there is an $x\in X$ such that the orbit $\{T_{\lambda}^{\circ
n}(x)\}$ is dense in $g(X)$. Recall that by
Lemma \ref{lemma linearization disc isometry}, $g:X\to g(X)$ is bijective
and isometric. Hence, given $\epsilon
>0$ and $y\in X$, there is an integer $n\geq 1$ such that in
view of the conjugacy relation
  \begin{eqnarray*}
    \epsilon &>& |T_{\lambda}^{\circ{n}}\circ g(x)-g(y)|=|g^{-1}\circ T_{\lambda}^{\circ{n}}\circ g(x)-g^{-1}\circ
    g(y)|=|f^{\circ n}(x)-y|.
  \end{eqnarray*}
It follows that the orbit $\{f^{\circ n}(x)\}$ is dense in $X$.
Accordingly, $f$ is transitive on $X$. Likewise, transitivity of
$f$ implies transitivity of $T_{\lambda}$.

Now we consider the second statement of the theorem. Suppose that the
subset $X\subset \Delta _f$ is compact, and that $f:X\to X$ is
transitive so that $T_{\lambda}$ is transitive on $g(X)$. In view
of the conjugacy relation we have $g(f^{\circ
n}(x))=T_{\lambda}^{\circ n}(g(x))$. Hence, transitivity of
$T_{\lambda}$ implies that $g(X)$ is dense in $T_{\lambda}(g(X))$.
Continuity of $g$ and $T_{\lambda}$, and compactness of $X$, gives
$g(X)=T_{\lambda}(g(X))$. Consequently, $f(X)=g^{-1}\circ
T_{\lambda} \circ g(X)=X$. Hence, $f:X\to X$ is not only
one-to-one and isometric but also surjective.

It is well known that a transitive bijective isometry is minimal,
see e.g.\@ \cite{Walters:1982}. Accordingly, minimality and
transitivity are equivalent on compact subsets of non-Archimedean
linearization discs. This completes the proof.
\end{proof}

\begin{remark}\label{remark f isometry X subset sphere}
By Lemma \ref{lemma linearization disc isometry}, $f:\Delta_f\to
\Delta_f$ is also an isometry. Therefore, if $f$ is minimal on
some subset $X\subseteq \Delta_f$ and $X\neq \{0\}$, then
$X\subseteq S$ for some sphere $S\subset \Delta_f$.
\end{remark}

\subsection{Minimality in $\mathbb{Q}_p$}


It follows from the previous section that transitivity, and hence
minimality, on spheres about an indifferent fixed point can be
characterized in terms of the multiplier map. By Lemma \ref{lemma transitivity multiplier},
the multiplier map $T_{\lambda}$ is transitive on a sphere about the origin in 
$\mathbb{Q}_p$ if and only if $\lambda$ is maximal and $p$ is odd.
Moreover, in view of Theorem \ref{theorem-minimalitypreserved}, minimality and transitivity 
of $T_{\lambda}$ coincide, as proven earlier by various authors
\cite{Bryk/Silva:2003,Coelho/Parry:2001,Gundlach/Khrennikov/Lindahl:2001:a,Oselies/Zieschang:1975} in the $p$-adic setting.

Recall that, if $\lambda$ is maximal, then  the corresponding
linearization disc is maximal as stated in Corollary \ref{corollary p-adic linearization disc}.  

\begin{theorem}\label{theorem-minimalityQp}
Let $f\in \mathcal{F}_{\lambda,a}\cap \mathbb{Q}_p[[x]]$ for some
prime $p$. Then, $f$ is minimal on each sphere
$S\subset\mathbb{Q}_p$ of radius $r<1/a$ about the origin, if and
only if $\lambda $ is maximal and $p$ is odd. Moreover, if $\lambda $ is
maximal and $p$ is odd, then $g(S)=f(S)=S$.
\end{theorem}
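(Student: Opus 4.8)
The plan is to reduce the entire statement to the already-understood dynamics of the multiplier map $T_\lambda$ on spheres about the origin, transporting transitivity back and forth through the conjugacy $g$. Two results do the heavy lifting. Theorem~\ref{theorem-minimalitypreserved} says that for an $f$-invariant $X\subseteq\Delta_f$, the map $f$ is transitive on $X$ precisely when $T_\lambda$ is transitive on $g(X)$, and it upgrades transitivity to minimality (with $f(X)=X$ and $g(X)=T_\lambda(g(X))$) as soon as $X$ is compact. Lemma~\ref{lemma transitivity multiplier} then identifies transitivity of $T_\lambda$ on a sphere about the origin in $\mathbb{Q}_p$ with the arithmetic condition that $\lambda$ be maximal and $p$ be odd. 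The bridge between them is Lemma~\ref{lemma linearization disc isometry}: since the fixed point is at the origin, $g$ is a bijective isometry of $\Delta_f$ onto itself fixing $0$, so it carries each sphere $S_r(0)\subseteq\Delta_f$ onto a sphere of the same radius.

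First I would record the compatibility of $g$ with $\mathbb{Q}_p$. Because $f\in\mathbb{Q}_p[[x]]$ and $\lambda\in\mathbb{Q}_p$, the recursion (\ref{bk-equation}) forces every coefficient $b_k$ of $g$ to lie in $\mathbb{Q}_p$, and the formal inverse $g^{-1}$ (reversion of a $\mathbb{Q}_p$-power series) likewise has $\mathbb{Q}_p$-coefficients; by Lemma~\ref{lemma linearization disc isometry} the two are mutually inverse isometries of $\Delta_f$. Hence for every $r\in|\mathbb{Q}_p^{*}|$ with $r<1/a$ and $S=\{x\in\mathbb{Q}_p:|x|=r\}\subseteq\Delta_f$, the maps $g$ and $g^{-1}$ send $\mathbb{Q}_p$ into $\mathbb{Q}_p$ while preserving absolute value, so $g$ restricts to a bijective isometry $g:S\to S$; in particular $g(S)=S$. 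I expect this $\mathbb{Q}_p$-surjectivity to be the main obstacle: over $\mathbb{C}_p$ the equality $g(S_r(0))=S_r(0)$ is automatic, but over $\mathbb{Q}_p$ one must rule out that $g(S)$ is merely an isometric copy properly inside $S$, since Lemma~\ref{lemma transitivity multiplier} applies only when $g(S)$ is the \emph{full} sphere.

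For the direction $(\Leftarrow)$, assume $\lambda$ is maximal and $p$ is odd. Corollary~\ref{corollary p-adic linearization disc} then makes the linearization disc maximal, so $D_{1/a}(0)\subseteq\Delta_f$ and every sphere $S$ of radius $r<1/a$ lies in $\Delta_f$ and is $f$-invariant. By the previous paragraph $g(S)=S$, and Lemma~\ref{lemma transitivity multiplier} gives that $T_\lambda$ is transitive on $S=g(S)$. Theorem~\ref{theorem-minimalitypreserved}(1) transfers this to transitivity of $f$ on $S$; since $\mathbb{Q}_p$ is locally compact, $S$ is compact, so part~(2) upgrades transitivity to minimality and yields $f(S)=S$. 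Together with $g(S)=S$ this settles the ``moreover'' clause.

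For the direction $(\Rightarrow)$, assume $f$ is minimal on every sphere of radius $r<1/a$. By Theorem~\ref{theorem general estimate} there is $\sigma>0$ with $\Delta_f\supseteq D_{\sigma}(0)$, so I would pick $r\in|\mathbb{Q}_p^{*}|$ with $r<\min\{\sigma,1/a\}$ and set $S=\{x\in\mathbb{Q}_p:|x|=r\}\subseteq\Delta_f$. Minimality gives transitivity of $f$ on $S$, hence by Theorem~\ref{theorem-minimalitypreserved}(1) transitivity of $T_\lambda$ on $g(S)=S$, the full sphere, and Lemma~\ref{lemma transitivity multiplier} then forces $\lambda$ to be maximal and $p$ to be odd. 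The case $p=2$ needs no separate treatment: Lemma~\ref{lemma transitivity multiplier} precludes transitivity of $T_\lambda$ for $p=2$, so both sides of the equivalence fail simultaneously, consistent with the statement.
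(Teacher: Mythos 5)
Your proof is correct and follows essentially the same route as the paper: reduce everything to the multiplier map via Theorem \ref{theorem-minimalitypreserved}, invoke Lemma \ref{lemma transitivity multiplier} for the arithmetic characterization (maximal $\lambda$, odd $p$), and use Corollary \ref{corollary p-adic linearization disc} to place every sphere of radius $r<1/a$ inside the linearization disc. The only divergence is at the step $g(S)=S$, which you rightly flag as the crux over $\mathbb{Q}_p$: you obtain it a priori from the $\mathbb{Q}_p$-rationality of the coefficients of $g$ and $g^{-1}$ together with isometry, whereas the paper derives it dynamically from minimality of $T_\lambda$ on $g(S)$ plus compactness and continuity of $g^{-1}$ --- both arguments are valid.
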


\begin{proof}
Suppose that $f$ is minimal on a rational sphere $S\subset \Delta
_f(0,\mathbb{Q}_p)$ about the origin. In view of Theorem
\ref{theorem-minimalitypreserved}, $T_{\lambda}$ is minimal on
$g(S)$.  Hence, by the conjugacy relation, $f^{\circ
n}(g^{-1}(x))=g^{-1}(\lambda ^n(x))$, the image $g^{-1}(S)$ is
dense in $S$. By continuity of $g^{-1}$ and compactness of $S$,
$g^{-1}(S)=S$. Accordingly, $g(S)=S$. It follows that $T_{\lambda
}$ is minimal on $S$. Consequently, $\lambda $ is maximal.

On the other hand, suppose that $\lambda $ is maximal.
Then, $T_{\lambda}$ is minimal on each sphere $S_r(0)\subset
\mathbb{Q}_p$. In view of Corollary \ref{corollary p-adic linearization disc}, 
the semi-conjugacy relation $g(f^{\circ n}(x))=\lambda ^n
g(x)$, holds for all $x\in \Delta_f(0,\mathbb{Q}_p)\supseteq
D_{1/a}(0)$. By similar arguments as above, we conclude that
$g(S_r(0))=S_r(0)$ if $r<1/a$. It follows that $T_{\lambda}$ is
minimal on $g(S_r(0))$ for $r<1/a$. Consequently, $f\in
\mathcal{F}_{\lambda,a}\cap \mathbb{Q}_p[[x]]$ is minimal on
$S_r(0)$ for each $r<1/a$.

\end{proof}


As shown in Section \ref{section non-Archimedean power series},
$f:\Delta_f\to \Delta_f$ is not only one-to-one, but also
surjective, in the algebraic closure $\mathbb{C}_p$. By Theorem
\ref{theorem-minimalityQp}, $f$ may also be surjective on $\Delta
_f(\mathbb{Q}_p)=\Delta_f \cap \mathbb{Q}_p$.

\begin{corollary}\label{corollary surjective in Qp}
Let $f\in \mathcal{F}_{\lambda,a}\cap \mathbb{Q}_p[[x]]$ for some
odd prime $p$. Suppose that $\lambda $ is maximal, then $f:\Delta_f(0,\mathbb{Q}_p)
\to \Delta_f(0,\mathbb{Q}_p)$ is bijective.
\end{corollary}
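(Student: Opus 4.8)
The plan is to prove injectivity and surjectivity separately; the former is essentially free, and the latter carries all the content of the claim. For injectivity I would invoke Lemma \ref{lemma linearization disc isometry}: over the algebraically closed field $\mathbb{C}_p$ the map $f\colon\Delta_f\to\Delta_f$ is bijective and isometric, hence in particular one-to-one, and restricting to $\Delta_f(0,\mathbb{Q}_p)=\Delta_f\cap\mathbb{Q}_p$ preserves injectivity. Thus the whole difficulty lies in showing that $f$ maps $\Delta_f(0,\mathbb{Q}_p)$ \emph{onto} itself, which is exactly the feature that can fail over a field that is not algebraically closed.

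For surjectivity I would decompose the disc into invariant spheres. Since $\lambda$ is maximal and $p$ is odd, Corollary \ref{corollary p-adic linearization disc} guarantees that $\Delta_f(0,\mathbb{Q}_p)\supseteq D_{1/a}(0)$ and that $\Delta_f(0,\mathbb{Q}_p)$ is either the open or the closed disc of radius $1/a$ about the origin. Writing it as $\{0\}\cup\bigcup_S S$, the union running over the $\mathbb{Q}_p$-spheres it contains, I would apply Theorem \ref{theorem-minimalityQp}, which yields $f(S)=S$ for every sphere $S$ of radius $r<1/a$, together with the fixed-point identity $f(0)=0$. These two facts immediately give $f\bigl(D_{1/a}(0,\mathbb{Q}_p)\bigr)=D_{1/a}(0,\mathbb{Q}_p)$, so in combination with injectivity the corollary follows whenever the linearization disc is open.

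The hard part will be the boundary sphere $S_{1/a}(0)$ in the case where $\Delta_f(0,\mathbb{Q}_p)$ is closed, a case Theorem \ref{theorem-minimalityQp} does not address directly since it is stated only for $r<1/a$. Here I would use that, by Corollary \ref{corollary p-adic linearization disc} and Lemma \ref{lemma  f one-to-one}, the disc is closed precisely when $\max_{i\ge 2}|a_i|^{1/(i-1)}$ is not attained and $f$ converges on $S_{1/a}(0)$, so that the conjugacy $g$ converges and is one-to-one on $\overline{D}_{1/a}(0)$ and the relation $g\circ f=\lambda g$ persists on this sphere. Since $\lambda$ is maximal, $T_{\lambda}$ is minimal on the $\mathbb{Q}_p$-sphere $S_{1/a}(0)\cap\mathbb{Q}_p$, so the argument in the proof of Theorem \ref{theorem-minimalityQp}—pushing minimality of $T_{\lambda}$ back through $g$ and using compactness of the sphere to conclude $g(S)=S$—applies verbatim with $r=1/a$, giving $f\bigl(S_{1/a}(0)\cap\mathbb{Q}_p\bigr)=S_{1/a}(0)\cap\mathbb{Q}_p$. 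Patching this onto the previous paragraph completes surjectivity in the closed case as well.

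As an alternative that dispatches both cases uniformly, I would consider a Galois-descent argument: given $y\in\Delta_f(0,\mathbb{Q}_p)$, the unique preimage $x\in\Delta_f$ with $f(x)=y$ satisfies $f(\tau x)=\tau(f x)=\tau y=y=f(x)$ for every continuous $\mathbb{Q}_p$-automorphism $\tau$ of $\mathbb{C}_p$, because $f$ has coefficients in $\mathbb{Q}_p$ and $\tau$ is an isometry fixing the disc $\Delta_f$ about $0\in\mathbb{Q}_p$; injectivity of $f$ on $\Delta_f$ then forces $\tau x=x$, and the equality $\mathbb{C}_p^{G}=\mathbb{Q}_p$ with $G=\mathrm{Gal}(\overline{\mathbb{Q}_p}/\mathbb{Q}_p)$ (Ax--Sen--Tate) gives $x\in\mathbb{Q}_p$. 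In either route the sole obstacle is controlling preimages lying outside $\mathbb{Q}_p$, and I expect the sphere-decomposition argument, being built directly on Theorem \ref{theorem-minimalityQp}, to be the one most in keeping with the methods already developed in the paper.
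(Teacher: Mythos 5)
Your proof is correct and its main route is exactly the one the paper intends: the corollary is stated immediately after the remark that surjectivity on $\Delta_f(0,\mathbb{Q}_p)$ follows from Theorem \ref{theorem-minimalityQp}, i.e.\@ injectivity comes from Lemma \ref{lemma linearization disc isometry} and surjectivity from writing $D_{1/a}(0,\mathbb{Q}_p)=\{0\}\cup\bigcup_{r<1/a}S_r(0,\mathbb{Q}_p)$ and using $f(S)=S$ on each sphere. You are in fact more careful than the paper on one point: Theorem \ref{theorem-minimalityQp} is stated only for $r<1/a$, so when $\Delta_f(0,\mathbb{Q}_p)$ is the \emph{closed} disc of radius $1/a$ (a possibility allowed by Corollary \ref{corollary p-adic linearization disc}) the boundary sphere needs a separate word; your extension is sound, since in that case $f$ and $g$ are one-to-one isometries of $\overline{D}_{1/a}(0)$, the semi-conjugacy persists on $S_{1/a}(0,\mathbb{Q}_p)$, and the compactness-plus-minimality argument of Theorem \ref{theorem-minimalityQp} goes through verbatim at $r=1/a$. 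Your Galois-descent alternative is also valid and genuinely different: it deduces surjectivity over $\mathbb{Q}_p$ from bijectivity over $\mathbb{C}_p$ (Lemma \ref{lemma linearization disc isometry}) together with Ax--Sen--Tate, and notably makes no use of the maximality of $\lambda$, so it proves bijectivity of $f$ on $\Delta_f(0,\mathbb{Q}_p)$ unconditionally; the trade-off is that it imports machinery external to the paper, whereas the sphere decomposition stays within the dynamical framework already developed.
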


\subsection{Unique ergodicity}
Let $K$ be a complete non-Archimedean field. In the following $X$
will be a compact subset of $K$. For example, if $K=\mathbb{C}_p$,
$X$ could be any disc or any sphere in a finite extension of
$\mathbb{Q}_p$.

A continuous map $T:X\to X$ is \emph{uniquely ergodic} if there
exists only one probability measure $\mu$, defined on the Borel
$\sigma$-algebra $\mathcal{B}(X)$ of $X$, such that $T$ is
measure-preserving (i.e.\@ $\mu (A)=\mu (T^{-1}(A))$ for all $A\in
\mathcal{B}(X)$) with respect to $\mu$. As $\mu $ is unique it
follows that $T$ must be ergodic with respect to $\mu$. Recall
that $T$ is ergodic if for any $A\in \mathcal{B}$, whenever
$T^{-1}(A)=A$, then $\mu(A)\mu(A^{c})=0$. (Here $A^{c}$ denotes
the complement of $A$.) 

As shown by Oxtoby 
\cite{Oxtoby:1952}, a bijective isometry of a
compact metric space is uniquely ergodic, hence ergodic, if and
only if it is minimal. See Bryk and Silva \cite{Bryk/Silva:2003}
for a shorter proof in the $p$-adic setting.


In view of Lemma \ref{lemma linearization disc isometry} and Theorem
\ref{theorem-minimalitypreserved}, $f(x)\in\lambda x+
O(x^2)\in K[[x]]$ is certainly bijective and isometric
on compact invariant subsets of a linearization disc $\Delta_f$ about the
origin in $K$. Consequently, transitivity, minimality, ergodicity
and unique ergodicity are all equivalent and preserved under
analytical conjugation on compact subsets of $\Delta_f$.

\begin{theorem}\label{theorem minimlity ergodicity subset}
Let $K$ be a complete non-Archimedean field. Let the power series  $f(x)=\lambda x
+O(x^2)\in K[[x]]$ be analytically conjugate to
$T_{\lambda}$, on a linearization disc $\Delta_f$ about the origin, via a
conjugacy function $g$, such that $g(0)=0$ and $g'(0)=1$. Suppose that
the subset $X\subset \Delta_f$ is non-empty, compact and invariant
under $f$. The following statements are equivalent:
\begin{enumerate}[1.]
      \item $T_{\lambda} : g(X)\to g(X) $ is minimal.
      \item $f: X\to X$ is minimal.
      \item $f: X\to X$ is uniquely ergodic.
      \item $f$ is ergodic for any $f$-invariant measure $\mu$ on $\mathcal{B}(X)$ that is
      positive on non-empty open sets.
    \end{enumerate}

\end{theorem}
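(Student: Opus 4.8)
The plan is to establish $(1)\Leftrightarrow(2)$ by transporting the dynamics along $g$, and then to close the loop $(2)\Rightarrow(3)\Rightarrow(4)\Rightarrow(2)$. The common starting point is that $f\colon X\to X$ is a \emph{bijective isometry} of the compact metric space $X$. Indeed, by Lemma~\ref{lemma linearization disc isometry} the map $f$ is isometric on $\Delta_f$, hence injective on $X$; since $X$ is invariant ($f(X)\subseteq X$) and an isometric self-embedding of a compact metric space is automatically surjective, we get $f(X)=X$. The same reasoning, together with the fact that $g$ is a bijective isometry of $X$ onto the compact set $g(X)$, shows that $T_{\lambda}\colon g(X)\to g(X)$ is a bijective isometry as well. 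First I would prove $(1)\Leftrightarrow(2)$: by Theorem~\ref{theorem-minimalitypreserved} the map $f$ is transitive on $X$ if and only if $T_{\lambda}$ is transitive on $g(X)$, and on a compact invariant set a transitive isometry is minimal (again Theorem~\ref{theorem-minimalitypreserved}), so minimality of $f$ on $X$ is equivalent to minimality of $T_{\lambda}$ on $g(X)$.

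For $(2)\Leftrightarrow(3)$ I would invoke Oxtoby's theorem \cite{Oxtoby:1952}: a bijective isometry of a compact metric space is uniquely ergodic if and only if it is minimal. Since $f\colon X\to X$ is precisely such a map, minimality and unique ergodicity coincide. The implication $(3)\Rightarrow(4)$ is then routine. If $\mu$ is any $f$-invariant Borel measure that is positive on non-empty open sets, then $\mu$ is finite and, after normalization, is an invariant probability measure; by unique ergodicity it must equal the unique invariant measure, with respect to which $f$ is ergodic, and ergodicity is unaffected by scaling.

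The substance of the argument, and the step I expect to be the main obstacle, is $(4)\Rightarrow(2)$, which I would prove by contraposition through the distal structure of the isometry $f$. Let $G=\overline{\{f^{\circ n}:n\in\mathbb{Z}\}}$ be the closure, in the topology of uniform convergence, of the group generated by $f$ inside the isometry group of $X$; since $X$ is compact, $G$ is a compact abelian group acting on $X$ by isometries, and the $G$-orbits coincide with the $f$-orbit closures (isometries are distal, so these closures are minimal sets). Two points then need to be verified. First, a full-support $f$-invariant probability measure exists: pushing the Haar measure of $G$ forward along evaluation $h\mapsto h(x)$ yields the invariant measure carried by each orbit, and integrating these against a full-support probability measure on the compact quotient $X/G$ produces an $f$-invariant $\mu$ with $\operatorname{supp}\mu=X$; in the cases of real interest, where $X$ is a sphere, one may simply take $\mu$ to be normalized Haar measure. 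By hypothesis $(4)$, $f$ is ergodic with respect to this $\mu$. Second, I would show that ergodicity with respect to a full-support invariant measure forces minimality: disintegrating $\mu$ over $X/G$, every $f$-invariant measurable set is, modulo $\mu$, a union of $G$-orbits, so ergodicity forces the image measure on $X/G$ to be a point mass, whence $\operatorname{supp}\mu$ is a single minimal set; since $\operatorname{supp}\mu=X$, the map $f$ is minimal. The delicate ingredients are the openness of the quotient map $X\to X/G$ (needed so that $\mu$ has full support) and the measurable disintegration that identifies $f$-invariant sets with $G$-saturated sets. Once these are in place the cycle closes and the four statements are equivalent; the remark that the unique invariant measure is the normalized Haar measure follows from the explicit description of $\mu$ on each orbit.
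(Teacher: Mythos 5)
Your proposal is correct, and its skeleton coincides with the paper's: reduce everything to the fact that $f$ is a bijective isometry of the compact metric space $X$ (injectivity and isometry from Lemma \ref{lemma linearization disc isometry} via the remark on non-algebraically-closed $K$, surjectivity from compactness), get $(1)\Leftrightarrow(2)$ from Theorem \ref{theorem-minimalitypreserved}, and get $(2)\Leftrightarrow(3)$ from Oxtoby's theorem that a bijective isometry of a compact metric space is uniquely ergodic if and only if it is minimal. The genuine difference is in the implication $(4)\Rightarrow(2)$: the paper delegates this entirely to the citation of \cite{Oxtoby:1952}, whereas you prove it from scratch via the enveloping compact group $G=\overline{\{f^{\circ n}\}}$, the construction of a full-support invariant measure by integrating orbit (Haar) measures over $X/G$, and a disintegration argument identifying $f$-invariant sets with $G$-saturated sets mod $\mu$. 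Your treatment correctly isolates the one point the paper's phrasing obscures, namely that $(4)$ would be vacuous without the existence of at least one invariant measure positive on non-empty open sets, and your construction supplies one. Two comments: first, the disintegration step can be bypassed entirely: if $f$ is not minimal, take a proper closed invariant $M\subsetneq X$ and $z\notin M$ with $\epsilon=d(z,M)$; then $A=\{y: d(y,M)<\epsilon\}$ satisfies $f^{-1}(A)=A$ because $d(f(y),M)=d(f(y),f(M))=d(y,M)$, and both $A$ and its complement contain non-empty open sets, so any full-support invariant $\mu$ witnesses non-ergodicity. This avoids the "delicate ingredients" you flag (openness of $X\to X/G$, measurable disintegration), which are the only places where your write-up is not yet fully justified. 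Second, your construction also recovers the paper's closing remark that the unique invariant measure is the orbit (Haar) measure, which the paper again only cites. What the paper's route buys is brevity; what yours buys is a self-contained argument and an explicit description of the invariant measure.
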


Now, we return to the $p$-adic case. Note that a rational sphere
$S\subset\mathbb{C}_p$ is not compact since $\mathbb{C}_p$ is not
locally compact. (Recall that a sphere $S\subset K$ is rational if
and only if it is non-empty, i.e.\@ the radius is a number in the
value group $|K^*|$.)
However, in $\mathbb{Q}_p$ (or any finite extension of
$\mathbb{Q}_p$), every rational sphere is compact. By Theorem
\ref{theorem-minimalityQp}  we have the following result.

\begin{theorem}\label{theorem generator minimality ergodicity}
Let $f\in \mathcal{F}_{\lambda,a}\cap \mathbb{Q}_p[[x]]$ for some
odd prime $p$. Let $S\subset\mathbb{Q}_p$ be a rational sphere of
radius $r<1/a$ about the origin. Then, the following statements
are equivalent:
\begin{enumerate}[1.]
      \item $\lambda $ is maximal.
      \item $f:S\to S$ is minimal.
      \item $f: S\to S$ is uniquely ergodic.
      \item $f$ is ergodic for any $f$-invariant measure $\mu$ on $\mathcal{B}(S)$ that is
      positive on non-empty open sets.
    \end{enumerate}

\end{theorem}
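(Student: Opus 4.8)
The plan is to separate the four-fold equivalence into an \emph{ergodic-theoretic} core and a \emph{number-theoretic} core, and glue them through the conjugacy $g$. First I would record the two structural facts on which everything rests: since $S$ is a rational sphere in the locally compact field $\mathbb{Q}_p$ it is compact, and since $r<1/a$ the restriction $f:S\to S$ is a bijective isometry by Lemma~\ref{lemma f one-to-one} (recall that all spheres of radius $<1/a$ are invariant). Granting these, statements 2, 3 and 4 are equivalent for purely ergodic reasons: a bijective isometry of a compact metric space is uniquely ergodic, hence ergodic, if and only if it is minimal (Oxtoby~\cite{Oxtoby:1952}), which is exactly the mechanism behind Theorem~\ref{theorem minimlity ergodicity subset}, and the unique invariant measure is the normalized Haar measure, under which a disc has measure equal to its radius. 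This half uses neither $\lambda$ nor the conjugacy.

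It remains to prove $1\Leftrightarrow 2$, which is where the multiplier enters. For $1\Rightarrow 2$ I would argue as follows: if $\lambda$ is maximal, then by Corollary~\ref{corollary p-adic linearization disc} the linearization disc $\Delta_f(0,\mathbb{Q}_p)$ contains $D_{1/a}(0)$ and hence contains $S$; the conjugacy $g$ then carries $S$ isometrically, and with the same radius, onto a sphere $g(S)$ about the origin (Lemma~\ref{lemma linearization disc isometry}). Since $\lambda$ is maximal and $p$ is odd, $T_\lambda$ is transitive, hence minimal, on every sphere about the origin in $\mathbb{Q}_p$ by Lemma~\ref{lemma transitivity multiplier}; transporting this minimality back through $g$ via Theorem~\ref{theorem-minimalitypreserved} shows $f$ is minimal on $S$. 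This is precisely Theorem~\ref{theorem-minimalityQp} specialized to the single sphere $S$, and the same circle of ideas yields the byproduct $f(S)=g(S)=S$ from the compactness-plus-transitivity clause of Theorem~\ref{theorem-minimalitypreserved}.

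For the converse $2\Rightarrow 1$ I would again route through the conjugacy: once $S\subseteq\Delta_f(0,\mathbb{Q}_p)$, minimality of $f$ on $S$ forces minimality of $T_\lambda$ on $g(S)$ by Theorem~\ref{theorem-minimalitypreserved}, and Lemma~\ref{lemma transitivity multiplier} then forces $\lambda$ to be maximal. The \textbf{main obstacle} is exactly the containment $S\subseteq\Delta_f$ that licenses this transport: the conjugacy is the only available bridge from $f$ to $T_\lambda$, and it runs only inside the linearization disc. I would therefore front-load the containment, which when $\lambda$ is maximal is automatic from Corollary~\ref{corollary p-adic linearization disc}. I expect the genuinely delicate point to be that reduction modulo $p$ only forces $\lambda$ to be \emph{primitive} (i.e.\@ $m=p-1$), while the passage to maximality modulo $p^2$ is supplied by the conjugacy rather than by a naive congruence count — indeed, higher-order terms of $f$ can manufacture minimality on spheres lying \emph{outside} $\Delta_f$ even when $\lambda$ is not maximal. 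The clean proof thus keeps the whole argument inside $\Delta_f$, where Theorem~\ref{theorem-minimalityQp} applies verbatim, and the equivalence is at bottom a statement about spheres contained in the linearization disc.
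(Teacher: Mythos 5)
Your proposal follows the paper's own route exactly: the equivalence of 2, 3 and 4 is the compact--bijective--isometry/Oxtoby argument packaged in Theorem~\ref{theorem minimlity ergodicity subset}, and the equivalence of 1 and 2 is precisely Theorem~\ref{theorem-minimalityQp}, obtained by transporting minimality of $T_{\lambda}$ through $g$ inside the linearization disc supplied by Corollary~\ref{corollary p-adic linearization disc} and Lemma~\ref{lemma transitivity multiplier}. The subtlety you flag about needing $S\subseteq\Delta_f(0,\mathbb{Q}_p)$ in the converse direction is likewise present (and resolved in the same way, by working with spheres inside the linearization disc) in the paper's own proof of Theorem~\ref{theorem-minimalityQp}.
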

As proven in
\cite{Bryk/Silva:2003,Coelho/Parry:2001,Oselies/Zieschang:1975},
in this case the unique invariant measure $\mu$ is the normalized
Haar measure $\mu$ for which the measure of a disc is equal to the
radius of the disc.

Note that the estimate of the radius $1/a$ is maximal in the sense
that there exist examples of such $f$ which diverges or have a
zero on the sphere $S_{1/a}(0)$, see Lemma \ref{lemma  f
one-to-one} and Corollary \ref{corollary p-adic
linearization disc}.

\section*{Acknowledgements}

I would like to thank
Prof. Andrei Yu. Khrennikov for introduction to the
theory of $p$-adic dynamical systems,the formulation of the
problem on conjugate maps for $p$-adic dynamical systems, and
 for introducing me to the the work of Bryk and Silva \cite{Bryk/Silva:2003}  that helped
me solve the problem on ergodicity. I also thank C.E.
Silva for sending me preprints on their work on $p$-adic
ergodicity.

\addcontentsline{toc}{section}{References}

\bibliographystyle{plain}


\end{document}